\setlist[itemize]{leftmargin=5.5mm}
\newtheorem{theorem}{Theorem} [section]
\newtheorem{lemma}[theorem]{Lemma}
\newtheorem{proposition}[theorem]{Proposition}
\newtheorem{remark}[theorem]{Remark}
\newtheorem{definition}[theorem]{Definition}
\newcommand{\I}{\hspace{0.5mm}\text{I}\hspace{0.5mm}}
\newcommand{\II}{\text{I \hspace{-2.8mm} I} }
\newcommand{\III}{\text{I \hspace{-2.9mm} I \hspace{-2.9mm} I}}
\newcommand{\dom}{\textup{dom} }
\newcommand{\noi}{\noindent}
\newcommand{\R}{\mathbb{R}}
\newcommand{\NN}{\mathcal{N}}
\newcommand{\bul}{\bullet}
\newcommand{\E}{\mathbb{E}}
\newcommand{\Cov}{\textup{Cov}}
\newcommand{\dl}{\delta}
\newcommand{\nb}{\nabla}
\newcommand{\bx}{\pmb{x}}
\newcommand{\s}{\sigma}
\newcommand{\cF}{{\mathcal{F}}}
\newcommand{\cB}{{\mathcal{B}}}
\newcommand{\wt}{\widetilde}
\newcommand{\vast}{\bBigg@{3.5}}
\newcommand{\Vast}{\bBigg@{5}}
\renewcommand{\(}{\left(}
\renewcommand{\)}{\right)}
\renewcommand{\[}{\left[}
\renewcommand{\]}{\right]}
\renewcommand{\{}{\left\lbrace}
\renewcommand{\}}{\right\rbrace}
\renewcommand{\o}{\omega}
\renewcommand{\O}{\Omega}
\newcommand{\jb}[1]
{\langle #1 \rangle}
\renewcommand{\abs}[1]
{\left| #1 \right|}
\newcommand{\ind}{\mathbf 1}
\renewcommand{\P}{\mathbb{P}}
\newcommand{\N}{\mathbb{N}}
\newcommand{\fH}{\mathfrak{H}}
\newcommand{\Var}{\textup{Var}}
\numberwithin{equation}{section}
\numberwithin{theorem}{section}
\begin{document}
\baselineskip = 14pt

\title[Functional 2nd-order Gaussian Poincar\'e inequalities]
{Functional second-order \\ Gaussian Poincar\'e inequalities}

 \author[A. Vidotto and G. Zheng]
{Anna Vidotto and Guangqu Zheng}

\address{
Anna Vidotto,
Dipartimento di Scienze di Base e Applicate per l'Ingegneria (SBAI)\\
Sapienza Universit\`a Di Roma\\
Via Antonio Scarpa 10\\
00161 Roma (RM), Italy
}

\email{anna.vidotto@uniroma1.it}

\address{
Guangqu Zheng, 
Department of Mathematics and Statistics\\
Boston University\\
665 Commonwealth Avenue\\ 
Boston, MA02215\\
USA 
 }

\email{gzheng90@bu.edu}

\subjclass[2020]{35Q35, 60F15, 60H30}

\keywords{Malliavin calculus;
Stein's method;
second-order Gaussian Poincar\'e inequality;
functional central limit theorem;
Breuer-Major theorem;
shallow neural networks;
spatial statistics of SPDEs.}

\begin{abstract} 

In this paper, we work in the framework of Hilbert-valued
Wiener structures and derive a functional version of 
the second-order Gaussian Poincar\'e inequality that
leads to abstract bounds for Gaussian process approximation 
in $d_2$ distance. 
Our abstract bounds are flexible
and can be applied in various examples including 
 functional Breuer-Major   
central limit theorems, shallow neural networks,
and spatial statistics of SPDE solutions. 

\end{abstract}

\date{\today}
\maketitle
%


\baselineskip = 14pt

\section{Introduction}\label{SEC1}

This paper is devoted to the study of quantitative functional limit
theorems for Hilbert-valued   random variables.
Let us first motivate the study by focusing on the one-dimensional case. 
The classical central limit  theorem (CLT) asserts that 
for a sequence of real-valued, identically, 
and independently distributed random variables 
$\{X_i: i\geq 1\}$ with mean zero and variance one, 
$V_n:= \frac{1}{\sqrt{n}} (X_1 + ... + X_n)$ converges in law 
to   $\NN(0,1)$ as $n\to+\infty$.
Under the additional assumption $\E[ |X_1|^3] <\infty$,
the following quantitative central limit theorem, known as
the Berry-Esseen bound, holds true:
with $Z\sim\NN(0,1)$,
\begin{align}\label{Kol}
d_{\rm Kol}( V_n,  Z ) := \sup_{t\in\R}  |\P(V_n \leq t) - \P(Z\leq t)|
\leq  \frac{C\,\E[ |X_1|^3]}{\sqrt{n}},
\end{align}
where $d_{\rm Kol}$ is the Kolmogorov distance
and $C > 0$ is an absolute constant; see, e.g., 
\cite[Section 3.7]{NP12}. 
The above  random variable $V_n$  is a linear statistic
over independent random variables, however such limit theorems
have been developed very well both for the cases  
when the random variables $X_i$ are (at least weakly) dependent 
and when one considers highly nonlinear 
statistics over independent random variables
(e.g.,  multi-linear polynomials in i.i.d$.$ 
standard normal random variables). 
A powerful tool for establishing quantitative central limit theorems
is Stein's method (e.g., \cite[Section 1]{Cha08} and \cite{CGS}).
Moreover, when the non-linear statistics are constructed from a Gaussian process, 
the obtention of limit theorems is possible by combining
Stein's method, Malliavin calculus, and Gaussian analysis,
as developed in \cite{NP09} and later in the monograph \cite{NP12}.
Relevant to our work in this paper, 
one of the  important  developments in Stein's method is 
the second-order Gaussian Poincar\'e inequality that was first proposed 
by S.~Chatterjee \cite{Cha09}.
Consider the nonlinear statistic $F = g(G_1, ... ,G_n)$ in i.i.d. $\NN(0,1)$
random variables with the (deterministic) function $g:\R^n\to \R$
twice differentiable, the Gaussian Poincar\'e inequality assets
that the variance of $F$ is controlled by 
$\E [ \| \nabla  g(G_1, ... ,G_n)\|^2_{\R^n}  ]$,
that is, when $\nabla  g(G_1, ... ,G_n)$ is ``typically'' small,
the fluctuation of $F$ is small (first-order result).
Chatterjee investigated a possible second-order result in his work
\cite{Cha09}. More precisely, when the operator norm
of the  Hessian matrix 
$\nabla^2  g(G_1, ... ,G_n)$ is typically small compared to the variance of $F$,
$F$ is close to a normal distribution with matched mean and variance,
with the proximity measured in the total-variation distance. Soon later, 
the paper \cite{NPR09} by
Nourdin, Peccati, and Reinert 
generalized Chatterjee's result
to the case where $F$ may depend on infinitely many 
Gaussian random variables or $F$ is a general functional
over a Gaussian process. In a paper \cite{Vid20} by the first author, 
an improved version of the second-order Gaussian 
 Poincar\'e inequality  has been derived and often 
 it yields optimal rate of convergence in applications;
  see \cite[Sections 4.1 and 4.2]{Vid20}.
Given the flexibility of such a tool, we have seen applications of the second-order Gaussian 
Poincar\'e inequality and its variants in the context of 
random matrix theory \cite{Cha09, Vid20}, and, more recently, in 
Gaussian neural networks with a fixed input  \cite{BFF24} 
and  spatial statistics arising from  stochastic partial differential equations \cite{BNQSZ,NXZ22}, 
to name a few.

 The goal of the present paper is to extend such inequalities 
 to random variables taking values in Hilbert spaces.
We closely follow the framework developed in
  \cite{BC20} by Bourguin and Campese.
The authors of    \cite{BC20}  
 combined   an infinite-dimensional version of Stein's method 
\cite{Sh11} and the so-called Gamma calculus
to derive  various Malliavin-Stein bounds for functional approximation
in the $d_2$ metric. Note that the approach to Stein's method
for Hilbert-valued random variables is different from the one 
in the seminal paper \cite{adb90} by A.D. Barbour;
see discussions in \cite[Section 1.1]{BRZ24}.
In the current article, we aim at refining the aforementioned  Malliavin-Stein bounds 
in the spirit of \cite{Vid20}, i.e., by adapting 
the second-order Gaussian Poincar\'e inequalities
to the Hilbert-valued Wiener structures.
Our main abstract bounds are stated in Theorem \ref{thm_imp1} and 
Theorem \ref{thm_imp2}.
To illustrate these bounds, we provide three applications with different contexts
in Theorem \ref{thm_app1} (functional Breuer-Major CLT),
Theorem \ref{thm_NN} (shallow neural networks),
and Theorem \ref{APP_SPDE} (stochastic partial differential  equations).

 \smallskip

\noi
{$\bul$ \bf Structure of this paper.}
Section \ref{SEC2} is devoted to preliminaries:
in Section \ref{SEC21}, we set up our framework,
notably the Hilbert-valued Wiener structure from   
\cite{BC20}; in Section \ref{SEC22}, 
we take a pedestrian approach to 
 define various Malliavin operators 
for Hilbert-valued random variables by using chaos expansion;
in Section \ref{SEC2_3}, we state the abstract Malliavin-Stein bounds,
together with random contraction inequality in the Hilbert setting. 
With these preliminaries,
we present our main abstract bounds together with     applications
 in Section \ref{SEC3}.

\section{Malliavin-Stein bounds for the Hilbert-valued Wiener structure}\label{SEC2}

\subsection{Hilbert-valued  Wiener structures}  \label{SEC21}

Let  $\big(K, \langle \, \cdot , \, \cdot \rangle_K \big)$ 
be a  real separable Hilbert space
equipped with the Borel $\s$-algebra $\cB(K)$,
and let $(\Omega, \cF, \P)$ denote the probability space,
on which all the random objects in this paper are defined. 
A $K$-valued random variable $X$ is simply a measurable 
function from $\Omega$ to $K$, i.e.,
$X^{-1}(B) \in \cF$ 
for any $B\in \cB(K)$.
It can be characterized by  
the real-valued random variables 
$\{ \langle X, v \rangle_K : v\in K \}$.
In particular, fixing an orthonormal basis $\{ k_i: i\in\N_{\geq 1} \}$
of $K$,\footnote{The choice of the orthonormal basis is immaterial, 
for example, the definition of trace \eqref{tr} does not depend
on such a choice. We will not emphasize this point throughout 
this paper. }
 we can always write

\noi  
\begin{align}\label{decomp1}
X = \sum_{i=1}^\infty  X_i  k_i
\quad
\text{with  $X_i : = \langle X, k_i \rangle_K$}.
\end{align}

\begin{definition} \label{def1}
Let $K$ be a real separable Hilbert space. We say $X$ is a Gaussian random variable on $K$ if 
$\langle X, k \rangle_K$ is Gaussian for any $k\in K$.
We say $X$ is centered,
 if $\langle X, k \rangle_K$ is centered for any $k\in K$.
We say $X$ is non-degenerate, 
if $\langle X, k \rangle_K$ is not a constant for any  nonzero $k\in K$.\footnote{In other words, a constant is viewed 
as a degenerate Gaussian random variable (with zero variance).  
When the dimension of $K$ is finite, the non-degeneracy is equivalent to that the finite-dimensional
Gaussian vector has a non-singular covariance matrix.}

\end{definition}

For $p\in[1,\infty)$, 
we denote by $L^p(\Omega; K)$ the set of all $K$-valued 
random variables $X$ satisfying 
$$
\| X \|_{L^p(\O;K)} : = \big(  \E\big[ \| X \|_K^p \big] \big)^{\frac1p} < +\infty.
$$
The space $L^\infty(\O;K)$ consists of random variables $X$
with $\|X\|_K\in L^\infty(\O; \R)$.
And for $X\in L^1(\O;K)$, the expectation $\E[X]$,  interpreted  as 
the Bochner integral
$$
\E[X] := \int_\O X d\P \in K
$$
is well defined.
Next, we introduce the covariance operator $S_X$ of $X\in L^2(\O; K)$,
which is a {\it bounded linear operator} 
on $K$:

\noi
\begin{align}\label{SX1}
u\in K \longmapsto S_X u := \E\big[   \langle X, u \rangle_K X \big]. 
\end{align}

\noi
Using the decomposition \eqref{decomp1}, we have\footnote{Note that
the assumption $X\in L^2(\O; K)$, together with the 
property of Bochner integration, implies that
$\| S_X u\|_K \leq \E[ \|X\|_K^2] \|u\|_K$. 
It is also evident that 
$\E \sum_{i=1}^\infty X_i^2 <\infty$, 
which yields  the finiteness of the expectations in \eqref{SX2}
and interchange of sum and expectation therein.   } 

\noi
\begin{align}\label{SX2}
\langle S_X u, k_j\rangle_K 
= \E\bigg( \sum_{i=1}^\infty X_i u_i X_j \bigg) 
= \sum_{i=1}^\infty \E[ X_j X_i ] u_i , 
\end{align}

\noi
from which one can easily see that the covariance operator
is a generalization of the covariance matrix of a 
{\it centered}
random vector in a finite-dimensional Euclidean space. 
It is also clear from \eqref{SX1} that 

\noi
\begin{align}\label{tr}
\begin{aligned}
{\rm tr}(S_X) : = \sum_{i=1}^\infty \langle S_X k_i, k_i \rangle_K  
= \sum_{i=1}^\infty \E[ X_i^2 ] = \E[ \| X\|_K^2 ].
\end{aligned}
\end{align}
That is, $S_X$ is a trace-class operator on $K$.
Similarly, we have

\noi
\begin{align}\label{HS}
\begin{aligned}
\| S_X\|^2_{\rm HS} :& = \sum_{i=1}^\infty  \| S_X k_i \|^2_K  
= \sum_{i=1}^\infty  \| \E[ X_i X ] \|_K^2 \\
&\leq  \sum_{i=1}^\infty  \big( \E \big[ | X_i| \cdot \|X\|_K \big] \big)^2   \leq  \big( \E[ \|X\|_K^2 ] \big)^2=\tr(S_X)^2.
\end{aligned}
\end{align}
That is, $S_X$ is also an Hilbert-Schmidt operator on $K$ with 
the Hilbert-Schmidt norm $\| S_X\|_{\rm HS}$ bounded by the trace ${\rm tr}(S_X)$.
Then, we have 

\noi
\begin{align}\label{norm_bdd}
\| S_X \|_{\rm op} \leq \| S_X\|_{\rm HS} \leq {\rm tr}(S_X),
\end{align}

\noi
where $\| \cdot \|_{\rm op}$   denotes the usual operator norm 
of an     operator on $K$. 

In this paper, we focus on normal approximation of $K$-valued 
random variables subordinated to an isonormal Gaussian 
process $W =  \{ W(h): h\in\fH\}$, with $\fH$ another real separable 
Hilbert space.
\begin{itemize}

\item[(i)]  We say $W$ is an isonormal
Gaussian process over $\fH$ if $W$ is a {\it real} centered Gaussian family with covariance 
structure 
$$
\E\big[ W(h_1) W(h_2) \big] = \langle h_1, h_2 \rangle_\fH
$$ 
for any $h_1, h_2\in\fH$.

\item[(ii)] To quantify the proximity of distributions of two $K$-valued 
random variables $F$ and  $G$, we employ the $d_2$ metric defined by

\noi
\begin{align}\label{d2}
d_2(F, G) : = \sup_{\phi} \big|  \E[ \phi(F)] - \E[ \phi(G) ] \big|,
\end{align}
where the supremum runs over all functions $\phi: K\to \R$
that are twice Fr\'echet differentiable  with uniformly bounded first 
and second derivatives:
$$
\max_{j=1,2} \sup_{v\in K}  \| \nb^j \phi(v) \|_{K^{\otimes j}} \leq 1
$$
with $\nb^j$ denoting the $j$-th Fr\'echet derivative
and $K^{\otimes j}$ the   $j$-th tensor product   of $K$;
see Section 2.2.4 in \cite{BC20}.

\end{itemize}

%

Now let us introduce Hilbert-valued Malliavin calculus and then
state the abstract bound of Bourguin and Campese \cite{BC20},
from which we further derive the (improved)
second-order Gaussian Poincar\'e inequalities.

\subsection{Chaos expansions and Malliavin operators}  \label{SEC22}

It is well known that the $L^2(\P)$ space of real-valued 
random variables that are measurable with respect to 
$\s\{ W\}$ admits the Wiener-It\^o chaos decomposition:
for any $Y\in L^2(\O, \s\{W\}, \P ; \R)$, 
there exist unique kernels 
$g_n\in\fH^{\odot n}$ for $n\in\N_{\geq 1}$
such that 

\noi
\begin{align}\label{chaos1}
Y = \E[ Y] + \sum_{n\geq 1} I_n( g_n),
\end{align}

\noi
where $I_n$ denotes the $n$-th multiple integral
and $\fH^{\odot n}\subset \fH^{\otimes n}$ denotes
the $n$-th symmetric tensor product of $\fH$;
see \cite[Chapter 1]{Nua06}
and \cite[Chapter 2]{NP12}
for definitions and basic properties. 
To develop the Malliavin calculus 
in the Hilbert-valued setting, 
it suffices to apply the real theory
along  each direction $k_i$, in view
of the decomposition \eqref{decomp1}. 

\bigskip

For any $p\in[1,\infty)$, we put $L^p_\o K :=  L^p(\O, \s\{W\}, \P ; K)$.

\begin{lemma}\label{WID}
{\rm (i)} \textup{(Wiener chaos)}  Fix $n\in\N_{\geq 1}$.
Given  $f\in \fH^{\odot n}\otimes K$, 

\noi
\begin{align} \label{IN}
I_n(f) : = \sum_{i\geq 1} I_n(f_i) k_i \quad
\text{with}\quad f_i = \langle f, k_i\rangle_K
\end{align}
is well defined and also called the $n$-th multiple integral 
of $f$.\footnote{We use the same notation for the real setting 
and $K$-valued setting, which shall not cause any ambiguity
for readers. } 
For a {\rm(}deterministic{\rm)} vector $v\in K$, we put
$I_0(v) := v$.
And, the following orthogonality relation holds:
\begin{align}\label{OR}
\langle I_n(f) , I_m(g) \rangle_{L^2_\o K} = 
\ind_{\{ n=m\}} n! \jb{f, g}_{  \fH^{\otimes n}\otimes K  }
\end{align}
for any $m,n\in\N_{\geq 0}$ and for any $f\in \fH^{\odot n}\otimes K$
and $g\in \fH^{\odot m}\otimes K$.

\medskip

\noi
{\rm (ii)} \textup{(Chaos decomposition)}
For any $F\in L^2_\o K$, there exist 
unique kernels 
$f_n\in\fH^{\odot n}\otimes K$ for $n\in\N_{\geq 1}$
such that 

\noi
\begin{align}\label{chaos2}
F = \E[ F] +   \sum_{n\geq 1} I_n(f_n) , 
\end{align}

\noi
where the above series is convergence in $L^2_\o K$.
In particular, we have 

\noi
\begin{align}
 \| F - \E[F] \|^2_{L^2_\o K} =  \sum_{n\geq 1} n!   \| f_n \|^2_{\fH^{\otimes n} \otimes K}. 
 \label{VarF}
\end{align}

\end{lemma}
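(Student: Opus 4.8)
The plan is to lift the scalar Wiener--It\^o theory recalled around \eqref{chaos1} to the $K$-valued setting coordinate by coordinate along the fixed orthonormal basis $\{k_i\}$ of $K$, with every interchange of summations controlled by a single a priori square-summability bound.

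For part (i), given $f\in\fH^{\odot n}\otimes K$ I would first record that its coordinates $f_i:=\langle f,k_i\rangle_K$ lie in $\fH^{\odot n}$ and satisfy $\sum_{i\geq1}\|f_i\|_{\fH^{\otimes n}}^2=\|f\|_{\fH^{\otimes n}\otimes K}^2<\infty$. Combining this with the scalar isometry $\E[I_n(f_i)^2]=n!\,\|f_i\|_{\fH^{\otimes n}}^2$ and the orthonormality of the $k_i$ shows that the partial sums of \eqref{IN} are Cauchy in $L^2_\o K$, so $I_n(f)$ is well defined; the same computation also yields $\|I_n(f)\|_{L^2_\o K}^2=n!\,\|f\|_{\fH^{\otimes n}\otimes K}^2$. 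For the orthogonality relation \eqref{OR} I would expand $\langle I_n(f),I_m(g)\rangle_K=\sum_{i,j}I_n(f_i)I_m(g_j)\langle k_i,k_j\rangle_K=\sum_i I_n(f_i)I_m(g_i)$, take expectations (the interchange with $\sum_i$ being justified by Cauchy--Schwarz together with the summability just recorded), and apply the scalar orthogonality relation to each summand, obtaining $\ind_{\{n=m\}}\,n!\sum_i\langle f_i,g_i\rangle_{\fH^{\otimes n}}=\ind_{\{n=m\}}\,n!\,\langle f,g\rangle_{\fH^{\otimes n}\otimes K}$. The cases with $n$ or $m$ equal to $0$ are immediate from $I_0(v)=v$ and $\E[I_n(\cdot)]=0$ for $n\geq1$.

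For part (ii), I would set $F_j:=\langle F,k_j\rangle_K$, note $F_j\in L^2(\O,\s\{W\},\P;\R)$ since $\E[F_j^2]\leq\E[\|F\|_K^2]$, and apply the scalar decomposition \eqref{chaos1} to obtain unique $g^{(j)}_n\in\fH^{\odot n}$ with $F_j=\E[F_j]+\sum_{n\geq1}I_n(g^{(j)}_n)$ and $\sum_{n\geq1}n!\,\|g^{(j)}_n\|_{\fH^{\otimes n}}^2=\Var(F_j)$. Summing over $j$ and using Tonelli, $\sum_{n\geq1}n!\sum_{j\geq1}\|g^{(j)}_n\|_{\fH^{\otimes n}}^2=\sum_j\Var(F_j)\leq\sum_j\E[F_j^2]=\E[\|F\|_K^2]<\infty$, so the kernels $f_n:=\sum_{j\geq1}g^{(j)}_n\otimes k_j$ are well-defined elements of $\fH^{\odot n}\otimes K$ (symmetric in the $\fH$-slots because each $g^{(j)}_n$ is), with $\|f_n\|_{\fH^{\otimes n}\otimes K}^2=\sum_j\|g^{(j)}_n\|_{\fH^{\otimes n}}^2$, and by part (i) the series $\sum_{n\geq1}I_n(f_n)$ converges in $L^2_\o K$. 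To see that its sum is $F-\E[F]$ I would project onto each $k_j$: since $\langle I_n(f_n),k_j\rangle_K=I_n(g^{(j)}_n)$ by construction, $\langle\cdot,k_j\rangle_K$ is continuous and hence commutes with the $L^2_\o K$-convergent series, and $\langle\E[F],k_j\rangle_K=\E[F_j]$ by properties of the Bochner integral, the $j$-th coordinate of $\E[F]+\sum_n I_n(f_n)$ equals $\E[F_j]+\sum_n I_n(g^{(j)}_n)=F_j$ for every $j$, whence equality in $K$ almost surely since $\{k_j\}$ is a basis. Uniqueness is then a one-line consequence of \eqref{OR} (if $\sum_n I_n(f_n-\wt f_n)=0$ then $\sum_n n!\|f_n-\wt f_n\|_{\fH^{\otimes n}\otimes K}^2=0$), and \eqref{VarF} is immediate from the orthogonality relation applied to \eqref{chaos2}.

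\textbf{Main obstacle.} I do not expect a genuine obstacle: the whole argument is a bookkeeping lift of the scalar theory, and every interchange of sums, expectations, and inner products is underwritten by the single estimate $\sum_{n\geq1}n!\,\|f_n\|_{\fH^{\otimes n}\otimes K}^2\leq\E[\|F\|_K^2]$. The only points warranting care are the verification that the reassembled kernels $f_n$ really lie in the symmetric tensor space $\fH^{\odot n}\otimes K$ (which uses that $\fH^{\odot n}\otimes K$ is closed in $\fH^{\otimes n}\otimes K$ and that, for each fixed $n$, $\sum_j\|g^{(j)}_n\|_{\fH^{\otimes n}}^2<\infty$) and that the double sum over the pair $(n,j)$ may be reordered, both handled by Tonelli.
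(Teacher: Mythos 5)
Your proposal is correct and follows essentially the same route as the paper: lift the scalar Wiener--It\^o theory coordinatewise along the basis $\{k_i\}$, control all interchanges by the single square-summability estimate, and reassemble the kernels as $f_n=\sum_j g_n^{(j)}\otimes k_j$. The only cosmetic difference is that you identify the sum of the series with $F$ by projecting onto each $k_j$, whereas the paper rearranges the partial sums $\sum_{i=1}^M F_i k_i$ directly; both rest on the same bound, and your explicit treatment of uniqueness and of \eqref{VarF} fills in details the paper leaves implicit.
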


\begin{proof}
(i) Suppose $f\in    \fH^{\odot n}\otimes K$.
Then, for any $i\in\N_{\geq 1}$,
$f_i=\jb{f, k_i}_K\in\fH^{\odot n}$
with 
$$
\| f\|^2_{\fH^{\otimes n}\otimes K}
= \sum_{i\geq 1} \| f_i\|^2_{\fH^{\otimes n}}.
$$
Thus, 
$I_n(f_i) $ is well defined with
$\E\big[ |I_n(f_i) |^2 \big] = n! \| f_i \|^2_{\fH^{\otimes n}}$
and 
$$
\bigg\| \sum_{i\geq 1} I_n(f_i) k_i \bigg\|^2_{L^2_\o K}
= \sum_{i\geq 1} \E\big[ |I_n(f_i) |^2 \big] = n! \| f\|^2_{\fH^{\otimes n}\otimes K}.
$$
That is, the definition \eqref{IN} makes sense and the orthogonality 
relation holds for $f= g \in  \fH^{\odot n}\otimes K$.
Similarly, for any $f\in \fH^{\odot n}\otimes K$
and $g\in \fH^{\odot m}\otimes K$, 
we deduce from Parseval's identity and  orthogonality 
relation in the real setting (see, e.g.,
 \cite[Proposition 2.7.5]{NP12})
that 

\noi
\begin{align*}
\langle I_n(f) , I_m(g) \rangle_{L^2_\o K} 
&=  \sum_{i\geq 1} \E\big[ I_n(f_i)  I_m(g_i) \big] \\
&=   \ind_{\{ n=m\}} n! \sum_{i\geq 1} \jb{f_i, g_i}_{  \fH^{\otimes n} } 
= \ind_{\{ n=m\}} n! \jb{f, g}_{  \fH^{\otimes n}\otimes K  }.
\end{align*}
That is, the orthogonality relation \eqref{OR} is established.

Next, we  prove the chaos expansion. 

\medskip
\noi
(ii) Let $F\in L^2_\o K$. We can first write $F  = \sum_{i\geq 1} F_i k_i$ with 
$F_i = \jb{F, k_i}_K\in L^2_\o \R$. 
By the chaos expansion \eqref{chaos1} in the real setting, 
we can find unique kernels $f_{n, i} \in \fH^{\odot n}$, $n\geq 1$,
such that 
$$
F_i =  \E[ F_i ]  + \sum_{n\geq 1} I_n( f_{n, i} ). 
$$
In particular, we have 
\begin{align} \label{T0}
\sum_{i, n\geq 1} \| f_{n, i} \|^2_{\fH^{\otimes n}} <\infty.
\end{align}

\noi
On one hand, it is clear that $\sum_{i=1}^M F_i k_i$ converges in $L^2_\o K$ to $F$
as $M\to+\infty$. On the other hand, we have 

\noi
\begin{align}
\sum_{i=1}^M F_i k_i
&= \sum_{i=1}^M   \bigg(   \E[ F_i ]  + \sum_{n\geq 1} I_n( f_{n, i} ) \bigg)  k_i  \notag \\
&=  \bigg(  \sum_{i=1}^M  \E[ F_i ]  k_i \bigg) 
 +    \bigg(   \sum_{n\geq 1}  \sum_{i=1}^M  I_n( f_{n, i} )  k_i  \bigg), \label{T1}
\end{align}

\noi
where the first term  in \eqref{T1}  converges to $\E[F]$, and 
$ \sum_{i=1}^M  I_n( f_{n, i} )  k_i  = I_n\big(   \sum_{i=1}^M  f_{n, i} \otimes   k_i \big)  $
converges in $L^2_\o K$ to $ I_n (   f_n\big) $ with
$$
f_n =  \sum_{i=1}^\infty  f_{n, i} \otimes   k_i   \in\fH^{\odot n}\otimes K.
$$
Note that the membership of $f_n$ in $\fH^{\odot n}\otimes K$ follows from \eqref{T0}.
Therefore, we deduce from the orthogonality relation in (i) that 
the second term in \eqref{T1} converges in 
$L^2_\o K$ to $\sum_{n\geq 1} I_n (   f_n\big) $.
Thus the proof of (ii) (and hence that of Lemma \ref{WID}) is completed. \qedhere

\end{proof}

\subsubsection{Contractions.}   
Let us recall its definition  in the real setting (see, e.g., \cite[Appendix B]{NP12}).
Let $f\in \fH^{\otimes m}$ and $g\in  \fH^{\otimes n}$ with $m, n\in\N_{\geq 1}$.
$f\otimes_0 g = f\otimes g$ is the usual tensor product of $f$ and $g$.
For $r=1, ... , \min\{m, n\}$, the $r$-contraction between $f$ and $g$ is
an element in $\fH^{\otimes m+n-2r}$ defined by

\noi
\begin{align}\label{contra1}
\begin{aligned}
&f \otimes_r g 
= \sum_{\substack{i_1, ... , i_{n-r} \\ j_1, ... , j_{m-r}  \\ \ell_1, ... , \ell_r  }}
\langle f, h_{i_1}\otimes \cdots \otimes h_{i_{n-r}}\otimes h_{\ell_1} 
\otimes \cdots \otimes h_{\ell_r} \rangle_{\fH^{\otimes n}} \\
&  \quad \times
\langle g, h_{j_1}\otimes \cdots \otimes h_{j_{m-r}}\otimes h_{\ell_1} 
\otimes \cdots \otimes h_{\ell_r} \rangle_{\fH^{\otimes m}} 
h_{i_1}\otimes \cdots \otimes h_{i_{n-r}}\otimes
h_{j_1}\otimes \cdots \otimes h_{j_{m-r}}.
\end{aligned}
\end{align}

\noi
Now consider $f\in \fH^{\otimes m}\otimes K$ and $g\in  \fH^{\otimes n} \otimes K$,
we can define 
$$
f\otimes_r g = \sum_{i,j\geq 1} \big( \jb{f, k_i}_K \otimes_r  \jb{g, k_j}_K  \big)\otimes k_i \otimes k_j
$$
for $r=0,1, ... , \min\{m, n\}$,
where $\jb{f, k_i}_K \otimes_r  \jb{g, k_j}_K $ is defined as in 
\eqref{contra1}.

 \smallskip

With the above chaos expansion, we introduce the 
Malliavin operators in the spirit of \cite{NV90}.

\subsubsection{Malliavin derivative.}  
Recall from \cite[Chapter 2]{NP12} that 
the space $\mathbb D^{1,2}$ of real-valued,  Malliavin differentiable, and 
square-integrable random variables can be defined by

\noi
\begin{align}
\mathbb D^{1,2} 
= \{  Y\in L^2_\o\R \,\, \text{as in \eqref{chaos1}}:
\,\, \sum_{n\geq 1}n n! \| g_n\|^2_{\fH^{\otimes n}} < +\infty    \}.
\label{DR}
\end{align}
For  $Y\in \mathbb D^{1,2}$ as in \eqref{chaos1}, 
we have 
$
DY = \sum_{n\geq 1} n I_{n-1} (   g_n\big)  \in L^2(\O; \fH).
$ 
Similarly, we can define 
\noi
 \begin{align}
\mathbb D^{1,2}(K) 
:= \{  F\in L^2_\o K\,\, \text{as in \eqref{chaos2}}:
\,\, \sum_{n\geq 1} n n! \| f_n\|^2_{\fH^{\otimes n} \otimes K} < +\infty   \},
\label{DK}
\end{align}
and for such a random variable $F\in \mathbb D^{1,2}(K) $, we define 

\noi
\begin{align}\label{exp_DF}
DF := \sum_{n\geq 1} n I_{n-1} (   f_n\big)  \in L^2(\O; \fH\otimes K),
\end{align}

\noi
which satisfies 
$\E\big[ \| DF \|^2_{L^2(\O; \fH\otimes K)} \big] 
= \sum_{n\geq 1} n n! \| f_n\|^2_{\fH^{\otimes n} \otimes K} $.
A comparison of this expression with \eqref{VarF} leads to the 
Gaussian Poincar\'e inequality:
it holds for $F\in \mathbb D^{1, 2}(K)$ that 

\noi
\begin{align}\label{GPI}
 \big\| F - \E[F] \big\|^2_{L^2_\o K} \leq \E\big[ \| DF \|^2_{L^2(\O; \fH\otimes K)} \big] 
\end{align}
with equality when and only when $F$ lives in the direct sum of $K$ and first chaos. 
Note also that $DF$ coincides with 

\noi
\begin{align} \label{C1}
\sum_{i=1}^\infty \big( D \langle F, k_i \rangle_K \big) \otimes  k_i
= \sum_{i=1}^\infty  \sum_{n= 1}^\infty n I_{n-1} (  \langle f_n, k_i \rangle_K\big)  \otimes  k_i.
\end{align}

\noi
Alternatively, assuming 
\begin{align}\label{OBH}
\text{$\{h_j: j\geq1\}$ is an orthonormal basis of $\fH$, }
\end{align}
we can write 

\noi
\begin{align}\label{C2a}
DF = \sum_{j=1}^\infty   \jb{DF, h_j}_\fH  \otimes h_j 
= \sum_{j=1}^\infty    \bigg( \sum_{n=1}^\infty n I_{n-1} \big(  \jb{f_n, h_j }_\fH \big)  \bigg) \otimes h_j ,
\end{align}

\noi
where $\jb{f_n, h_j }_\fH \in\fH^{\odot (n-1)}\otimes K$ and 
$I_{n-1} \big(  \jb{f_n, h_j }_\fH \big) $ is defined according to Lemma \ref{WID}.
It  is easy to see from \eqref{C1}
 that for $F = f( W(\phi_1), ... , W(\phi_m) ) v$ with
$v\in K$,  $f:\R^m \to \R$ bounded smooth,
and
$\phi_j \in \fH$, 
we have 
\begin{align}\label{c_chain}
DF 
= \big( D f( W(\phi_1), ... , W(\phi_m) ) \big) \otimes v
= \sum_{i=1}^m \partial_{i} f( W(\phi_1), ... , W(\phi_m) )  \phi_i \otimes v,
\end{align}

\noi
which is exactly stated in \cite[(4.1)]{BC20}. We can also define iterated Malliavin derivatives.
For our purpose, we only define  
$$
\mathbb{D}^{2,2}(K) : = 
\{  F\in L^2_\o K\,\, \text{as in \eqref{chaos2}}:
\,\, \sum_{n\geq 1} n^2 n! \| f_n\|^2_{\fH^{\otimes n} \otimes K} < +\infty   \}
$$
and for such $F\in \mathbb{D}^{2,2}(K) $,
$$
D^2F =  \sum_{n\geq 2} n(n-1) I_{n-2} (   f_n\big)  \in L^2(\O; \fH^{\odot 2}\otimes K).
$$

\subsubsection{Divergence operator.} As in the real case,  the divergence operator $\dl$ is 
defined as the adjoint operator of $D$
and is characterized by the following duality relation:

\noi
\begin{align}
\E\big[ \langle DF, V \rangle_{\fH\otimes K} \big] = \E[ \jb{F , \dl(V)}_K ]
\label{dualR}
\end{align}

\noi
for any $F\in \mathbb{D}^{1,2}(K)$.
In view of Riesz's representation theorem, we let $\dom(\dl)$ be the set of
$V\in L^2(\O ; \fH \otimes K)$ such that there is some finite constant $C=C(V) > 0$ such that
$$
\big| \E\big[ \langle DF, V \rangle_{\fH\otimes K} \big]  \big| \leq C \| F \|_{L^2_\o K}
$$
for any $F\in\mathbb{D}^{1,2}(K)$.
Then, the duality relation \eqref{dualR} holds for any $(F, V)\in \mathbb{D}^{1,2}(K)\times \dom(\dl)$.

Suppose $V\in L^2(\O; \fH \otimes K)$. Then, for every $h_j\in \fH$ as in \eqref{OBH},
$\jb{V, h_j}_\fH \in L^2_\o K$.
 Then, by chaos decomposition (Lemma \ref{WID}-(ii))
we can write
\begin{align}
\jb{V, h_j}_\fH = \E \big[ \jb{V, h_j}_\fH \big] + \sum_{n=1}^\infty I_n (g_{n,j}  ) ,
\label{CD7}
\end{align}

\noi
where $g_{n,j}   \in  \fH^{\odot n} \otimes K$ for each $j\in\N_{\geq 1}$
 and we write $g_{0, j}  = \E  [ \jb{V, h_j}_\fH  ].$
Note that $V\in L^2(\O; \fH \otimes K)$ forces 

\noi
\begin{align}
g_n: =  \sum_{j\geq 1} g_{n,j} \otimes h_j   \in \fH^{\otimes (n+1)} \otimes K
\quad \text{for every $n\in\N_{\geq 1}$.}
\label{GM}
\end{align}

\noi
Note that $g_n$ may not belong to $\fH^{\odot (n+1)} \otimes K$.  We denote by 
$\wt{g}$ the canonical symmetrization of $g \in \fH^{\otimes n} \otimes K  $
in the $n$ coordinates of $ \fH^{\otimes n}$:

\noi
\begin{align}\label{SYM}
\wt{g}  = \frac{1}{n!} \sum_{\s\in\mathfrak{S}_n} \sum_{i_1, ... , i_n\geq 1}
h_{i_{\s(1)}}  \otimes \cdots \otimes h_{i_{\s(n)}} 
\langle g, h_{i_1}\otimes \cdots \otimes h_{i_n} \rangle_{\fH^{\otimes n}}  , 
\end{align}

\noi
where $\mathfrak{S}_n$ denotes the set of permutations on $\{1, ... , n\}$.
It is clear that $\wt{g}\in \fH^{\odot n} \otimes K$.
For non-symmetric kernel $g$, we define 

\noi
\begin{align}
 \text{$I_n(g) = I_n(\wt{g})$
 for $g \in \fH^{\otimes n} \otimes K$.}
\label{Nsym}
\end{align}

Assume first that there are finitely many chaoses in the above series \eqref{CD7}:

\noi
\begin{align}
\text{$g_{n, j}  = 0$ for $n >  M$ and $j\geq 1$.}
\label{CD7b}
\end{align}

\noi
Then, for $F\in\mathbb{D}^{1,2}(K)$ having the form \eqref{chaos2},
we  deduce from Parseval's identity on $\fH$,  \eqref{C2a}, \eqref{CD7}, Fubini's theorem,
 orthogonality relation \eqref{OR}, and \eqref{GM} that

\noi
\begin{align}
\begin{aligned}
\E \big[ \langle DF, V \rangle_{\fH\otimes K} \big]
&= \E \sum_{j\geq 1}  \Big\langle  \jb{DF, h_j}_\fH,  \jb{V, h_j}_\fH \Big\rangle_K \\
&=   \E \sum_{j\geq 1}  \Big\langle
        \sum_{n=1}^\infty n I_{n-1} \big(  \jb{f_n, h_j }_\fH \big)   ,
       \sum_{m=0}^M I_m (g_{m,j}  )  \Big\rangle_K \\
&= \sum_{j\geq 1} \sum_{m=0}^M (m+1)!  \big\langle \jb{f_{m+1}, h_j }_\fH,   
                    g_{m,j } \big\rangle_{\fH^{\otimes m} \otimes K} \\
& =    \sum_{m=0}^M (m+1)!  \big\langle  f_{m+1},  
                    g_{m } \big\rangle_{\fH^{\otimes m+1} \otimes K}  \\
&   =  \sum_{m=0}^M (m+1)!  \big\langle  f_{m+1},  
                   \wt g_{m } \big\rangle_{\fH^{\otimes m+1} \otimes K}, 
\end{aligned}
\label{CD8}
\end{align}

\noi
 which, together with Cauchy-Schwarz's inequality,
implies that

\noi
\begin{align}
\begin{aligned}
\big| \E \big[ \langle DF, V \rangle_{\fH\otimes K} \big] \big|
&\leq  \bigg(\sum_{n=1}^{M+1} n! \| f_n\|^2_{\fH^{\otimes n} \otimes K}  \bigg)^{\frac12}
\bigg(\sum_{m=0}^M (m+1)! \|  \wt{g}_m \|^2_{\fH^{\otimes {m+1}} \otimes K}  \bigg)^{\frac12} \\
&\leq  \| F\|_{L^2_\o K} 
\bigg(\sum_{m=0}^M (m+1)! \|  \wt{g}_m \|^2_{\fH^{\otimes {m+1}} \otimes K}  \bigg)^{\frac12} .
\end{aligned}
\label{CD9}
\end{align}

\noi
In particular, we proved that for $V\in L^2(\O; \fH \otimes K)$ satisfying \eqref{CD7b},
$V$ belongs to $\dom(\dl)$;\footnote{This
also tells us that $\dom(\dl)$ is dense in $L^2(\O; \fH \otimes K)$.}
and in this case, we deduce again from \eqref{CD8}
and \eqref{OR} that

\noi
\begin{align}
\begin{aligned}
\E \big[ \langle DF, V \rangle_{\fH\otimes K} \big]
& =  \sum_{m=0}^M  \E\big[ \jb{ I_{m+1}( f_{m+1}),  I_{m+1}(  \wt{g}_m ) }_K  \big] \\
&= \E \bigg[ \Big\langle  F,   \sum_{m=0}^M I_{m+1}(\wt{g}_m   ) \Big\rangle_K \bigg]
\end{aligned}
\label{CD9b}
\end{align}
for any $F\in\mathbb{D}^{1,2}(K)$, and thus,
\begin{align}
\dl(V) =  \sum_{m=0}^\infty I_{m+1}(\wt{g}_m   ) .
\label{CD9c}
\end{align}

One can easily generalize this particular case of \eqref{CD7b}
to the following result, whose proof
is omitted; see also \cite[Lemma 2.4]{BZ24}
and  \cite[Lemma 2.12]{BZ25} for similar arguments.

\begin{lemma}\label{lem:dl}
Suppose $V\in L^2(\O; \fH \otimes K)$ 
has the expression \eqref{CD7} with   \eqref{GM}. 
Then, 
  $V\in\dom(\dl)$ and $\dl(V)$ is given as in \eqref{CD9c}
  if and only if 
  
  \noi
\begin{align}\label{cond:dl}
\sum_{m=0}^\infty (m+1)! \| \wt{g}_m\|^2_{\fH^{\otimes (m+1)} \otimes K}  <\infty.
\end{align}

\noi
In particular, $\E[ \dl(V) ]$ is the zero vector in $K$ when \eqref{cond:dl} holds.

\end{lemma}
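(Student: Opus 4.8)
The plan is to reduce the statement to the finite-chaos case already settled in \eqref{CD7b}--\eqref{CD9c} by a truncation argument. Given $V$ as in \eqref{CD7}, for each $M\in\N$ I would put
\[
V_M := \sum_{j\geq 1}\biggl( \E\bigl[\jb{V,h_j}_\fH\bigr] + \sum_{n=1}^{M} I_n(g_{n,j}) \biggr)\otimes h_j .
\]
Its components $\jb{V_M,h_j}_\fH$ satisfy \eqref{CD7b} with that $M$, so \eqref{CD9c} applies and $\dl(V_M)=\sum_{m=0}^{M} I_{m+1}(\wt g_m)$. Applying the isometry \eqref{VarF} along each $h_j$ and summing over $j$ gives
\[
\bigl\| V-V_M \bigr\|^2_{L^2(\O;\fH\otimes K)} = \sum_{j\geq 1}\sum_{n> M} n!\,\|g_{n,j}\|^2_{\fH^{\otimes n}\otimes K},
\]
and since $\sum_{j,n\geq 1} n!\,\|g_{n,j}\|^2_{\fH^{\otimes n}\otimes K} = \sum_{j\geq 1}\bigl\|\jb{V,h_j}_\fH - \E[\jb{V,h_j}_\fH]\bigr\|^2_{L^2_\o K}\leq \|V\|^2_{L^2(\O;\fH\otimes K)}<\infty$, the right-hand side tends to $0$, i.e.\ $V_M\to V$ in $L^2(\O;\fH\otimes K)$ as $M\to\infty$.

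For the ``if'' direction, assume \eqref{cond:dl}. By the orthogonality relation \eqref{OR}, for $M'\le M$,
\[
\bigl\| \dl(V_M)-\dl(V_{M'}) \bigr\|^2_{L^2_\o K} = \sum_{m=M'+1}^{M} (m+1)!\,\|\wt g_m\|^2_{\fH^{\otimes (m+1)}\otimes K},
\]
which is a tail of a convergent series, so $\dl(V_M)$ converges in $L^2_\o K$ to $G:=\sum_{m\geq 0} I_{m+1}(\wt g_m)$. Then, for every $F\in\mathbb{D}^{1,2}(K)$, applying the duality \eqref{dualR} to each $V_M$ and using Cauchy--Schwarz,
\[
\E\bigl[\jb{DF,V}_{\fH\otimes K}\bigr]=\lim_{M\to\infty}\E\bigl[\jb{DF,V_M}_{\fH\otimes K}\bigr]=\lim_{M\to\infty}\E\bigl[\jb{F,\dl(V_M)}_K\bigr]=\E\bigl[\jb{F,G}_K\bigr],
\]
whence $\bigl|\E[\jb{DF,V}_{\fH\otimes K}]\bigr|\le\|F\|_{L^2_\o K}\,\|G\|_{L^2_\o K}$. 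By the definition of $\dom(\dl)$ stated before the lemma this shows $V\in\dom(\dl)$, and the identity $\E[\jb{DF,V}_{\fH\otimes K}]=\E[\jb{F,G}_K]$ valid for all such $F$ forces $\dl(V)=G$, i.e.\ \eqref{CD9c}. (Equivalently, one may invoke that $\dl=D^{*}$ is a closed operator and apply it to $V_M\to V$, $\dl(V_M)\to G$.)

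Conversely, if $V\in\dom(\dl)$ and $\dl(V)$ is given by \eqref{CD9c}, then the series $\sum_{m\geq0}I_{m+1}(\wt g_m)$ converges in $L^2_\o K$; its partial sums are Cauchy, and the orthogonality computation above shows that the partial sums of $\sum_{m\geq0}(m+1)!\,\|\wt g_m\|^2_{\fH^{\otimes(m+1)}\otimes K}$ are Cauchy in $\R$, which is exactly \eqref{cond:dl}. Finally, under \eqref{cond:dl} the equality $\dl(V)=\sum_{m\geq0}I_{m+1}(\wt g_m)$ holds with $L^2_\o K$-convergence, hence with $L^1$-convergence; since each $I_{m+1}(\wt g_m)$ is a multiple integral of order $m+1\ge1$ and therefore centered, the expectation commutes with the sum and $\E[\dl(V)]$ equals the zero vector of $K$. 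The one delicate point is the passage to the limit $M\to\infty$: establishing $V_M\to V$ in $L^2(\O;\fH\otimes K)$ (which rests on the square-summability $\sum_{j,n\geq1}n!\,\|g_{n,j}\|^2_{\fH^{\otimes n}\otimes K}\le\|V\|^2_{L^2(\O;\fH\otimes K)}$) and transferring this convergence through the duality relation; everything else is a direct consequence of Wiener--It\^o orthogonality and the finite-chaos computation already carried out.
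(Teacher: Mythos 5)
Your proof is correct and is essentially the argument the paper has in mind: the paper omits the proof of Lemma \ref{lem:dl}, stating only that it is an ``easy generalization'' of the finite-chaos computation \eqref{CD8}--\eqref{CD9c}, and your truncation $V_M\to V$ combined with the orthogonality relation \eqref{OR} and the duality \eqref{dualR} is precisely that generalization (the same scheme used in the cited references and, internally, in the proof of Lemma \ref{LdlD}). The only step left implicit is that the identity $\E[\jb{F,\dl(V)}_K]=\E[\jb{F,G}_K]$ for all $F\in\mathbb{D}^{1,2}(K)$ forces $\dl(V)=G$, which requires the (standard) density of $\mathbb{D}^{1,2}(K)$ in $L^2_\o K$.
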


As a consequence, for a deterministic function $\phi\in \fH \otimes K$,
we have
\begin{align}\label{EXT1}
 \dl(\phi) = I_1(\phi).
\end{align}

\subsubsection{Ornstein-Uhlenbeck semigroup and generator.}  The Ornstein-Uhlenbeck  (OU)
semigroup $\{P_t: t\in\R_+\}$ on $L^2_\o K$ can be defined as follows:
for $F$ as in \eqref{chaos2},
$$
P_t F = \E[ F] + \sum_{n\geq 1} e^{-nt } I_n(f_n).
$$
It is evident  that $\| P_t F \|_{L^2_\o K} \leq \|  F \|_{L^2_\o K}$
for any $F\in L^2_\o K$; see also \eqref{OU1}. The associated OU generator $L$
can be defined by 

\noi
$$
L F =   \sum_{n\geq 1} -n  I_n(f_n)
$$
for $F\in\dom(L) \equiv \mathbb D^{2,2}(K)$. In other words, 
$F\in \dom(L)$ if and only if $F\in L^2_\o K$ verifies that
$t^{-1} (P_t F  - F)$ is convergent in $L^2_\o K$ as $t\downarrow 0$,
with the limit equal to $LF$. The following result also follows easily 
from the basic definitions of $D, \dl$, and $L$.

\begin{lemma} \label{LdlD}
The identity $L = -\dl D$ holds:
 $F\in \dom(L)$ if and only if $F\in\mathbb{D}^{1,2}(K)$ and $DF\in\dom(\dl)$.
 In each case, we have $LF  = -\dl DF$.

\end{lemma}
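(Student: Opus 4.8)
The plan is to verify the identity $L=-\delta D$ by working directly on the Wiener chaos decomposition, using the explicit formulas already established for $D$, $\delta$, and $L$. First I would take $F\in L^2_\omega K$ with chaos expansion $F=\E[F]+\sum_{n\geq 1}I_n(f_n)$ as in \eqref{chaos2}, and simply compute $DF$ from \eqref{exp_DF}, namely $DF=\sum_{n\geq 1}nI_{n-1}(f_n)$, which lives in $L^2(\O;\fH\otimes K)$ whenever $F\in\mathbb D^{1,2}(K)$. The key observation is that $DF$ already comes in the ``expanded in $\fH$'' form required by Lemma \ref{lem:dl}: writing $DF=\sum_{j\geq 1}\jb{DF,h_j}_\fH\otimes h_j$ as in \eqref{C2a}, the $m$-th chaos kernel of the component $\jb{DF,h_j}_\fH$ is $(m+1)\jb{f_{m+1},h_j}_\fH$, so in the notation of \eqref{CD7}–\eqref{GM} we have $g_{m,j}=(m+1)\jb{f_{m+1},h_j}_\fH$ and hence $g_m=\sum_{j\geq 1}g_{m,j}\otimes h_j=(m+1)f_{m+1}\in\fH^{\otimes(m+1)}\otimes K$.

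Next I would check the summability condition \eqref{cond:dl} for $DF$. Since $f_{m+1}\in\fH^{\odot(m+1)}\otimes K$ is already symmetric in its $\fH$-coordinates, its canonical symmetrization $\wt g_m$ is just $g_m=(m+1)f_{m+1}$, so $\sum_{m\geq 0}(m+1)!\,\|\wt g_m\|^2_{\fH^{\otimes(m+1)}\otimes K}=\sum_{m\geq 0}(m+1)!\,(m+1)^2\|f_{m+1}\|^2_{\fH^{\otimes(m+1)}\otimes K}=\sum_{n\geq 1}n\cdot n!\,\|f_n\|^2_{\fH^{\otimes n}\otimes K}$, which is finite precisely when $F\in\mathbb D^{2,2}(K)$. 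By Lemma \ref{lem:dl} this shows $DF\in\dom(\delta)$ if and only if $F\in\mathbb D^{2,2}(K)=\dom(L)$, and when that holds, formula \eqref{CD9c} gives $\delta(DF)=\sum_{m\geq 0}I_{m+1}(\wt g_m)=\sum_{m\geq 0}(m+1)I_{m+1}(f_{m+1})=\sum_{n\geq 1}nI_n(f_n)=-LF$. That establishes both the domain equivalence and the identity.

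It remains to argue the one direction not yet covered: if $F\in\dom(L)$ then automatically $F\in\mathbb D^{1,2}(K)$ and $DF\in\dom(\delta)$. The first inclusion is immediate since $\mathbb D^{2,2}(K)\subset\mathbb D^{1,2}(K)$ by comparing the defining series $\sum_n n\,n!\|f_n\|^2\leq\sum_n n^2 n!\|f_n\|^2$, and the second was just shown; conversely, if $F\in\mathbb D^{1,2}(K)$ with $DF\in\dom(\delta)$, the same computation of \eqref{cond:dl} forces $\sum_n n\cdot n!\|f_n\|^2<\infty$, i.e.\ $F\in\mathbb D^{2,2}(K)=\dom(L)$. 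So the two conditions are equivalent, completing the proof.

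I do not expect a genuine obstacle here: the entire argument is bookkeeping on chaos coefficients, and all the nontrivial analytic content (well-definedness of multiple integrals, the orthogonality relation, and the characterization of $\dom(\delta)$ via \eqref{cond:dl}) has already been packaged into Lemma \ref{WID} and Lemma \ref{lem:dl}. The only mild point to be careful about is to note explicitly that the symmetrization step is vacuous because the kernels $f_n$ in the chaos expansion of $F$ are already symmetric, so that $\wt g_m=(m+1)f_{m+1}$; this is what makes the $\delta$ formula collapse cleanly to $-LF$.
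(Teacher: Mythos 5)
Your proof is correct, and the forward direction (computing the chaos kernels of $DF$, identifying $g_m=(m+1)f_{m+1}$, noting the symmetrization is vacuous, and reading off $\dl(DF)=\sum_n nI_n(f_n)=-LF$ from \eqref{CD9c}) is exactly what the paper does. Where you diverge is the converse: you obtain ``$DF\in\dom(\dl)$ implies $F\in\mathbb D^{2,2}(K)$'' by invoking the \emph{only if} half of Lemma \ref{lem:dl}, whose proof the paper omits; the paper instead proves this direction by hand, truncating $DF$ to $V_M=\sum_{n=1}^M nI_{n-1}(f_n)$, using the duality relation \eqref{dualR} to show $\dl(V_M)$ converges weakly in $L^2_\o K$ to $\dl(DF)$, and extracting the uniform bound $\sum_n n^2 n!\|f_n\|^2<\infty$ from the resulting boundedness of $\{\dl(V_M)\}$. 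Your route is shorter and legitimate given that Lemma \ref{lem:dl} is stated as a full equivalence, but it offloads the analytic content onto that omitted proof, whereas the paper's argument is self-contained (and in effect re-derives the needed half of Lemma \ref{lem:dl} in this special case). One small slip worth fixing: your chain of equalities should read $\sum_{m\geq 0}(m+1)!\,(m+1)^2\|f_{m+1}\|^2=\sum_{n\geq 1}n^2\,n!\,\|f_n\|^2$, not $\sum_{n\geq 1}n\cdot n!\,\|f_n\|^2$ (the latter is the $\mathbb D^{1,2}(K)$ condition); your stated conclusion that finiteness is equivalent to $F\in\mathbb D^{2,2}(K)$ matches the corrected expression, so nothing downstream is affected.
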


\begin{proof}
Recall from \eqref{chaos2} that $F\in L^2_\o K$ admits the following chaos expansion:
$$
F = \E[F] +  \sum_{n\geq 1} I_n(f_n),
$$
where $f_n\in\fH^{\odot n} \otimes K$. 

First, assume $F\in \dom(L)$. That is, we have 
$
\sum_{n\geq 1} n^2 n! \| f_n \|^2_{ \fH^{\otimes n} \otimes K } <+\infty
$.
It follows that $F\in \mathbb{D}^{1,2}(K)$ with

\noi
\begin{align}\label{DF2}
DF =  \sum_{n\geq 1}n  I_{n-1}(f_n) \in L^2(\O; \fH \otimes K).
\end{align}

\noi
Put $V  = DF$. Then, it is clear that $V$ satisfies the assumptions in Lemma \ref{lem:dl}:
\begin{itemize}
\item[(i)]
$\jb{V, h_j}_\fH = \sum_{n\geq 1}   I_{n-1}( g_{n,j} ) $ with 
$g_{n,j} = n \jb{f_n, h_j}_\fH \in \fH^{\odot {n-1}} \otimes K$.

\item[(ii)] $g_n = \sum_{j\geq 1}  g_{n,j} \otimes h_j$ coincides with $n f_n\in \fH^{\odot n}\otimes K$.

\item[(iii)] The condition \eqref{cond:dl} is clearly satisfied due to 
the finiteness of $\sum_{n\geq 1} n^2 n! \| f_n \|^2_{ \fH^{\otimes n} \otimes K }$.
\end{itemize}

\noi
Therefore, $DF\in \dom(\dl)$ with
$$
\dl (DF) =  \sum_{n\geq 1}n  I_n(f_n) = - LF.
$$

Next, we assume that $F\in\mathbb{D}^{1,2}(K)$ and $DF\in\dom(\dl)$.
Then, $DF$ is given as in \eqref{DF2} such that 

\noi
\begin{align}
V_M:= \sum_{n= 1}^M n  I_{n-1}(f_n) 
 \quad
 \text{converges in $L^2(\O; \fH\otimes K)$ to $DF$ as $M\to+\infty$.} 
\label{DF2b}
\end{align}
It is clear that $V_M\in\dom(\dl)$ with

\noi
\begin{align}
\dl(V_M) = \sum_{n= 1}^M n  I_{n}(f_n). 
\label{DF2c}
\end{align}

\noi
 It follows from \eqref{DF2c}, duality relation \eqref{dualR},
 and  \eqref{DF2b}
that for any $G\in\mathbb{D}^{1,2}(K)$,

\noi
\begin{align}
\begin{aligned}
\E\Big[ \big\langle G,    \sum_{n= 1}^M n  I_{n}(f_n)    \big\rangle_K \Big]  
&= \E\big[ \jb{G, \dl(V_M) }_K \big]  \\
&= \E\big[ \jb{DG,  V_M }_{\fH\otimes K} \big] \\
& \xrightarrow{M\to+\infty} 
 \E\big[ \jb{DG,  DF }_{\fH\otimes K} \big] =  \E\big[ \jb{G,  \dl (DF) }_{K} \big]. 
\end{aligned}
\label{DF2d}
\end{align}
That is, we just proved that $\dl(V_M)= \sum_{n= 1}^M n  I_{n}(f_n) $ converges weakly on $L^2_\o K$
to $\dl(DF)$. In particular $\{ \dl(V_M): M\geq 1\}$ is uniformly bounded in  $L^2_\o K$,
which yields 
$$
   \sum_{n= 1}^\infty  n^2 n!  \| f_n\|^2_{\fH^{\otimes n} \otimes K} <\infty. 
$$
That is, $F\in \dom(L)$ and $\dl(V_M)= \sum_{n= 1}^M n  I_{n}(f_n) $ converges to $\dl(DF)$
in  $L^2_\o K$. Hence, $LF = - \dl DF$, and the proof is completed. \qedhere

\end{proof}

\subsubsection{Mehler's formula.}
Next, we present a very useful formula that provides a representation of the OU semigroup. 
Let $W'$ be an independent copy of $W$, and define 

\noi
\begin{align}\label{WT}
W^t = e^{-t} W + \sqrt{1-e^{-2t}}W',
\end{align}

\noi
which has the same law as $W$ for every $t\in\R_+$. 
For any $F\in L^1_\o K$, one can represent it as a functional 
of the isnormal process $W$ as follows.

\noi
\begin{itemize}

\item[(i)] For each $i\in\N_{\geq 1}$, $F_i = \jb{F, k_i}_K \in L^1_\o \R$.
Then, there is some measurable mapping $\Psi_i: \R^\fH \to \R$
(that is determined $\P\circ W^{-1}$ almost surely) such that 
$F_i  = \Psi_i(W)$;
see, for example, \cite[Section 2.8.1]{NP12}.
 Therefore, 
we can write 

\noi
\begin{align}\label{rep1}
F = \sum_{i\geq 1} \Psi_i(W) k_i .
\end{align}

\item[(ii)] Alternatively, there is some measurable mapping $\Psi: \R^\fH \to K$
(that is determined $\P\circ W^{-1}$ almost surely) such that

\noi
\begin{align}\label{rep2}
\text{$F  = \Psi(W)$ and $\jb{\Psi(W), k_i}_K = \Psi_i(W)$.}
\end{align}

\end{itemize}
Then, analogous to \cite[Theorem 2.8.2]{NP12}, we can derive 
the following Mehler's formula:\footnote{One can understand the conditional
expectation of a $K$-valued random variable as in the second 
equality in \eqref{Mehler}. 
Or in an abstract way, one can see the conditional expectation 
$\E[  \, \cdot \,  | \s\{W\} ]$
as a norm-one projection operator 
from $L^p(\O, \cF, \P; K)$ onto $L^p(\O, \s\{W\}, \P; K)$
viewed as a subspace of $L^p(\O, \cF, \P; K)$, 
$p\in[1,\infty]$,
such that $\E[  Y X  | \s\{W\} ] = Y \E[  X  | \s\{W\} ] $
for any $Y\in L^\infty(\O, \s\{W\}, \P;\R)$
and $X\in L^p(\O, \cF, \P; K)$; 
see \cite[Section 1.2]{P16}.
}

\noi
\begin{align}
\begin{aligned}
P_t F 
&= \E\big[ \Psi(W^t) | \s\{W\} \big] \\
&= \sum_{i\geq 1} \E\big[ \Psi_i(W^t) | \s\{W\} \big] k_i
\end{aligned}
\label{Mehler}
\end{align}

\noi
for $F = \Psi(W) \in L^1_\o K$ as in \eqref{rep1}-\eqref{rep2},
where $W^t$ is introduced in \eqref{WT}.

As a consequence of the Mehler's formula \eqref{Mehler},
we have 
\begin{align}
\| P_t F \|_{L^p_\o K} \leq \|  F \|_{L^p_\o K}
\label{OU1}
\end{align}
for any $F\in L^p_\o K$ and $p\in [1, \infty]$.
Indeed, using Mehler's formula and properties of 
conditional expectation (see, e.g.,
 \cite[Proposition 1.12]{P16}),
we have 

\noi
\begin{align*}
\| P_t F \|_{L^p_\o K}
&= \big\|  \E\big[ \Psi(W^t) | \s\{W\} \big]  \big\|_{L^p(\O, \cF, \P; K)} \\
&\leq  \big\|  \E (   \|  \Psi(W^t) \|_K  | \s\{W\} )  \big\|_{  L^p(\O, \cF, \P; \R)  }\\
&\leq \big\|    \Psi(W^t) \big\|_{L^p(\O, \cF, \P; K)}  = \|  F \|_{L^p_\o K}.
\end{align*}

\subsubsection{Pseudo-inverse of OU generator.} For  $F\in L^2_\o K$ as in \eqref{chaos2},
$$
L^{-1} F =  \sum_{n\geq 1}  -\frac{1}{n}I_n(f_n).
$$
Then,  $L^{-1}F\in \dom (L)$ with $LL^{-1}F = F - \E[ F]$.
Similarly, for $G\in\dom(L)$, we have   $L^{-1}LG = G - \E[G]$.
The operator $L^{-1}$ is called the pseudo-inverse of OU generator $L$. 
It connects with the OU semigroup naturally:
for $F\in L^2_\o K$ with $\E[F] = 0$, we have 

\noi
\begin{align}\label{rep3}
- L^{-1}F = \int_0^\infty P_t F dt.
\end{align}

\noi
To verify \eqref{rep3}, we begin with $F$ as in \eqref{chaos2} and $\E[F]=0$.
Then, 
\begin{align*}
\text{RHS of \eqref{rep3}}
&=  \int_0^\infty  \sum_{n\geq 1} e^{-nt} I_n(f_n) dt \\
&=   \sum_{n\geq 1} \bigg( \int_0^\infty e^{-nt}  dt \bigg)I_n(f_n)= - L^{-1}F,
\end{align*}
where the interexchange of the sum and integration follows from Fubini's theorem
and the following bound:

\noi
\begin{align*}
&\quad \int_0^\infty  \sum_{n\geq 1} e^{-nt}  \|  I_n(f_n)  \|_K  \, dt 
=   \sum_{n\geq 1}  \frac{1}{n}  \|  I_n(f_n)\|_K   \\
&\leq 
 \bigg(  \sum_{n\geq 1}  \frac{1}{n^2}\bigg)^{\frac12}
  \Bigg[ \sum_{n\geq 1}    \|  I_n(f_n)\|^2_K     \Bigg]^{\frac12} 
 = \frac{\pi}{\sqrt{6}} \Bigg[ \sum_{n\geq 1}    \|  I_n(f_n)\|^2_K    \Bigg]^{\frac12} \in L^2_\o \R.
\end{align*}
By similar arguments, we can get 

\noi
\begin{align}\label{rep4}
-D L^{-1}F = \int_0^\infty D P_t F dt = \int_0^\infty e^{-t} P_t DF dt.
\end{align}
for any $F\in\mathbb{D}^{1,2}(K)$. Moreover, for any $p\in[1,\infty)$,
we can deduce from \eqref{rep4}, Minkowski's inequality, and \eqref{OU1}
that

\noi
\begin{align}  \label{rep4b}
\begin{aligned} 
\| D L^{-1}F  \|_{L^p(\O; \fH\otimes  K)} 
&\leq  \int_0^\infty e^{-t}  \| P_t DF  \|_{L^p(\O; \fH\otimes  K)}    dt \\
&\leq \| DF \|_{L^p(\O; \fH\otimes  K)}. 
\end{aligned}
\end{align}

\subsection{Malliavin-Stein bounds} \label{SEC2_3}

 Recall the definition \eqref{d2} of the $d_2$ metric. 
In Theorem 3.2 of \cite{BC20} by Bourguin and Campese, the following
Stein's bound is established. Note that in the framework of Wiener structures, 
the carr\'e-du-champs $\Gamma(F, -L^{-1}F)$ therein
 coincides with $\jb{DF, -DL^{-1}F}_\fH$; 
 see also  \cite[Theorem 4.2]{BC20}.

\begin{proposition}\label{prop:BC}
Let $F\in \mathbb{D}^{1,2}(K)$ be such that $\E\big[ \| DF \|^4_{\fH\otimes K} \big] <\infty$
and $\E[F] = 0$.
  Let $Z$ be a centered Gaussian
random variable on $K$ with covariance operator $S_Z$.
Then, 

\noi
\begin{align}\label{MSBC}
d_2(F, Z) \leq \frac{1}{2} \big\|  \jb{DF, -DL^{-1}F}_\fH - S_Z \big\|_{L^2(\O; \textup{HS})},
\end{align}

\noi
where  $\textup{HS}$ denotes the space of Hilbert-Schmidt operators on $K$;
see \eqref{HS}.


\end{proposition}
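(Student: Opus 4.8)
The plan is to obtain \eqref{MSBC} by combining the infinite-dimensional Stein's method for the Gaussian law $\mathcal{N}(0,S_Z)$ on $K$ with the Malliavin integration-by-parts formula \eqref{dualR}, which is the route taken in \cite[Section 3]{BC20}. Fix a test function $g:K\to\R$ from the class defining $d_2$, so that $\sup_{v\in K}\|\nb g(v)\|_K\le1$ and $\sup_{v\in K}\|\nb^2 g(v)\|_{\textup{HS}}\le1$. First I would introduce the $\mathcal{N}(0,S_Z)$-Stein equation
$$\jb{x,\nb f_g(x)}_K-\jb{S_Z,\nb^2 f_g(x)}_{\textup{HS}}=g(x)-\E[g(Z)],\qquad x\in K,$$
and solve it via the associated Mehler-type semigroup: taking $Z'$ an independent copy of $Z$ and writing $\mathcal{Q}_t g(x):=\E\big[g\big(e^{-t}x+\sqrt{1-e^{-2t}}\,Z'\big)\big]$, a solution is $f_g(x)=-\int_0^\infty\big(\mathcal{Q}_t g(x)-\E[g(Z)]\big)\,dt$. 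Differentiating twice under the expectation --- legitimate because $g$ has bounded first and second Fréchet derivatives --- gives $\nb^2 f_g(x)=-\int_0^\infty e^{-2t}\,\E\big[\nb^2 g\big(e^{-t}x+\sqrt{1-e^{-2t}}\,Z'\big)\big]\,dt$, whence the key regularity estimate
$$\sup_{x\in K}\big\|\nb^2 f_g(x)\big\|_{\textup{HS}}\le\Big(\int_0^\infty e^{-2t}\,dt\Big)\sup_{v\in K}\big\|\nb^2 g(v)\big\|_{\textup{HS}}\le\tfrac12.$$

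Next I would evaluate the Stein equation at $x=F$ and take expectations, obtaining
$$\E[g(F)]-\E[g(Z)]=\E\big[\jb{F,\nb f_g(F)}_K\big]-\E\big[\jb{S_Z,\nb^2 f_g(F)}_{\textup{HS}}\big],$$
and rewrite the first term by Malliavin integration by parts. Since $\E[F]=0$ one has $F=\sum_{n\ge1}I_n(f_n)$, so $L^{-1}F\in\dom(L)$ and Lemma \ref{LdlD} gives $\dl\big(-DL^{-1}F\big)=-LL^{-1}F=F$; in particular $-DL^{-1}F\in\dom(\dl)$. A chain-rule argument --- valid first for smooth cylindrical $F$ and then for general $F\in\mathbb{D}^{1,2}(K)$ by approximation, since $f_g\in C^2$ has bounded derivatives --- shows $\nb f_g(F)\in\mathbb{D}^{1,2}(K)$ with $D\big(\nb f_g(F)\big)$ equal to the contraction of $\nb^2 f_g(F)$ against $DF$. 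Applying \eqref{dualR} with $G=\nb f_g(F)$ and $V=-DL^{-1}F$ then yields
\begin{align*}
\E\big[\jb{F,\nb f_g(F)}_K\big]
&=\E\big[\jbb{-DL^{-1}F,\,D\big(\nb f_g(F)\big)}_{\fH\otimes K}\big]\\
&=\E\Big[\big\langle\jb{DF,-DL^{-1}F}_\fH,\,\nb^2 f_g(F)\big\rangle_{\textup{HS}}\Big],
\end{align*}
where $\jb{DF,-DL^{-1}F}_\fH$ denotes the random Hilbert-Schmidt operator on $K$ obtained by pairing the $\fH$-slots of $DF$ and $-DL^{-1}F$. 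Substituting back,
$$\E[g(F)]-\E[g(Z)]=\E\Big[\big\langle\jb{DF,-DL^{-1}F}_\fH-S_Z,\,\nb^2 f_g(F)\big\rangle_{\textup{HS}}\Big].$$

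Finally I would invoke Cauchy--Schwarz in $L^2(\O;\textup{HS})$ together with the regularity estimate from the first step,
\begin{align*}
\big|\E[g(F)]-\E[g(Z)]\big|
&\le\big\|\jb{DF,-DL^{-1}F}_\fH-S_Z\big\|_{L^2(\O;\textup{HS})}\cdot\big\|\nb^2 f_g(F)\big\|_{L^2(\O;\textup{HS})}\\
&\le\tfrac12\big\|\jb{DF,-DL^{-1}F}_\fH-S_Z\big\|_{L^2(\O;\textup{HS})},
\end{align*}
and take the supremum over admissible $g$ to conclude \eqref{MSBC}. The right-hand side is finite under the hypothesis $\E[\|DF\|^4_{\fH\otimes K}]<\infty$: pointwise $\|\jb{DF,-DL^{-1}F}_\fH\|_{\textup{HS}}\le\|DF\|_{\fH\otimes K}\|DL^{-1}F\|_{\fH\otimes K}$, so \eqref{rep4b} with $p=4$ places $\jb{DF,-DL^{-1}F}_\fH$ in $L^2(\O;\textup{HS})$, while $S_Z$ is a fixed Hilbert-Schmidt (indeed trace-class) operator by \eqref{HS}.

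The hardest part is the first step --- rigorously constructing the Stein solution $f_g$ on the infinite-dimensional space $K$ and establishing the Hilbert-Schmidt bound $\sup_x\|\nb^2 f_g(x)\|_{\textup{HS}}\le\tfrac12$, which rests on Fréchet-differentiation of the Mehler semigroup $\mathcal{Q}_t$ --- and a close second is the chain rule $\nb f_g(F)\in\mathbb{D}^{1,2}(K)$ in the second step, which needs an approximation argument because $f_g$ is merely $C^2$ with bounded derivatives rather than polynomial or cylindrical. Both facts are available from \cite{BC20} (see also \cite{Sh11}), so a complete write-up would simply quote them.
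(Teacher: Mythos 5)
Your argument is essentially a reconstruction of the Bourguin--Campese proof: solve the $\mathcal N(0,S_Z)$-Stein equation on $K$ via the Mehler semigroup $\mathcal Q_t$, get the bound $\sup_x\|\nb^2 f_g(x)\|_{\rm HS}\le\frac12$, and convert $\E[\jb{F,\nb f_g(F)}_K]$ by the duality \eqref{dualR} using $\dl(-DL^{-1}F)=F$. That route is sound, but it is \emph{not} the one taken here. The paper deliberately avoids the Stein equation and instead uses the smart-path (interpolation) method: it projects onto $\mathrm{span}\{k_1,\dots,k_N\}$, writes $\E[h\circ P_N(F)]-\E[h\circ P_N(Z)]$ as $\frac12\int_0^1\sum_{i,j}\E[\partial^2_{ij}\wt h(\sqrt{1-t}\,\mathbf F_N+\sqrt t\,\mathbf Z_N)(C_{ij}-\jb{DF_j,-DL^{-1}F_i}_\fH)]\,dt$ following \cite[Theorem 6.1.2]{NP12}, and then bounds this by $\frac12\E\|S_Z-\jb{DF,-DL^{-1}F}_\fH\|_{\rm HS}$ before letting $N\to\infty$. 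The interpolation route trades the construction of $f_g$ for an elementary finite-dimensional Taylor/Gaussian-integration-by-parts computation, at the cost of needing the continuity/projection limit at the end.

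The reason this matters, and the one real weakness in your write-up: your plan is to ``simply quote'' \cite{BC20} (and \cite{Sh11}) for the two hard steps, but Theorem 3.2 of \cite{BC20} is proved under the \emph{additional} hypothesis that $Z$ is non-degenerate, whereas Proposition \ref{prop:BC} as stated here imposes no such condition --- indeed the paper states explicitly that the whole point of switching to the smart-path method is to remove that assumption. So as written, your proof only covers non-degenerate $S_Z$. To close the gap within your framework you would have to verify by hand that the Mehler-semigroup construction of $f_g$, and in particular the generator identity $\frac{d}{dt}\mathcal Q_tg(x)=-\jb{x,\nb\mathcal Q_tg(x)}_K+\jb{S_Z,\nb^2\mathcal Q_tg(x)}_{\rm HS}$ and the convergence of $\int_0^\infty(\mathcal Q_tg(x)-\E[g(Z)])\,dt$, go through for a possibly degenerate covariance operator --- which is plausible (nothing in your sketch inverts $S_Z$) but is precisely the verification you cannot outsource to the cited references. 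Either add the non-degeneracy hypothesis and cite \cite{BC20}, or supply that verification, or switch to the interpolation argument.
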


Note that in Theorem 3.2 of \cite{BC20},  Bourguin and Campese
established their bound \eqref{MSBC} under the additional assumption
that $Z$ is non-degenerate {\rm(}see Definition \ref{def1}{\rm)}.
In the following, we present an alternative proof using 
the smart-path method, and we do not need to assume the 
non-degeneracy.  Let us first explain the terms in \eqref{MSBC}. 
\begin{itemize}

\item[(i)]  The inner product $\jb{DF, -DL^{-1}F}_\fH$ lives in $K\otimes K$,
and we can  view it as an operator mapping from $K$ to $K$. 
More precisely, for $\phi_1, \phi_2\in K$,

\noi
\begin{align}
\label{conv1}
 ( \phi_1\otimes \phi_2) (k):= \phi_1\langle \phi_2, k\rangle_K
 \end{align}
 for any $k\in K$. It follows that 
 
 \noi
 \begin{align}\label{conv1b}
 \| \phi_1 \otimes \phi_2\|_{\rm HS} = \| \phi_1\|_K \| \phi_2\|_K,
 \end{align}
 as one can easily verify.
  It is easy to see from the convention \eqref{conv1} and Parseval's identity
 with the decomposition \eqref{decomp1} 
  that 
 
\noi
\begin{align}
\begin{aligned}
  \big\| \jb{DF, -DL^{-1}F}_\fH \big\|_{\rm HS}^2
&=   \big\|  \sum_{i,j\geq 1} \jb{DF_i, -DL^{-1}F_j}_\fH \, k_i \otimes k_j \big\|_{\rm HS}^2 
\quad \text{with $F_j = \jb{F, k_j}_K$}\\
&=  \sum_{i, j\geq 1}  \big|  \langle  DF_i,   - DL^{-1}F_j  \rangle_\fH  \big|^2 
\leq  \sum_{i, j\geq 1}   \|  DF_i \|^2_{\fH}   \|  DL^{-1}F_j  \|^2_\fH  \\
&= \| DF \|_{\fH\otimes K}^2 \| DL^{-1} F \|_{\fH\otimes K}^2 .
\end{aligned}
\label{BC1}
\end{align}

\noi
It follows from \eqref{BC1}, Cauchy-Schwarz, and \eqref{rep4b}
that 

\noi
\begin{align*}
\E\Big[  \big\| \jb{DF, -DL^{-1}F}_\fH \big\|_{\rm HS}^2 \Big] 
\leq  \E\big[ \| DF \|_{\fH\otimes K}^4 \big] < \infty.
  \end{align*}
This implies particularly that $ \jb{DF, -DL^{-1}F}_\fH $ is indeed an Hilbert-Schmidt
operator on $K$ almost surely,
and the bound in \eqref{MSBC} is finite for $F\in \mathbb{D}^{1,2}(K)$ 
with  $\E\big[ \| DF \|^4_{\fH\otimes K} \big] <\infty$. 

\item[(ii)]  It is not difficult to show that 
\begin{align}  \label{exp_G}
\E\big[  \jb{DF, -DL^{-1}F}_\fH  \big] = S_F,
\end{align}
where $S_F$ denotes the covariance operator of $F$.
Indeed, denoting the LHS of \eqref{exp_G} by $S'_F$,
we have, with the convention \eqref{conv1}, that

\noi
\begin{align}
\begin{aligned}
\jb{ S'_F  k_i, k_j }_K
&= \E\big[  \jb{DF_j, -DL^{-1}F_i}_\fH  \big] \\
&=  \E\big[  F_j ( -\dl DL^{-1}F_i ) \big] = \E\big[  F_j F_i  \big],
\end{aligned}
 \label{exp_G2}
\end{align}
which is equal to $\jb{S_F k_i, k_j}_K$. Therefore, the equality \eqref{exp_G}
is verified. 

\item[(iii)]  In general, $S_F$ may be different from $S_Z$. 
Then, we can further bound the RHS of \eqref{MSBC} by 
using the triangle inequality:

\noi
\begin{align}\label{MSBC2}
d_2(F, Z) 
\leq \frac{1}{2} \big\|  \jb{DF, -DL^{-1}F}_\fH - S_F \big\|_{L^2(\O; \textup{HS})}
+ \frac{1}{2} \| S_F - S_Z \|_{\rm HS}.
\end{align}

\end{itemize}

\begin{proof}[Proof of Proposition \ref{prop:BC}]
Recall the definition of $d_2$ from \eqref{d2}
and fix an orthonormal basis $\{k_i: i\geq 1\}$ of $K$.
We first write using continuity that
\begin{align*}
d_2(F, Z)
&= \sup_{h} \sup_{N\geq 1}  \E[ h\circ P_N(F)] -  \E  [ h\circ P_N(Z) ] ,
\end{align*}
where the first supremum runs over all functions 
$h$ with 
$
\max_{j=1,2} \sup_{x\in K}  \| \nb^j h(x) \|_{K^{\otimes j}} \leq 1$,
and $P_N$ denotes the projection operator onto  
 the subspace of $K$ that is generated by $\{k_i: i\leq N \}$.
 
 With $F_j  = \langle F, k_j\rangle_K$ and $Z_j =  \langle Z, k_j\rangle_K$,
 we can write 
 $$
   \E[ h\circ P_N(F)] -  \E  [ h\circ P_N(Z) ]
   = \E\bigg[ h\Big(  \,  \sum_{j=1}^N F_j k_j \, \Big)\bigg] - \E\bigg[ h\Big(  \,  \sum_{j=1}^N Z_j k_j \, \Big)\bigg],
   $$
 which can be viewed as 
 $$
 \E\big[ \wt{h}(F_1, ..., F_N) \big] -  \E\big[ \wt{h}(Z_1, ..., Z_N) \big]
 $$
 with  $\wt{h}(u_1, ...,u_N) =h\big(   \sum_{j=1}^N u_j k_j  \big) $
 as a function from $\R^N$ to $\R$.
 
 Following the exactly the same lines as in \cite[Theorem 6.1.2]{NP12}
 (see also Proposition 5.1 in \cite{FHMNP23}),
 we can obtain with $\mathbf{F}_N = (F_1, ... , F_N)$ and $\mathbf{Z}_N = (Z_1, ... , Z_N)$
 that 
 
 \noi
 \begin{align*}
&  -  \E\big[ \wt{h}(\mathbf{F}_N) \big] + \E\big[ \wt{h}(\mathbf{Z}_N) \big] \\
  &\qquad = \frac{1}{2} \int_0^1 dt \sum_{i,j=1}^N 
  \E\bigg[  \frac{\partial \wt{h}}{\partial x_i  \partial x_j} ( \sqrt{1-t}\mathbf{F}_N + \sqrt{t} \mathbf{Z}_N)
  \big( C_{i,j} - \langle DF_j, -DL^{-1}F_i \rangle_\fH \big) \bigg],
  \end{align*} 

\noi
where $C_{i, j}: = \jb{S_Z k_j, k_i}_K$. It is not difficult to see that 
the above expression coincides with 
$$
   \frac{1}{2} \int_0^1 dt \,
  \E\Big[ \big\langle  \nabla^2 h\circ P_N ( \sqrt{1-t}F + \sqrt{t}Z),
  S_Z -   \langle DF, -DL^{-1}F \rangle_\fH \big\rangle_{\rm HS} \Big],
  $$
which is bounded by 
$$
\frac{1}{2} \E\Big[ \big\|  S_Z -   \langle DF, -DL^{-1}F \rangle_\fH \big\|_{\rm HS} \Big].
$$
 Hence, the bound \eqref{MSBC2} follows from the triangle inequality. 
 \qedhere

\end{proof}

\bigskip

In view of the bound \eqref{MSBC2}, we will only consider
the case that $F$ and $Z$ share the same covariance operator
that will be denoted by $S$.

Let us first write 

\noi
\begin{align}\label{MSBC3}
\big\| \langle DF, -DL^{-1}F\rangle_\fH -  S  \big\|_{L^2(\O; {\rm HS} )}^2
=\E \bigg( \sum_{i,j\geq 1} Y_{ij}^2 \bigg),
\end{align}

\noi
where\footnote{The definitions of $Y_{ij}$ and $S_{ij}$ align with
the convention \eqref{conv1}.}

\noi
\begin{align}  \label{MSBC3b}
\begin{aligned}
Y
&:=\langle DF, -DL^{-1}F\rangle_\fH - S   \in K^{\otimes 2}  
\\
Y_{ij}  &:=\langle Y, k_i\otimes k_j\rangle_{K^{\otimes 2}}   
              =\langle DF_i, -DL^{-1}F_j\rangle_\fH   -S_{ij} 
            \quad
            \text{with $S_{ij}: = \jb{S k_j, k_i}_K$}.
\end{aligned}
\end{align}

\noi
It is clear from \eqref{exp_G}-\eqref{exp_G2} that 
$Y_{ij}$ is a centered random variable. 


By Gaussian Poincar\'e inequality \eqref{GPI} in the real case,
we deduce that 

\noi
\begin{align} 
 \E\big[ Y_{ij}^2 \big]
& =  \Var\big( \jb{D F_i, -DL^{-1}F_j }_\fH \big) 
\leq \E\big[  \| D \jb{D F_i, -DL^{-1}F_j }_\fH  \|^2_\fH \big]   \notag \\
&\leq 2  \E\big[  \|  \jb{D^2 F_i, -DL^{-1}F_j }_\fH  \|^2_\fH \big] 
+ 2 \E\big[  \|  \jb{D F_i, -D^2L^{-1}F_j }_\fH  \|^2_\fH \big].
 \label{MSBC3c}
 \end{align}

\noi
For the first term in \eqref{MSBC3c}, we apply the  
Mehler's formula  \eqref{rep4}, Fubini's theorem, 
Minkowski's inequality,
and Jensen's inequality
to obtain 

\noi
\begin{align} 
\begin{aligned} 
 \E\big[  \|  \jb{D^2 F_i, -DL^{-1}F_j }_\fH  \|^2_\fH \big] 
&=  \E\bigg(   \Big\|   \big\langle  D^2 F_i,  \int_0^\infty e^{-t} P_t DF_j \, dt    
\big\rangle_\fH  \Big\|^2_\fH     \bigg) \\
&=   \E\bigg(   \Big\|   \int_0^\infty e^{-t}    \big\langle  D^2 F_i, P_t DF_j   \big\rangle_\fH
  \, dt   \Big\|^2_\fH     \bigg) \\
  &\leq   \E\bigg(   \Big\|   \int_0^\infty e^{-t}  \big\|   \langle  D^2 F_i, P_t DF_j    \rangle_\fH \big\|^2_\fH
  \, dt        \bigg),
 \end{aligned}
  \label{MSBC3d}
 \end{align}

\noi
and next, we obtain from Parseval's identity that

\noi
\begin{align} 
\begin{aligned} 
\sum_{i,j\geq 1}  \big\|   \langle  D^2 F_i, P_t DF_j    \rangle_\fH \big\|^2_\fH 
&= \sum_{i,j, \ell\geq 1}    \langle  D^2 F_i,  h_\ell \otimes P_t DF_j    \rangle_{\fH^{\otimes 2}} ^2 \\
&=   \big\|   \langle  D^2 F,   P_t DF    \rangle_{\fH}   \big\|_{\fH\otimes   K^{\otimes 2}} ^2 \\
&\leq \| D^2F\|^2_{\fH\otimes K \to \fH}  \cdot \| P_tDF \|^2_{\fH\otimes K},
 \end{aligned}
  \label{MSBC3e}
 \end{align}

\noi
 where we view $D^2F$ as an operator mapping $\fH\otimes K$ to $\fH$
 with the corresponding operator norm 
 denoted by $ \| D^2F\|^2_{\fH\otimes K \to \fH} $.
 In the same way, we can bound the second term in \eqref{MSBC3c}
 as follows:
 \begin{align}   \label{MSBC3f}
 \sum_{i,j\geq 1}  \big\|   \langle  D F_i,  -D^2L^{-1} F_j    \rangle_\fH \big\|^2_\fH 
 \leq  \frac{1}{4} \| P_t D^2F\|^2_{\fH\otimes K \to \fH}  \cdot \| DF \|^2_{\fH\otimes K}.
 \end{align}

Hence, we can derive  the following second-order Gaussian Poincar\'e
inequalities from 
\eqref{MSBC3}, \eqref{MSBC3b}, \eqref{MSBC3c}, \eqref{MSBC3d},
and \eqref{MSBC3e}-\eqref{MSBC3f} with Cauchy-Schwarz inequality and  \eqref{OU1}.

\begin{theorem} \label{thm1}
Let the assumptions in Proposition \ref{prop:BC} hold.
Then,
$$
d_2(F, Z) \leq  \sqrt[4]{ \E\big[  \| D^2F\|^4_{\fH\otimes K \to \fH}  \big]}
\sqrt[4]{   \E\big[  \| DF \|^4_{\fH\otimes K} \big] },
$$

\noi
where  $ \| \bul\|_{\fH\otimes K \to \fH}$ denotes the operator norm from  $\fH\otimes K$ to $\fH$.
\end{theorem}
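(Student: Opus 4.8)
The plan is to assemble the pieces already laid out in \eqref{MSBC2}--\eqref{MSBC3f}. By \eqref{MSBC2} together with the standing reduction to the case where $F$ and $Z$ share a common covariance operator $S$, it suffices to bound $\tfrac12\|\langle DF,-DL^{-1}F\rangle_\fH-S\|_{L^2(\O;\mathrm{HS})}$. First I would expand the Hilbert--Schmidt norm in the fixed orthonormal basis $\{k_i\}$ of $K$ as in \eqref{MSBC3}--\eqref{MSBC3b}, so that the square of this quantity equals $\E\sum_{i,j\ge1}Y_{ij}^2$, where $Y_{ij}=\langle DF_i,-DL^{-1}F_j\rangle_\fH-S_{ij}$ is centered by \eqref{exp_G2}.

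Next I would bound $\E[Y_{ij}^2]=\Var(\langle DF_i,-DL^{-1}F_j\rangle_\fH)$ using the scalar Gaussian Poincar\'e inequality \eqref{GPI} and the Leibniz rule for $D$, which produces the two terms in \eqref{MSBC3c}. For the first term I would insert Mehler's formula $-DL^{-1}F=\int_0^\infty e^{-t}P_tDF\,dt$ from \eqref{rep4}, apply Jensen's inequality against the probability measure $e^{-t}\,dt$ on $[0,\infty)$ (this is \eqref{MSBC3d}), then sum over $i,j$ and invoke Parseval's identity to produce the operator norm $\|D^2F\|_{\fH\otimes K\to\fH}$ and the factor $\|P_tDF\|_{\fH\otimes K}$ as in \eqref{MSBC3e}. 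A Cauchy--Schwarz inequality under $\E$, followed by the contraction estimate \eqref{OU1} (with $p=4$, applied on the Hilbert space $\fH\otimes K$), then bounds the remaining $t$-integral by $\sqrt{\E[\|D^2F\|^4_{\fH\otimes K\to\fH}]}\,\sqrt{\E[\|DF\|^4_{\fH\otimes K}]}$. The second term is handled identically using the analogous representation $-D^2L^{-1}F=\int_0^\infty e^{-2t}P_tD^2F\,dt$ together with \eqref{MSBC3f}; the weight $e^{-2t}$ has total mass $\tfrac12$, which only contributes the harmless constant $\tfrac14$ appearing in \eqref{MSBC3f}. Collecting the estimates yields $\E\sum_{i,j}Y_{ij}^2\le C\sqrt{\E[\|D^2F\|^4_{\fH\otimes K\to\fH}]}\,\sqrt{\E[\|DF\|^4_{\fH\otimes K}]}$ with an absolute constant $C\le4$, and taking fourth roots (after the factor $\tfrac12$) gives the asserted bound, since $\tfrac12\sqrt C\le1$.

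The main, though essentially routine, obstacles are bookkeeping ones: (a) making precise the identification of $D^2F\in L^2(\O;\fH^{\odot2}\otimes K)$ with a bounded operator $\fH\otimes K\to\fH$, and checking that the Parseval expansions \eqref{MSBC3e}--\eqref{MSBC3f} are indexed consistently with this identification (and that the contraction $\langle DF_i,-D^2L^{-1}F_j\rangle_\fH$ lands in $\fH$ exactly as the bound expects); (b) justifying the interchanges of summation, expectation, and the $t$-integrals, where the hypothesis $\E[\|DF\|^4_{\fH\otimes K}]<\infty$ and the finiteness recorded after \eqref{BC1} are used; and (c) keeping track of the numerical constants across the two Jensen/Cauchy--Schwarz steps so that the final coefficient does not exceed $1$. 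No idea beyond those already displayed in \eqref{MSBC3}--\eqref{MSBC3f} is needed to complete the argument.
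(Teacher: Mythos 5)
Your proposal is correct and follows essentially the same route as the paper, which derives Theorem \ref{thm1} precisely by combining \eqref{MSBC3}--\eqref{MSBC3f} with Cauchy--Schwarz and \eqref{OU1}; your constant tracking ($C\le 4$, in fact $2+\tfrac12$, so $\tfrac12\sqrt{C}\le 1$) matches what the displayed chain yields. The bookkeeping points you flag (the operator identification of $D^2F$, interchange of sum/expectation/$t$-integral, and the contraction property applied to the operator norm of $P_tD^2F$) are handled at the same level of detail in the paper itself.
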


In what follows, we present an inequality that relates the operator norm 
$ \| D^2F\|^2_{\fH\otimes K \to \fH} $ with the contractions.
The following lemma generalizes Lemma 5.3.9 of \cite{NP12}
 to the Hilbert-valued setting; see also \cite{NPR09}.
 For the sake of completeness, we present a proof.

\begin{lemma}[Random contraction inequality] \label{RCI}
Suppose $F\in\mathbb D^{2,2}(K)$ with $\E\big[ \| DF\|^4_{\fH\otimes K} \big] <\infty$.
Then, 

\noi
\begin{align}
\| D^2F\|^4_{\fH\otimes K \to \fH} 
&\leq  \sup_{\|k \|_K =1}    \big\| \jb{ D^2  F \otimes_1  D^2F,  k\otimes k }_{K^{\otimes 2} } \big\|^2_{\fH^{\otimes 2}}                   \label{RCI1a}  \\
& \leq  \| D^2 F \otimes_1 D^2F \big\|^2_{\fH^{\otimes 2} \otimes K^{\otimes 2}},    \label{RCI1b}
\end{align}

\noi
where  the contraction  $\otimes_1$ is defined on the Hilbert space $\fH$.

\end{lemma}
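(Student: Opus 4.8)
The plan is to unpack the operator norm $\| D^2F\|_{\fH\otimes K \to \fH}^2$ by testing against an optimal unit vector $u \in \fH\otimes K$, and then to express the resulting quantity via a contraction. Write $A := D^2F \in \fH^{\odot 2}\otimes K$, viewed as a bilinear map $\fH\otimes K \to \fH$. By definition of the operator norm,
\[
\| A \|_{\fH\otimes K\to\fH}^2 = \sup_{\|u\|_{\fH\otimes K}=1} \| A u \|_\fH^2,
\]
and the supremum can be restricted to rank-one tensors $u = h\otimes k$ with $\|h\|_\fH = \|k\|_K = 1$ by a standard density/extremality argument (the operator norm of a bounded map out of a tensor product Hilbert space is attained, or approached, on simple tensors because $\|Au\|_\fH^2$ is a quadratic form in $u$ whose maximum over the unit sphere can be analyzed coordinatewise; alternatively one observes that $\sup_{u}\|Au\|^2 = \sup_{v\in\fH, \|v\|=1}\|A^* v\|^2_{\fH\otimes K}$ and iterates). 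Then $\| A(h\otimes k)\|_\fH^2 = \langle A(h\otimes k), A(h\otimes k)\rangle_\fH$, and expanding $A(h\otimes k) = \langle A, h\otimes k\rangle_{\fH\otimes K}$ (contracting one $\fH$-slot against $h$ and the $K$-slot against $k$) lets me recognize the square as a quantity built from the $\otimes_1$ contraction of $A$ with itself in the $\fH$-variable, evaluated against $k\otimes k$ in the $K^{\otimes 2}$-variables.

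For the first inequality \eqref{RCI1a}: fixing unit $k\in K$ and taking the supremum over unit $h\in\fH$ of $\|A(h\otimes k)\|_\fH^2$, I Cauchy–Schwarz in $h$ (or use that $\sup_{\|h\|=1}\langle Bh, Bh\rangle = \|B^*B\|_{\rm op} \le \|B^*B\|_{\rm HS}$) to pull the $h$-dependence inside; the bilinear-form-in-$h$ structure of $\langle A(h\otimes k), A(h\otimes k)\rangle_\fH$ is exactly $\langle (A\otimes_1 A)(k\otimes k\, ;\, \cdot), h\otimes h\rangle$ up to the symmetrization already present because $D^2F$ is symmetric in its two $\fH$-arguments. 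Squaring and bounding $\sup_{\|h\|=1}|\langle T, h\otimes h\rangle|^2 \le \|T\|_{\fH^{\otimes 2}}^2$ for the relevant $T = \langle A\otimes_1 A, k\otimes k\rangle_{K^{\otimes 2}}$ then yields \eqref{RCI1a} after taking the supremum over unit $k$ and raising to the appropriate power (note the left side is a fourth power, matching the squared HS-norm on the right). For the second inequality \eqref{RCI1b}: the supremum over unit $k\in K$ of $\|\langle A\otimes_1 A, k\otimes k\rangle_{K^{\otimes 2}}\|^2_{\fH^{\otimes 2}}$ is bounded by $\sup_{\|k\|=1} \|A\otimes_1 A\|_{\fH^{\otimes 2}\otimes K^{\otimes 2}}^2 \|k\otimes k\|_{K^{\otimes 2}}^2 = \|A\otimes_1 A\|_{\fH^{\otimes 2}\otimes K^{\otimes 2}}^2$, again by Cauchy–Schwarz (contracting the $K^{\otimes 2}$-slots against $k\otimes k$ is a bounded functional of operator norm $\le 1$ when $\|k\|=1$).

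The main obstacle I anticipate is the bookkeeping for the reduction to simple tensors in the operator norm and, relatedly, making precise the identification of $\|A(h\otimes k)\|_\fH^2$ with the contraction expression — one has to be careful about which slots of $D^2F \in \fH^{\otimes 2}\otimes K$ get contracted against $h$ versus $k$, and about the fact that $D^2F$ is symmetric only in the two $\fH$-arguments, not across $\fH$ and $K$. The cleanest route is probably to fix the orthonormal bases $\{h_j\}$ of $\fH$ and $\{k_i\}$ of $K$, write $A = \sum_{i}\sum_{j,j'} a_{i,j,j'}\, h_j\otimes h_{j'}\otimes k_i$ with $a_{i,j,j'} = a_{i,j',j}$, compute everything in coordinates — $A(h\otimes k)$, its $\fH$-norm-squared, and $A\otimes_1 A$ — and then reassemble; this turns both inequalities into two applications of the elementary fact $\sum_\alpha |\sum_\beta x_{\alpha\beta}y_\beta|^2 \le (\sup_\beta \text{-type bound})$, i.e. finite-dimensional Cauchy–Schwarz, and avoids any delicate functional-analytic argument about attainment of suprema. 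I would then remark that this generalizes \cite[Lemma 5.3.9]{NP12} and \cite{NPR09}, as stated.
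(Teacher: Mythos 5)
Your outline reproduces the paper's proof of Lemma \ref{RCI} step for step: fix a unit $k\in K$, contract to get $A_k=\jb{D^2F,k}_K\in\fH^{\odot 2}$, apply the scalar random contraction inequality \cite[Lemma 5.3.9]{NP12} to $A_k$ (giving $\|A_k\|^4_{\fH\to\fH}\le\|A_k\otimes_1A_k\|^2_{\fH^{\otimes2}}=\|\jb{D^2F\otimes_1D^2F,k\otimes k}_{K^{\otimes2}}\|^2_{\fH^{\otimes2}}$), then Cauchy--Schwarz in $k$. Those steps are fine. The genuine gap is exactly the step you flag as ``the main obstacle'' and then dispose of in a parenthesis: the reduction of $\|D^2F\|_{\fH\otimes K\to\fH}=\sup_{\|u\|_{\fH\otimes K}=1}\|\jb{D^2F,u}_{\fH\otimes K}\|_\fH$ to a supremum over simple tensors $u=h\otimes k$. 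Neither of your justifications works: a quadratic form on the unit sphere of a tensor product is in general \emph{not} maximized, nor approached, on simple tensors, and the adjoint identity $\|T\|_{\rm op}=\sup_{\|v\|_\fH=1}\|T^*v\|_{\fH\otimes K}$ converts the problem into a supremum over the single space $\fH$, so there is nothing to ``iterate'' and no simple tensors appear. The reduction is in fact false. Take $\fH=K=\R^2$ and
\begin{align*}
D^2F=\tfrac{1}{\sqrt2}\big(h_1\otimes h_1\otimes k_1+h_1\otimes h_2\otimes k_2+h_2\otimes h_1\otimes k_2\big)
\end{align*}
(take $F=I_2(f)$ with $f$ equal to half of this kernel, so $D^2F$ is deterministic and symmetric in its two $\fH$-slots). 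Then $\|D^2F\|^2_{\fH\otimes K\to\fH}=1$, attained at $u=(h_1\otimes k_1+h_2\otimes k_2)/\sqrt2$, whereas $\sup\{\|\jb{D^2F,h\otimes k}_{\fH\otimes K}\|^2_\fH:\|h\|_\fH=\|k\|_K=1\}=2/3$. So your chain only controls the strictly smaller simple-tensor supremum; worse, in this example $\sup_{\|k\|_K=1}\|\jb{D^2F\otimes_1D^2F,k\otimes k}_{K^{\otimes2}}\|^2_{\fH^{\otimes2}}=1/2<1$, so the intermediate bound \eqref{RCI1a} is itself false for the genuine operator norm. (To be fair, the paper's own write-up asserts the same equality with the simple-tensor supremum without proof, so you have faithfully reproduced its weakest point.)

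The outer inequality \eqref{RCI1b} --- the only one used downstream through \eqref{MSBC3e} and Theorem \ref{thm2}, which do require the genuine operator norm --- is nevertheless true, and it has a short proof that bypasses simple tensors entirely and is the honest Hilbert-valued analogue of \cite[Lemma 5.3.9]{NP12}. Write $T=D^2F:\fH\otimes K\to\fH$ and use
\begin{align*}
\|T\|^4_{\fH\otimes K\to\fH}=\|T^*T\|^2_{\rm op}\le\|T^*T\|^2_{\rm HS},
\end{align*}
then note that in coordinates $D^2F=\sum_{j,j',i}a_{jj'i}\,h_j\otimes h_{j'}\otimes k_i$ with $a_{jj'i}=a_{j'ji}$ one has $\jb{T^*T(h_p\otimes k_i),h_q\otimes k_{i'}}_{\fH\otimes K}=\sum_j a_{jpi}\,a_{jqi'}$, which is precisely the coefficient of $h_p\otimes h_q\otimes k_i\otimes k_{i'}$ in $D^2F\otimes_1D^2F$; hence $\|T^*T\|_{\rm HS}=\|D^2F\otimes_1D^2F\|_{\fH^{\otimes2}\otimes K^{\otimes2}}$. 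I would prove \eqref{RCI1b} this way and either drop \eqref{RCI1a} or restate it with the simple-tensor supremum on its left-hand side.
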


\begin{theorem} \label{thm2}
Let the assumptions in Proposition \ref{prop:BC} hold.
Then,

\noi
\begin{align}\label{2ndGPI}
d_2(F, Z) \leq  \sqrt[4]{ \E\big[    \| D^2 F \otimes_1 D^2F \big\|^2_{\fH^{\otimes 2} \otimes K^{\otimes 2}}   \big]}
\sqrt[4]{   \E\big[  \| DF \|^4_{\fH\otimes K} \big] }.
\end{align}

\end{theorem}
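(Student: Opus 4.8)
The plan is to deduce Theorem \ref{thm2} directly from Theorem \ref{thm1} and the random contraction inequality of Lemma \ref{RCI}; no new analytic input is required. Since $D^2F$ appears on the right-hand side of \eqref{2ndGPI}, we may assume $F\in\mathbb D^{2,2}(K)$ — otherwise $D^2F$ is undefined and the bound is understood to be vacuous, exactly as in Theorem \ref{thm1}, whose right-hand side also involves $D^2F$. Together with the hypothesis $\E\big[\|DF\|^4_{\fH\otimes K}\big]<\infty$ from Proposition \ref{prop:BC}, this ensures that $D^2F$, the $1$-contraction $D^2F\otimes_1 D^2F$, and all the expectations below are well defined and finite, by the same contraction bookkeeping used to obtain Theorem \ref{thm1}.

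The execution is then essentially two lines. Theorem \ref{thm1} gives
\begin{equation*}
d_2(F,Z)\ \leq\ \sqrt[4]{\E\big[\|D^2F\|^4_{\fH\otimes K\to\fH}\big]}\ \sqrt[4]{\E\big[\|DF\|^4_{\fH\otimes K}\big]},
\end{equation*}
so it suffices to control the first factor. By the second inequality \eqref{RCI1b} in Lemma \ref{RCI}, we have the almost-sure pointwise bound
\begin{equation*}
\|D^2F\|^4_{\fH\otimes K\to\fH}\ \leq\ \big\|D^2F\otimes_1 D^2F\big\|^2_{\fH^{\otimes 2}\otimes K^{\otimes 2}}.
\end{equation*}
Taking expectations preserves the inequality, and since $x\mapsto x^{1/4}$ is nondecreasing on $[0,\infty]$,
\begin{equation*}
\sqrt[4]{\E\big[\|D^2F\|^4_{\fH\otimes K\to\fH}\big]}\ \leq\ \sqrt[4]{\E\big[\big\|D^2F\otimes_1 D^2F\big\|^2_{\fH^{\otimes 2}\otimes K^{\otimes 2}}\big]}.
\end{equation*}
Substituting this into the bound from Theorem \ref{thm1} yields exactly \eqref{2ndGPI}, which completes the proof.

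Accordingly, there is no genuine obstacle in the proof of Theorem \ref{thm2} itself; the real work sits in Lemma \ref{RCI} (and, upstream, in the derivation of Theorem \ref{thm1} from Proposition \ref{prop:BC} via the smart-path computations). If one wanted a self-contained account, the step requiring care would be Lemma \ref{RCI}, the Hilbert-valued analogue of Lemma 5.3.9 in \cite{NP12}: for fixed $\omega$ one views $D^2F$ as a bounded operator $\fH\otimes K\to\fH$, tests it against unit tensors $h\otimes k$ with $h\in\fH$ and $k\in K$, applies the Cauchy--Schwarz inequality in the $\fH$-variable to identify the result with $\big\|\jb{D^2F\otimes_1 D^2F,\,k\otimes k}_{K^{\otimes 2}}\big\|_{\fH^{\otimes 2}}$ (this gives \eqref{RCI1a}), and finally bounds the supremum over $\|k\|_K=1$ by expanding in an orthonormal basis of $K^{\otimes 2}$ to reach \eqref{RCI1b}. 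Granting Lemma \ref{RCI} and Theorem \ref{thm1}, Theorem \ref{thm2} is immediate.
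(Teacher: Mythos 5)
Your proposal is correct and follows exactly the paper's intended route: Theorem \ref{thm2} is obtained by inserting the almost-sure random contraction inequality \eqref{RCI1b} of Lemma \ref{RCI} into the bound of Theorem \ref{thm1} and taking expectations. Your remark that the hypothesis $F\in\mathbb D^{2,2}(K)$ is implicitly required (as in Lemma \ref{RCI}) is a fair and accurate reading of the paper.
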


\begin{proof}[Proof of Lemma \ref{RCI}]
The random contraction inequalities \eqref{RCI1a}-\eqref{RCI1b}
are   easy consequences of those for real random variable 
(see \cite[Lemma 5.3.9]{NP12}) and some elementary functional analysis. 
Put 
$$
A_k( h) :=  \langle D^2F, h\otimes k \rangle_{\fH\otimes K} 
=  \langle  \jb{ D^2 F, k }_K,   h  \rangle_{\fH} 
$$
for $h\in\fH$ and $k\in K $. Then, 

\noi
\begin{align*}
\| D^2F\|^4_{\fH\otimes K \to \fH} 
&=  \sup\{  \|  A_k( h)    \|^4_\fH \, : \,  \| h\|_\fH =1, \| k\|_K = 1 \} \\
&=\sup_{\| k\|_K =1} \|  A_k \|_{\fH\to\fH}^4 
\leq \sup_{\| k\|_K =1}  \big\| \jb{D^2  F, k }_K \otimes_1 \jb{ D^2 F, k }_K \big\|^2_{\fH^{\otimes 2}},
\end{align*}

\noi
where the last step follows from Lemma 5.3.9 in \cite{NP12}.
Observe that

\noi
\begin{align*}
 \big\| \jb{D^2  F, k }_K \otimes_1 \jb{ D^2 F, k }_K \big\|^2_{\fH^{\otimes 2}}
&=  \big\| \langle D^2  F \otimes_1    D^2 F,  k\otimes k \rangle_{K^{\otimes 2}} \big\|^2_{\fH^{\otimes 2}} \\
&\leq \big\| D^2  F \otimes_1    D^2 F \big\|^2_{\fH^{\otimes 2}\otimes K^{\otimes 2}},
\end{align*}

\noi
which concludes  our proof. \qedhere

\end{proof}

\section{Main results and applications}\label{SEC3}

In the spirit of the work \cite{Vid20} by the first author, 
we first present in Section \ref{SEC31} an improved version of the second-order 
Gaussian Poincar\'e inequality \eqref{2ndGPI}
in the particular setting that 
the Hilbert spaces $\fH = L^2( A, \mathscr{B}(A),  \mu )$ and $K=L^2( E, \mathscr{B}(E),  \nu )$,
where $A$ and $E$ are Polish spaces equipped with their Borel
$\s$-algebras and the measures $\mu$ and $\nu$ are positive, $\s$-finite
without any atom (i.e., $\nu(\{x\})=0$ for any $x\in A$).
 
We would like to point it out that 
 there could be 
other representation of $\fH$ that can lead to a different form
of the improved second-order Gaussian Poincar\'e inequality.
Motivated by applications in stochastic partial differential equations
(e.g.,  \cite[Proposition 1.8]{BNQSZ}), 
we present in Section \ref{SEC32} another variant of
the functional second-order Gaussian Poincar\'e inequality.

\subsection{When $\fH$ is a $L^2$ space} \label{SEC31}

Let us first fix a few more notations. 
When $\fH = L^2(A, \mathscr{B}(A),  \mu)$, the Malliavin derivative 
$DF$ of a real-valued Malliavin differentiable random variable
is an random element in $ L^2(A, \mathscr{B}(A),  \mu)$:

\noi
\begin{align}\label{conv2}
x\in A \mapsto D_xF \in \R.
\end{align}

\noi
Similarly, we write 

\noi
\begin{align}\label{conv3}
(x, y)\in A^2 \mapsto D^2_{x, y} G
\end{align}

\noi
to denote the second-order Malliavin derivative of $G$,
whenever it is   defined. 
Also, we write

\noi
\begin{align}\label{conv4}
(D^2F \otimes_1 D^2G)(x, y) 
:= \int_A  (D^2_{x, z} F) (D^2_{y, z} G)   \mu(dz),
\end{align}
which is consistent with  \eqref{contra1}.

Throughout  this Section \ref{SEC31},  $\fH=L^2(A,\mathscr{B}(A),\mu)$ and $K=L^2(E, \mathscr{B}(E),  \nu )$.

\begin{theorem}\label{thm_imp1}
Let $Z$ be a centered
Gaussian random variable on $K$ with covariance operator $S$ 
and let $F \in L^2_\o K$ have the covariance operator $S_F$.
Then, 
 
 \noi
\begin{align}\label{mb2}
\begin{aligned}
d_2(F, Z)&\le  \frac{\sqrt{3}}{2}
\bigg( \int_{E^2}  \nu(dr_1) \nu(dr_2) 
   \int_{A^2}  \big\| D D_xF(r_1) \otimes_1 D D_zF(r_1) \big\|_{L^2(\O)}    \\
 &\qquad \times   \big\|   D_x F(r_2)      D_z F(r_2)   \big\|_{L^2(\O)} \,\mu(dx)  \mu(dz)\bigg)^{\frac{1}{2}} 
  + \frac{1}{2} \| S_F - S_Z \|_{\rm HS}
\end{aligned}
\end{align}
and moreover, we also have 

 \noi
\begin{align}\label{mb1}
\begin{aligned}
d_2(F, Z)&\le  \frac{\sqrt{3}}{2}
\bigg( \int_{E^2}  \int_{A^3} 
 \| D^2_{x, y} F(r_1) \|_{L^4(\Omega)}  \|  D^2_{z, y} F(r_1)  \|_{L^4(\Omega)}   \| D_x F(r_2)  \|_{L^4(\Omega)}  
  \\
    &\qquad  
    \times   \|  D_z F(r_2)  \|_{L^4(\Omega)} 
\mu(dx)\mu(dy)\mu(dz) \nu(dr_1) \nu(dr_2)  \bigg)^{\frac{1}{2}}
 + \frac{1}{2} \| S_F - S_Z \|_{\rm HS}.
\end{aligned}
\end{align}
In particular, when $F$ is a Gaussian random variable, we have 
$d_2(F, Z) \leq  \frac{1}{2} \| S_F - S_Z \|_{\rm HS}$.

\end{theorem}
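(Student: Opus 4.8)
The plan is to start from the triangle-inequality decomposition \eqref{MSBC2}, so that it suffices to control $\frac12\big\| \jb{DF, -DL^{-1}F}_\fH - S_F \big\|_{L^2(\O;\mathrm{HS})}$ by the first (triple/double-integral) term on the right-hand side of \eqref{mb1} (respectively \eqref{mb2}). First I would reduce to the case $F\in\mathbb D^{2,2}(K)$ with $\E[\|DF\|^4_{\fH\otimes K}]<\infty$ by a standard approximation/truncation argument on the chaos expansion \eqref{chaos2}; if $F$ fails this integrability the right-hand sides are $+\infty$ and there is nothing to prove. Under this reduction, combining Theorem \ref{thm2} applied with the common covariance operator $S=S_F$ together with \eqref{MSBC2} gives
\begin{align*}
d_2(F,Z)\le \sqrt[4]{\E\big[\|D^2F\otimes_1 D^2F\|^2_{\fH^{\otimes2}\otimes K^{\otimes2}}\big]}\,\sqrt[4]{\E\big[\|DF\|^4_{\fH\otimes K}\big]}+\tfrac12\|S_F-S_Z\|_{\mathrm{HS}},
\end{align*}
so the task becomes to upper bound the product of the two fourth-root factors by the displayed $L^4(\O)$-norm integrals with the constant $\sqrt3/2$.

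The key computation is to unfold the $\fH=L^2(A,\mu)$ and $K=L^2(E,\nu)$ norms into iterated integrals using \eqref{conv2}–\eqref{conv4}. For the first factor I would write, using Minkowski's integral inequality (moving $\|\cdot\|_{L^2(\O)}$ inside the $\mu\otimes\mu\otimes\nu\otimes\nu$ integral defining $\|D^2F\otimes_1 D^2F\|^2_{\fH^{\otimes2}\otimes K^{\otimes2}}$) and then Cauchy–Schwarz in $\O$,
\begin{align*}
\E\big[\|D^2F\otimes_1 D^2F\|^2_{\fH^{\otimes2}\otimes K^{\otimes2}}\big]\le \int_{E^2}\!\int_{A^2}\big\|DD_xF(r_1)\otimes_1 DD_zF(r_1)\big\|_{L^2(\O)}\,d\mu(x)\,d\mu(z)\,d\nu(r_1)\,d\nu(r_2)\cdot(\text{similar})
\end{align*}
— more precisely, factorising the integrand $(D^2F\otimes_1 D^2F)(x,z)(r_1,r_2)$ as a product indexed by $r_1$ and $r_2$ and applying Cauchy–Schwarz in $\O$ pointwise in the spatial variables, then using $\|D_xF(r_2)D_zF(r_2)\|_{L^2(\O)}\le \|D_xF(r_2)\|_{L^4(\O)}\|D_zF(r_2)\|_{L^4(\O)}$ (and the analogous bound $\|D^2_{x,y}F(r_1)D^2_{z,y}F(r_1)\|_{L^2(\O)}\le\|D^2_{x,y}F(r_1)\|_{L^4(\O)}\|D^2_{z,y}F(r_1)\|_{L^4(\O)}$ plus one more Cauchy–Schwarz in $y$) to descend from \eqref{mb2} to \eqref{mb1}. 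For the second factor one simply has $\E[\|DF\|^4_{\fH\otimes K}]=\E\big[\big(\int_{E}\int_A (D_xF(r))^2\,d\mu(x)\,d\nu(r)\big)^2\big]$, which by Minkowski again is $\le\big(\int_{E^2}\int_{A^2}\|D_xF(r_1)D_zF(r_2)\|_{L^2(\O)}\,d\mu\,d\mu\,d\nu\,d\nu\big)$-type expression; after taking fourth roots and multiplying, the two factors recombine into a single square root of one $E^2\times A^3$ (or $E^2\times A^2$) integral, exactly as in \eqref{mb1}–\eqref{mb2}.

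The constant $\sqrt3/2$ is the one genuinely delicate point and deserves care: it does \emph{not} come from the bound of Theorem \ref{thm2} as stated (which carries constant $1$), so I would instead re-derive the estimate directly from \eqref{MSBC3}–\eqref{MSBC3f}, where the factor $2$ in \eqref{MSBC3c} combined with the $\tfrac12$ in \eqref{MSBC} and a more careful bookkeeping of the $\int_0^\infty e^{-t}\,dt=1$ and $\int_0^\infty e^{-t}(\cdot)$ weights — together with the elementary inequality $\sqrt{a}+\sqrt{b}\le\sqrt{3}\sqrt{a+b/2}$ or the AM–GM balancing of the two terms in \eqref{MSBC3c} — produces the $\sqrt3/2$. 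The main obstacle is thus not any deep idea but the bookkeeping: tracking which $L^p(\O)$ norm ($p=2$ versus $p=4$) each factor needs, justifying every application of Minkowski's integral inequality and Fubini (all legitimate under the standing $L^4$ integrability, by the finiteness argument already made in the discussion after Proposition \ref{prop:BC}), and pinning down the numerical constant. Finally, the last sentence is immediate: if $F$ is Gaussian then $F$ lies in the direct sum of $K$ and the first Wiener chaos, so $D^2F=0$, the first term in \eqref{mb1}–\eqref{mb2} vanishes, and $d_2(F,Z)\le\tfrac12\|S_F-S_Z\|_{\mathrm{HS}}$.
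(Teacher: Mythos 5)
Your overall skeleton --- the triangle inequality \eqref{MSBC2}, Mehler's formula with the extra weight $e^{-2t}$ for the $D^2L^{-1}F$ term explaining the factor $3=2+2\cdot\tfrac12$ and hence the constant $\tfrac{\sqrt3}{2}$, the descent from \eqref{mb2} to \eqref{mb1} by pointwise Cauchy--Schwarz, and the Gaussian case via $D^2F=0$ --- all match the paper. But the central reduction you announce is an inequality in the wrong direction. You propose to bound the product $\sqrt[4]{\E[\|D^2F\otimes_1D^2F\|^2]}\cdot\sqrt[4]{\E[\|DF\|^4]}$ from Theorem \ref{thm2} \emph{above} by the coupled integrals appearing in \eqref{mb1}--\eqref{mb2}. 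That cannot work: \eqref{mb2} and \eqref{mb1} are \emph{improvements} of Theorem \ref{thm2} precisely because they keep the second-derivative factor at $(x,z,r_1)$ paired with the first-derivative factor at the \emph{same} spatial points $(x,z)$ before any expectation is taken, whereas Theorem \ref{thm2} has already decoupled the two factors into separate global moments via the operator-norm estimate \eqref{MSBC3e} and a global Cauchy--Schwarz in $\Omega$. In general the coupled integral is strictly smaller (this localization is exactly what produces the rate $T^{-1/2}$ in the Breuer--Major application), so ``recombining the two fourth-root factors into a single square root of one $E^2\times A^3$ integral'' is the step that fails. Your fallback --- re-deriving the constant from \eqref{MSBC3}--\eqref{MSBC3f} --- does not repair this, because those displays still pass through $\|D^2F\|_{\fH\otimes K\to\fH}$ and $\|P_tDF\|_{\fH\otimes K}$, i.e.\ they also decouple the factors.

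The missing idea is to bypass Theorems \ref{thm1} and \ref{thm2} altogether: apply the Gaussian Poincar\'e inequality \eqref{GPI} to the centered $K^{\otimes2}$-valued random variable $Y=\langle DF,-DL^{-1}F\rangle_\fH - S$, expand $\|DY\|^2_{\fH\otimes K^{\otimes2}}$ as an explicit integral over $A^3\times E^2$, and use \eqref{rep4} in the form $-D_xL^{-1}F(r_2)=\int_0^\infty e^{-t}\,\E\big[(D_xF(r_2))^t\mid W\big]\,dt$, so that after integrating out the contracted variable $y$ the integrand becomes $\big(DD_xF(r_1)\otimes_1 DD_zF(r_1)\big)\cdot P_t\big(D_xF(r_2)D_zF(r_2)\big)$; crucially $P_t$ acts on the \emph{product}, which stays intact. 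Only then does one apply Cauchy--Schwarz in $\Omega$ pointwise in $(x,z,r_1,r_2)$ together with the $L^2(\Omega)$-contractivity of $P_t$, which yields \eqref{mb2} with the constant $\tfrac{\sqrt3}{2}$; \eqref{mb1} then follows by your (correct) further Cauchy--Schwarz in $\Omega$ and in $y$. Your local moves in the middle paragraph are the right ones, but they must be applied to $\|\langle D^2F, P_tDF\rangle_\fH\|^2_{\fH\otimes K^{\otimes2}}$ directly, not to $\E[\|D^2F\otimes_1D^2F\|^2]$ and $\E[\|DF\|^4]$ separately.
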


The bound \eqref{mb1} is cruder than \eqref{mb2}, and it can also be 
derived as a particular case of the bound in Theorem \ref{thm_imp2}.

\begin{proof}[Proof of Theorem \ref{thm_imp1}] Without losing any generality, we assume that $S_F = S_Z = S$.
From \eqref{MSBC}, \eqref{conv1}, and  \eqref{conv1b}, we know that
\begin{align}\label{bound1}
d_2(F, Z) &\leq \frac{1}{2} \big\|  \jb{DF, -DL^{-1}F}_\fH - S \big\|_{L^2(\Omega; K^{\otimes2})}\\
&= \frac{1}{2} \sqrt{\E \[\big\|  \jb{DF, -DL^{-1}F}_\fH - S \big\|_{K^{\otimes2}}^2\] }\notag\,.
\end{align}

\noi
Since $\E \[ \jb{DF, -DL^{-1}F}_\fH\]=S$, 
we can deduce from   the Gaussian Poincar\'e inequality \eqref{GPI}
(applied to $K^{\otimes 2}$-valued random variables)
that

\noi
\begin{align}\label{bound2a}
\begin{aligned}
&\E\[ \big\|  \jb{DF, -DL^{-1}F}_\fH - S \big\|_{K^{\otimes2}}^2\] 
\leq \E\[\big\|  D\jb{DF, -DL^{-1}F}_\fH \big\|_{\fH \otimes K^{\otimes2}}^2\]  \\
&=\E\[\big\|  \jb{D^2F, -DL^{-1}F}_\fH
+\jb{DF, -D^2L^{-1}F}_\fH \big\|_{\fH \otimes K^{\otimes2}}^2\]\\
&\le 2\,  \E\[\big\|  \jb{D^2F, -DL^{-1}F}_\fH
\big\|_{\fH \otimes K^{\otimes2}}^2\]
+2\, \E\[\big\|\jb{DF, -D^2L^{-1}F}_\fH \big\|_{\fH \otimes K^{\otimes2}}^2\].
\end{aligned}
\end{align}

\noi
Recalling \eqref{rep4},  we can first write

\noi
\begin{align}\label{bound2b}
\jb{D^2F, -DL^{-1}F}_\fH
&=\jb{D^2F, \int_0^\infty e^{-t} P_t DF dt}_\fH,
\end{align}

\noi
which is a random vector in $\fH\otimes K^{\otimes 2}$.
For $r_1, r_2\in E$, we can write by using Mehler's formula that 

\noi
\begin{align*}
\jb{D^2F(r_1), -DL^{-1}F(r_2) }_\fH
&=  \int_0^\infty e^{-t} \int_A  D D_xF(r_1)  P_t (D_x F(r_2)) dt  \\
&=   \int_0^\infty e^{-t} \int_A  D D_xF(r_1) \E\big[   [ D_x F(r_2)]^t | W \big] \mu(dx)  dt \\
&=  \int_0^\infty e^{-t} \E\bigg(  \int_A D D_xF(r_1)    [ D_x F(r_2)]^t  \mu(dx)  \big| W \bigg)
dt ,
\end{align*}

\noi
where we used the notation 
$X^t = \Psi(W^t)$
for  $X = \Psi(W)$
 a real-valued random variable that is $\s\{W\}$-measurable. 
Then, using Jensen's inequality and Minkowski's inequality,
we get 

\noi
\begin{align*}
&\big\|  \jb{D^2F, -DL^{-1}F}_\fH  \big\|_{\fH \otimes K^{\otimes2}}^2   \\
& \leq \int_0^\infty  dt \,   e^{-t}  \int_{E^2}  \nu(dr_1) \nu(dr_2)
\E\bigg(  \Big\|  \int_A D D_xF(r_1)    [ D_x F(r_2)]^t  \mu(dx) \Big\|_\fH^2 \big| W \bigg)
\\
&=
 \int_0^\infty  dt \,   e^{-t}  \int_{E^2}  \nu(dr_1) \nu(dr_2) \\
 &\qquad \times \E\bigg(  \int_{A^3} D_y D_xF(r_1)D_y D_zF(r_1)    
   [ D_x F(r_2)]^t   [ D_z F(r_2)]^t  \mu(dx)  \mu(dz) \mu(dy)   \big| W \bigg) \\
  &=  \int_0^\infty  dt \,   e^{-t}  \int_{E^2}  \nu(dr_1) \nu(dr_2) 
   \int_{A^3} D_y D_xF(r_1)D_y D_zF(r_1)    \\ 
 &\qquad \times 
  \E\big(   [ D_x F(r_2)]^t   [ D_z F(r_2)]^t   |W \big) \,\mu(dx)  \mu(dz) \mu(dy) \\
  &=  \int_0^\infty  dt \,   e^{-t}  \int_{E^2}  \nu(dr_1) \nu(dr_2) 
   \int_{A^2}  \big[  D D_xF(r_1) \otimes_1 D D_zF(r_1) \big]    \\
 &\qquad \times  P_t \big(   D_x F(r_2)      D_z F(r_2)   \big) \,\mu(dx)  \mu(dz).
\end{align*}
 
 \noi
Therefore, 
it follows from the Cauchy-Schwarz inequality and \eqref{OU1}  that

\noi
\begin{align*}
&  \E\[\big\|  \jb{D^2F, -DL^{-1}F}_\fH
\big\|_{\fH \otimes K^{\otimes2}}^2\]  \\
&\qquad  \leq 
  \int_{E^2}  \nu(dr_1) \nu(dr_2) 
   \int_{A^2}  \big\| D D_xF(r_1) \otimes_1 D D_zF(r_1) \big\|_{L^2(\O)}    \\
 &\qquad\qquad \times   \big\|   D_x F(r_2)      D_z F(r_2)   \big\|_{L^2(\O)} \,\mu(dx)  \mu(dz).
 \end{align*}
In the same way, we can deal with  
$ \E\big[ \|\jb{DF, -D^2L^{-1}F}_\fH \big\|_{\fH \otimes K^{\otimes2}}^2\big]$
by using $-D^2L^{-1}F = \int_0^\infty e^{-2t} P_t D^2 F dt$ instead of \eqref{rep4}:
\noi
\begin{align*}
&  \E\[\big\|  \jb{D^2F, -DL^{-1}F}_\fH
\big\|_{\fH \otimes K^{\otimes2}}^2\] \\
&\quad 
 \leq  \frac{1}{2} \int_{E^2}  \nu(dr_1) \nu(dr_2) 
   \int_{A^2}  \big\| D D_xF(r_1) \otimes_1 D D_zF(r_1) \big\|_{L^2(\O)}    \\
 &\qquad \times   \big\|   D_x F(r_2)      D_z F(r_2)   \big\|_{L^2(\O)} \,\mu(dx)  \mu(dz).
 \end{align*}
Hence, the desired bound \eqref{mb2} follows and 
the bound  \eqref{mb1} follows  from another application of
 Cauchy-Schwarz inequality. 
\qedhere

 \end{proof}

 \noi
 $\bul$ {\bf Application $\I$: Functional Breuer-Major type CLTs.}  
Let $X=\{X(h): h \in \fH\}$ be an isonormal Gaussian process over the real separable Hilbert space 
$\fH=L^2\(\R,\mathscr{B}(\R),dx\)$. 
Let $\{h_t: t\in\R\}\subset \fH$ be
such that 
$$\langle h_t,h_s\rangle_\fH  = \int_\R h_t(x) h_s(x)  dx
=:\varrho(t-s)$$ with $\varrho(0)=1$ 
 and 

\noi
\begin{align}\label{BMc1}
\int_\R|\varrho(t)|  dt< \infty.
\end{align}

\noi
Define
$Y_t=X(h_t)$ for   $t\in\R$. 
Then, $\{Y_t\}_{t\in\R}$
 is a centered Gaussian process,
 and  
 by stationarity, we easily see that 
 $Y$ is continuous if and only if 
  $\rho$ is continuous at zero. 
Without further specification, we will stick to  the following  assumption that 
 \begin{center}
the mapping $(\omega, t)\in\Omega \times\R\mapsto Y_t(\omega)\in\R$ is jointly measurable. 
  \end{center}

Let $f:\mathbb{R}\rightarrow\mathbb{R}$ be   of class $C^2$ 
such that  
$f(N)$ and $ | f''(N)|^4$ have finite expectations
  with $N\sim\mathcal{N}(0,1)$.\footnote{This
 implies that both  $f(N)$ and $f'(N)$ have finite fourth moments by a general  
 Gaussian  Poincar\'e inequality; see, e.g., \cite[Proposition 3.1]{NPR09}. 
 } 
Next,  we define  

\noi
\begin{align}\label{def_FR}
F_T(r):=\frac{1}{\sqrt{T}} \int_{-rT}^{rT} \big(  f (Y_t )-\E [f (Y_t ) ] \big)\,dt,   \,\,  r\in[0,1],
\end{align}
and we view $F_T$ as a random element of $K = L^2([0,1])$.
Let us first explain the normalization $\frac{1}{\sqrt{T}}$ in \eqref{def_FR},
which is quite standard in the context of Breuer-Major theorem; see, e.g., 
\cite[Chapter 7]{NP12}.
Since $f$ is square-integrable with respect to the standard Gaussian measure on $\R$, 
we have the following Hermite expansion
$$
f (Y_t )-\E [f (Y_t ) ] = \sum_{q=1}^\infty c_q H_q(Y_t),
$$

\noi
where $H_q(x) = (-1)^q e^{\frac{x^2}{2}} \tfrac{d^q}{dx^q}( e^{-\frac{x^2}{2}}) $ denotes
the $q$-th Hermite polynomial and $c_q = \tfrac{1}{q!} \E[ f(N) H_q(N) ]$ for $q\in\N_{\geq 0}$.\footnote{The first few 
Hermite polynomials are given by $H_0(x) =1$, $H_1(x) =x,$ $H_2(x) = x^2-1$, 
and $H_{p+1}(x) =  xH_p(x) - p H_{p-1}(x)$ for $p\in\N_{\geq 2}$.
Note that $\{ \tfrac{1}{\sqrt{q!}} H_q: q\in \N_{\geq 0} \}$ is 
an orthonormal basis for $L^2(\R,  \tfrac{1}{\sqrt{2\pi} } e^{-\frac{x^2}{2}} dx )$.} Then,
using orthogonality relation of Hermite polynomials (see, e.g., \cite[Proposition 1.4.2]{NP12}), we write 

\noi
\begin{align}
C_T(r_1, r_2):= \E\[F_T(r_1)F_T(r_2)\]
&=\frac{1}{T}  \int_{-r_1T}^{r_1T}  \int_{-r_2T}^{r_2T} 
\Cov\big[  f (Y_t )   ,     f (Y_s )   \big] dt ds \notag \\
&= \frac{1}{T}  \int_{-r_1T}^{r_1T}  \int_{-r_2T}^{r_2T} 
  \sum_{q=1}^\infty c_q^2  q! \rho^q(t-s)   dt ds   \notag  \\
  &\xrightarrow{T\to+\infty} 2  \min\{ r_1, r_2\}   \sum_{q=1}^\infty c_q^2  q! \int_\R \rho^q(t)   dt \notag \\
  & = : C_\infty(r_1, r_2), \label{limc1}
\end{align}
which follows from the dominated convergence theorem with \eqref{BMc1} and the fact that 
$|\rho(t)| \leq 1$. Alternatively, we can rewrite the above limit \eqref{limc1} as

\noi
\begin{align}\label{limc2}
C_\infty(r_1, r_2) = 2  \min\{ r_1, r_2\}   \int_\R \Cov\big[  f (Y_t )   ,     f (Y_0 )   \big] dt. 
\end{align}
Taking $r_1 = r_2$ in \eqref{limc1} or  \eqref{limc2}, we can see that 
the above integral  $ \int_\R \Cov\big[  f (Y_t )   ,     f (Y_0 )   \big] dt$
is always nonnegative and finite, which is  also stated in \eqref{sigma_def} below. 
It is also easy to see that 
\begin{align}
M_1 =  \int_{\R}  \big| \Cov\big[  f (Y_t )   ,     f (Y_0 )   \big] \big|  dt
 \leq \int_{\R}  
  \sum_{q=1}^\infty c_q^2  q!  | \rho(t) | dt
  = \| \rho\|_{L^1(\R)} \E[ f(Y_0)^2]. \label{COV_int}
\end{align}

\begin{theorem} \label{thm_app1}
Let $F_T$ be defined as in \eqref{def_FR}. 
Assume that 
\begin{equation}\label{condition}
\abs{h_t(x)}\leq g(t-x) ,
\end{equation}

\noi
where 
$g:\R\to\R_+$ is measurable and satisfies the following uniform bound:
 \begin{align}\label{uni_g}
  G_\star:=\sup_{u}\int_{\R}g(t+u)\,dt<\infty .
  \end{align}

\noi
Then, denoting by $\mathcal{B}_T$ a Gaussian random variable on $K$ with 
the same covariance operator as $F_T$, then

\noi
\begin{align}\label{BM_r1}
d_2  \big(  F_T,  \mathcal{B}_T\big) 
\leq 
\frac{C}{\sqrt{T}}, 
\end{align}

\noi
where the constant $C = \big(\sqrt{3}   \| f'(Y_0) \|_{L^4(\O)}   \| f''(Y_0) \|_{L^4(\O)}  G_\star^3    \big) /2$
with $Y_0\sim\NN(0,1)$. 
Let $B$ denote a standard Brownian motion on $[0,1]$.\footnote{We view $B$ 
as a random element
of $K = L^2([0,1])$.} 
Then, 

\noi
\begin{align}\label{BM_r1b}
d_2  \big(  \mathcal{B}_T,  \sigma B\big) \xrightarrow{T\to+\infty} 0,
\end{align}

\noi
where

\noi
\begin{align} \label{sigma_def}
\sigma : =  \bigg(2  \int_\R \Cov\big[  f (Y_t )   ,     f (Y_0 )   \big] dt\bigg)^{\frac12} \in [0,\infty).
\end{align}

\noi
In particular, $F_T$ converges to $\sigma B$ with respect to the $d_2$ metric, 
as $T\to+\infty$.

\end{theorem}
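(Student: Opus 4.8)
The plan is to apply the abstract bound \eqref{mb2} of Theorem~\ref{thm_imp1} with $F=F_T$ and $Z=\mathcal{B}_T$: since $F_T$ is centered and $\mathcal{B}_T$ is a centered Gaussian random variable with the same covariance operator as $F_T$, the correction term $\tfrac12\|S_F-S_Z\|_{\rm HS}$ in \eqref{mb2} vanishes, so it only remains to estimate the weighted integral of $L^2(\O)$-norms appearing there. Then \eqref{BM_r1b} will follow from the last assertion of Theorem~\ref{thm_imp1} (the case when both random variables are Gaussian), and the final statement is immediate from the triangle inequality for $d_2$.

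First I would record the Malliavin derivatives of $F_T$: since $D_xY_t=h_t(x)$ and $f\in C^2$, the chain rule gives, for $r\in[0,1]$,
\begin{align*}
D_xF_T(r)&=\frac{1}{\sqrt T}\int_{-rT}^{rT} f'(Y_t)\,h_t(x)\,dt,\\
D^2_{x,y}F_T(r)&=\frac{1}{\sqrt T}\int_{-rT}^{rT} f''(Y_t)\,h_t(x)\,h_t(y)\,dt,
\end{align*}
and the moment hypotheses on $f$, together with \eqref{condition}--\eqref{uni_g}, ensure $F_T\in\mathbb D^{2,2}(K)$ with $\E\big[\|DF_T\|^4_{\fH\otimes K}\big]<\infty$, so that Theorem~\ref{thm_imp1} does apply. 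Contracting the $\fH$-variable $y$ and using $\langle h_t,h_s\rangle_\fH=\varrho(t-s)$ yields, for $r_1,r_2\in[0,1]$,
\begin{align*}
DD_xF_T(r_1)\otimes_1 DD_zF_T(r_1)&=\frac{1}{T}\int_{-r_1T}^{r_1T}\int_{-r_1T}^{r_1T} f''(Y_t)f''(Y_s)\,h_t(x)h_s(z)\,\varrho(t-s)\,dt\,ds,\\
D_xF_T(r_2)\,D_zF_T(r_2)&=\frac{1}{T}\int_{-r_2T}^{r_2T}\int_{-r_2T}^{r_2T} f'(Y_u)f'(Y_v)\,h_u(x)h_v(z)\,du\,dv.
\end{align*}
Taking $L^2(\O)$-norms via Minkowski's integral inequality, then using Cauchy--Schwarz and stationarity of $Y$ (which gives $\|f''(Y_t)f''(Y_s)\|_{L^2(\O)}\le\|f''(Y_0)\|^2_{L^4(\O)}$, and likewise for $f'$), and finally the pointwise bounds $|h_\bullet(\cdot)|\le g(\bullet-\cdot)$ and $|\varrho(t-s)|\le\int_\R g(t-y)g(s-y)\,dy$, the integrand of \eqref{mb2} is dominated by $T^{-2}\|f''(Y_0)\|^2_{L^4(\O)}\|f'(Y_0)\|^2_{L^4(\O)}$ times a product of six translates of $g$, integrated over $x,y,z\in\R$ and over four time variables ranging over $[-r_1T,r_1T]$ or $[-r_2T,r_2T]$.

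The core step is then a clean iterated estimate of this multiple integral. Integrating out $x$, $y$, $z$ produces convolution-type factors such as $\int_\R g(t-x)g(u-x)\,dx$, and, since $\int_\R g(a-x)\,dx\le G_\star$ for every $a\in\R$ by \eqref{uni_g}, one may then integrate out three of the four time variables, each at the cost of a single factor $G_\star^2$, leaving one time integral over an interval of length $2r_1T$. Integrating finally in $r_1,r_2\in[0,1]$ turns $2r_1T$ into $T$, so against the prefactor $T^{-2}$ the whole expression is at most $T^{-1}\|f''(Y_0)\|^2_{L^4(\O)}\|f'(Y_0)\|^2_{L^4(\O)}G_\star^6$, and \eqref{mb2} gives
\begin{equation*}
d_2(F_T,\mathcal{B}_T)\le\frac{\sqrt3}{2}\Big(\frac1T\,\|f''(Y_0)\|^2_{L^4(\O)}\|f'(Y_0)\|^2_{L^4(\O)}\,G_\star^6\Big)^{1/2}=\frac{C}{\sqrt T},
\end{equation*}
which is \eqref{BM_r1}. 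I expect this bookkeeping to be the one genuinely delicate part: one must order the four time integrations so that each is performed against a factor uniformly bounded in the remaining variables, and one must keep the exact length $2r_1T$ of the last interval (rather than crudely bounding it by $2T$) in order to land on the stated constant $C$ after integrating in $r_1$.

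For \eqref{BM_r1b}, both $\mathcal{B}_T$ and $\sigma B$ are centered Gaussian random variables on $K=L^2([0,1])$, so the last assertion of Theorem~\ref{thm_imp1} gives $d_2(\mathcal{B}_T,\sigma B)\le\tfrac12\|S_{\mathcal{B}_T}-S_{\sigma B}\|_{\rm HS}$. Since $S_{\mathcal{B}_T}$ and $S_{\sigma B}$ are the integral operators on $L^2([0,1])$ with kernels $C_T(r_1,r_2)$ and $\sigma^2\min(r_1,r_2)$, this Hilbert--Schmidt norm equals $\|C_T-C_\infty\|_{L^2([0,1]^2)}$; by \eqref{limc1} we have $C_T\to C_\infty$ pointwise, while \eqref{COV_int} gives the uniform bound $|C_T(r_1,r_2)|\le 2M_1$, so dominated convergence on the finite-measure space $[0,1]^2$ forces $\|C_T-C_\infty\|_{L^2([0,1]^2)}\to0$, hence \eqref{BM_r1b}. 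Finally, the triangle inequality for $d_2$ gives $d_2(F_T,\sigma B)\le d_2(F_T,\mathcal{B}_T)+d_2(\mathcal{B}_T,\sigma B)\le C/\sqrt T+o(1)\to0$, which is the last claim. The main obstacle of the whole argument is the multiple-integral estimate behind \eqref{BM_r1}; everything else is a direct application of Theorem~\ref{thm_imp1} or routine.
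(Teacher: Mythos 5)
Your proposal is correct and follows essentially the same route as the paper: compute $D F_T$ and $D^2 F_T$, reduce everything via \eqref{condition} and stationarity to the six-fold integral of translates of $g$, and bound it by $G_\star^6 T$ through the same iterated use of \eqref{uni_g}, then handle \eqref{BM_r1b} by dominated convergence on the covariance kernels. The only cosmetic difference is that you enter through the sharper bound \eqref{mb2} and then coarsen via $|\varrho(t-s)|\leq\int_\R g(t-y)g(s-y)\,dy$, whereas the paper applies the cruder bound \eqref{mb1} directly; both land on the identical integral $\mathbf{J}$ and the same constant $C$.
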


Note that $F_T$ has an infinite chaos expansion. In the work \cite{BC20},
the authors restricted their consideration to a single chaos (see Theorem 5.1 therein), 
while our Theorem \ref{thm_app1}
illustrates the power of the second-order Gaussian Poincar\'e inequality
and its flexibility in functional approximation.

\begin{proof}[Proof of Theorem \ref{thm_app1}]

In view of the bound \eqref{mb1}, we need to compute the 
$L^4(\Omega)$-norm of $D_x F_T(r)$ and $D^2_{x, y} F_T(r)$.
 Since
\begin{align*}
D_x F_T(r)&=\frac{1}{\sqrt{T}}\int_{-rT}^{rT}\, f'\left(Y_t\right) h_t(x)\,dt\,,\\
D_{x,y}^2 F_T(r)&=\frac{1}{\sqrt{T}}\int_{-rT}^{rT}\, f''\left(Y_t\right) h_t(x) h_t(y)\,dt .
\end{align*}
Then, using \eqref{condition} and Minkowski's inequality, 
we get 

\noi
\begin{align}  \label{D_bdd1}
\begin{aligned}
\|D_x F_T(r) \|_{L^4(\O)}  
&\leq \frac{   \| f'(Y_0) \|_{L^4(\O)}    }{\sqrt{T}}\int_{-rT}^{rT} g(t-x) dt \\
\|D^2_{x, y} F_T(r) \|_{L^4(\O)}  
&\leq \frac{\| f''(Y_0) \|_{L^4(\O)}    }{\sqrt{T}}\int_{-rT}^{rT} g(t-x) g(t-y)  dt ,
\end{aligned}
\end{align}

\noi
where $Y_0\sim\NN(0,1)$.
Therefore, plugging the above estimates \eqref{D_bdd1} into \eqref{mb1}
yields

\noi
\begin{align*}
&d_2(F_T, \mathcal{B}_T)\\
\quad & \leq \frac{\sqrt{3}   \| f'(Y_0) \|_{L^4(\O)}   \| f''(Y_0) \|_{L^4(\O)}      }{2 T} 
\bigg( \int_{[0,1]^2} dr_1 dr_2 \int_{\R^3} dxdydz \int_{-r_1 T} ^{r_1T}  dt_1 g(t_1-x)g(t_1-y)  \\
&\qquad \quad \times
 \int_{-r_1 T} ^{r_1T} dt_2 g(t_2-z)g(t_2-y) 
   \int_{-r_2 T} ^{r_2T} dt_3 g(t_3-x)     
 \int_{-r_2T} ^{r_2T} dt_4 g(t_4-z)   \bigg)^{\frac12} \\
 &=:  \frac{\sqrt{3}   \| f'(Y_0) \|_{L^4(\O)}   \| f''(Y_0) \|_{L^4(\O)}      }{2 T}   \mathbf{J}^{\frac12},
\end{align*}

\noi
where $\mathbf J$ is defined in an obvious manner. 
Next, recalling the uniform bound \eqref{uni_g}, we perform
the integrations in the exact order of 
$dt_3$, $dx$, $dt_1$, $dy$, $dt_2$, $dz$,  and $dt_4$ to get 

\noi
\begin{align*}
\mathbf{J} \leq  \int_{[0,1]^2} dr_1 dr_2  (2r_2 T) G_\star^6 = G_\star^6 T,
\end{align*}

\noi
from which, we can easily obtain the desired bound \eqref{BM_r1}.

Finally, let us show \eqref{BM_r1b}.
 We need to first compute the Hilbert-Schmidt norm of $S_{F_T} - S_{\sigma B} $
 that appears  in \eqref{mb1}.
 Note that in view of \eqref{SX1}, \eqref{SX2}, and  \eqref{HS}, 
 $\| S_{F_T} - S_{\sigma B} \|_{\rm HS} = \| C_T - C_\infty \|_{\rm HS} =  \| C_T - C_\infty \|_{K\otimes K}$
 with $C_T$ and $C_\infty$ as in \eqref{limc1}. 
 Then, 
the dominated convergence theorem implies    \eqref{BM_r1b}. \qedhere

\end{proof}

 \noi
 $\bul$ {\bf Application $\II$: Shallow neural network.}  
In a recent work \cite{BFF24},
Bordino, Favaro, and Fortini applied the second-order Gaussian Poincar\'e 
inequality from \cite{Vid20} to derive quantitative central limit theorems 
for Gaussian neural networks. In this section, we apply our functional version of 
the second-order Gaussian Poincar\'e 
inequality to establish a functional central limit theorem with rate of convergence. 

For an {\bf activation} function $\tau: \R\to\R$,
consider the following (simple) 
  fully connected feed-forward Gaussian shallow  neural network of  width $n$
  with one-dimensional input:\footnote{This neural network is shallow since its depth is one. 
  For relevant quantitative central limit theorem for deep neural network, we refer
  interested readers to the recent work \cite{FHMNP23}}

 \noi
\begin{align}\label{NN1}
f_n(x)
=  \frac{1}{n^{1/2}}\sum_{j=1}^{n}w_j \tau \big( x w^{(0)}_j  \big),
\end{align}
where
\begin{itemize}
\item  $f_n(x)$ is the {\bf output} given the   {\bf input} $x \in \R$;

\item $\Theta=\{w_{j}^{(0)}, w_{j}\}_{j}$ is a collection of \emph{random weights} 
that are  independent and identically distributed as  $\NN(0,\sigma^{2}_{w})$.

\end{itemize}

In \cite[Theorem 4]{BFF24}, Bordino, Favaro, and Fortini considered a particular 
case where  $x =\s^2_w=1$ and $\tau\in C^2(\R)$ such that 
\begin{align}\label{env_cond}
\abs{\frac{d^\ell}{dx^\ell} \tau(x) } \leq a + b |x|^\gamma \qquad \ell=0, 1,2,
\end{align}
for some $a, b, \gamma > 0$. In this setting, they established a 
quantitative central limit theorem for the neural network
when the width tends to infinity:

\noi
\begin{align}\label{eq:thmBFF20}
d_{\rm M}\big(f_n(1), \NN(0, \s^2) \big)\leq \frac{K_{\sigma^{2}}}{n^{1/2}}
\end{align}
for some explicit constant $K_{\sigma^{2}}$, 
where $\s^2 = \E[ \tau^2(Z) ]$ with $Z\sim\NN(0,1)$.
Here, $d_{\rm M}$ stands for any of the distributional metrics 
(i) $1$-Wasserstein distance (ii) Kolmogorov distance (iii) total-variation distance;
see, e.g., \cite[Appendix C]{NP12} for more details on these distributional metrics. 

In the following, we will apply our functional second-order 
Gaussian Poincar\'e inequality to the shallow neural network \eqref{NN1}
with the input $x$ varying in $\R$. Let us first fix the Hilbert space $K = L^2(\R, d\nu)$
(with $\nu$ to be determined),
in which $f_n(\bul)$ lives. Suppose the condition \eqref{env_cond}
holds, in particular, $|\tau(x)| \leq a + b |x|^\gamma$.
Then, we need to impose (sufficient) conditions on $\nu$ such that  $f_n\in L^2(\R, d\nu)$ almost surely. 
For this purpose, let us do some second moment computations:
\begin{align*}
\E \int_\R  |  f_n(x)|^2 \nu(dx)
& =\frac{1}{n} \int_\R   \E \bigg[   \big| \sum_{j=1}^{n}w_j \tau \big( x w^{(0)}_j  \big) \big|^2 \bigg] \nu(dx)
\quad\text{by Fubini} \\
&= \frac{1}{n} \int_\R   \sum_{j=1}^{n}    \E \big[ \tau^2  ( x w^{(0)}_j   ) \big]   \nu(dx) \quad\text{using independence}\\
&= \int_\R     \E \big[ \tau^2  ( x Z  ) \big]   \nu(dx) \quad\text{with $Z\sim\NN(0,1)$} \\
&\leq  2\int_\R  \big( a^2 + b^2 x^{2\gamma} \E[ Z^{2\gamma}] \big) \nu(dx),
\end{align*}
where we used the condition $|\tau(x)| \leq a + b |x|^\gamma$
and the inequality $(x+y)^2 \leq 2x^2 + 2y^2$ in the last step. 
It is then clear that if we assume 
\begin{align}\label{assume_nu}
 \int_{\R} (1+ |x|^{2\gamma})\nu(dx) <\infty,
\end{align}
 we can get $f_n\in L^2(\Omega; K)$.  It is not difficult to get
 
 \noi
 \begin{align}\label{cov_fn}
\mathcal{C}(x,y):= \E[ f_n(x) f_n(y) ] =  \E[ \tau(xZ) \tau(yZ) ],
 \end{align}
 which does not depend on the width $n$, and $\mathcal{C}\in K^{\otimes 2}$
 under the assumption \eqref{assume_nu}.

\begin{theorem} \label{thm_NN}
Assume  \eqref{env_cond} holds for some positive constants $a, b,\gamma$.
Let $\nu$ be a  finite measure on $\R$ without any atom such that 
\begin{align}\label{nu4ga}
 \int_{\R}  |x|^{2\gamma+4} \nu(dx) <\infty.
\end{align}
Let $\{B(x): x\in\R\}$ be a centered Gaussian field with covariance function $\mathcal{C}$
as in \eqref{cov_fn}.
Recall the definition of $f_n$ from \eqref{NN1}. Viewing $f_n$ and $B$ as random elements in $K= L^2(\R, d\nu)$,
we have

\noi
\begin{align}\label{CLT_NN}
d_2(f_n, B)\leq  C n^{-\frac12} ,
\end{align}

\noi
where $C$ is a finite constant that depends on $\nu, a, b,$ and $\gamma$.
 
\end{theorem}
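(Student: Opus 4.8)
The plan is to apply the cruder functional second-order Gaussian Poincar\'e inequality \eqref{mb1} from Theorem \ref{thm_imp1} with $\fH=L^2(\R,\mathscr B(\R), \mathbb{P}_\Theta)$ realized as the $L^2$ space of the Gaussian weights (so that $f_n$ is a functional of an isonormal process built from $\Theta$), $K=L^2(\R, d\nu)$, $F=f_n$, and $Z=B$. Since the covariance $\mathcal C$ in \eqref{cov_fn} does not depend on $n$ and $B$ has exactly this covariance, the term $\tfrac12\|S_{f_n}-S_B\|_{\rm HS}$ vanishes, and we are left to bound the quadruple integral in \eqref{mb1}. So the whole task reduces to: (i) computing the first and second Malliavin derivatives $D_x f_n(u)$ and $D^2_{x,y} f_n(u)$ (here the ``spatial'' variable is the network input $u\in\R$, integrated against $d\nu$, and $x,y$ index the weight coordinates), and (ii) estimating their $L^4(\Omega)$ norms and plugging into \eqref{mb1}.

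First I would set up the Malliavin structure: write $w_j^{(0)}=W(e_j^{(0)})$ and $w_j=W(e_j)$ for an orthonormal system $\{e_j^{(0)},e_j\}$ in $\fH$, so that $DW(e_j)=e_j$ etc. Then by the chain rule \eqref{c_chain}, differentiating \eqref{NN1},
\begin{align*}
D_x f_n(u) &= \frac{1}{\sqrt n}\sum_{j=1}^n\Big( \tau(u w_j^{(0)})\, e_j(x) + u\, w_j\,\tau'(u w_j^{(0)})\, e_j^{(0)}(x)\Big),\\
D^2_{x,y} f_n(u) &= \frac{1}{\sqrt n}\sum_{j=1}^n\Big( u\,\tau'(u w_j^{(0)})\big(e_j(x)e_j^{(0)}(y)+e_j^{(0)}(x)e_j(y)\big) + u^2 w_j\,\tau''(u w_j^{(0)})\, e_j^{(0)}(x)e_j^{(0)}(y)\Big).
\end{align*}
The key point is that the sum over $j$ of independent, identically distributed, mean-zero-ish terms produces the $1/\sqrt n$ gain exactly as in the finite-dimensional case: when one integrates $\|D_x f_n(u)\|_{L^4}$ against $d\mu(x)$ (i.e. sums the coordinate components and integrates the weight-variable), the orthogonality of the $e_j$'s together with the envelope bound \eqref{env_cond} on $\tau,\tau',\tau''$ and the moment condition \eqref{nu4ga} on $\nu$ (which controls $\int |u|^{2\gamma+4}\,\nu(du)$ after one uses $|u\tau'|,|u^2\tau''|\lesssim a+b|u|^{\gamma+2}$) will yield $\|D_\cdot f_n(u)\|$-type quantities that are $O(1)$ — with no $n$ blow-up — and the single explicit factor of $1/\sqrt n$ surviving from the $D^2$ term (which, being quadratic in the network and divided by $\sqrt n$, contributes $n\cdot n^{-1}\cdot n^{-1/2}=n^{-1/2}$ heuristically). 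Carrying this bookkeeping carefully through the three-fold weight-integral $\mu(dx)\mu(dy)\mu(dz)$ and the two-fold input-integral $\nu(dr_1)\nu(dr_2)$ in \eqref{mb1}, and then taking the square root (turning an $O(n^{-1})$ into an $O(n^{-1/2})$), gives \eqref{CLT_NN}.

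The main obstacle, and where the real work lies, is the combinatorial/analytic bookkeeping in estimating the quadruple integral: one must expand the products $D^2_{x,y}f_n(r_1)\cdot D^2_{z,y}f_n(r_1)$ and $D_x f_n(r_2)\cdot D_z f_n(r_2)$ into their $O(n^2)$ cross-terms, use the orthonormality $\langle e_j,e_k\rangle=\delta_{jk}$ to collapse the off-diagonal terms after integrating in $x,y,z$, and track precisely how many powers of $n$ come in and out so as to confirm the net rate is $n^{-1/2}$ and not worse. A secondary technical point is verifying that all the relevant $L^4(\Omega)$ moments are finite and uniformly bounded in $n$: this is exactly where \eqref{env_cond} and the moment hypothesis \eqref{nu4ga} enter — one needs $\mathbb E[|w_j|^4 |\tau''(uw_j^{(0)})|^4]<\infty$ and its integral against $|u|^8\nu(du)$ (or the appropriately reduced power coming from H\"older) to be finite, which \eqref{nu4ga} guarantees since $|u^2\tau''(uZ)|^4\lesssim |u|^8(a+b|uZ|^\gamma)^4$ has $\nu$-integrable envelope $|u|^{8+4\gamma}$ — note one uses the slightly stronger exponent $2\gamma+4$ in the statement to leave room for the H\"older splittings induced by the $\|\cdot\|_{L^4}$ norms in \eqref{mb1}. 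Once the moment finiteness and the $n$-power accounting are pinned down, assembling the constant $C=C(\nu,a,b,\gamma)$ and concluding is routine.
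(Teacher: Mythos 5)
Your overall strategy is the paper's: realize the weights as $W(e_j^{(0)}), W(e_j)$ for an isonormal process over an $L^2$ space, apply the crude bound \eqref{mb1} with $K=L^2(\R,d\nu)$ (the covariance term vanishing since $\mathcal C$ in \eqref{cov_fn} does not depend on $n$), compute the two Malliavin derivatives (your formulas are correct), and count powers of $n$. The genuine gap is in the step you describe as ``use the orthonormality $\langle e_j,e_k\rangle=\delta_{jk}$ to collapse the off-diagonal terms after integrating in $x,y,z$.'' The bound \eqref{mb1} takes the $L^4(\Omega)$ norms \emph{pointwise in the weight variables, before} integrating $\mu(dx)\mu(dy)\mu(dz)$; after that step each factor is dominated by $\tfrac{C}{\sqrt n}\sum_j\big(|e_j(\cdot)|+|e_j^{(0)}(\cdot)|\big)$ (times polynomial factors in $r$), and orthogonality is no longer available: the cross terms $\int|e_j(x)||e_k(x)|\,\mu(dx)$ with $j\neq k$ do not vanish for a generic orthonormal system (e.g.\ Hermite functions in your $L^2(\R,\mathbb P_\Theta)$), and summing $O(n^2)$ of them would destroy the rate. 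What is actually needed --- and what the paper uses --- is the freedom to \emph{choose} representing functions with pairwise disjoint supports: the paper takes $w_j=W(\ind_{[2j,2j+1)})$ and $w_j^{(0)}=W(\ind_{[2j+1,2j+2)})$ in $\fH=L^2(\R_+,dx)$, so that $\|D_xf_n(r)\|_{L^4}$ and $\|D^2_{x,y}f_n(r)\|_{L^4}$ are supported on $\bigcup_j[2j,2j+2)$ (resp.\ $[2j,2j+2)^2$), the shared variables $x,y,z$ force all four block indices to coincide, and the triple weight-integral contributes exactly $8n$, which against the $n^{-2}$ from the four normalizations gives $O(n^{-1})$ and hence $n^{-1/2}$ after the square root.

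A secondary inaccuracy: the claim that integrating $\|D_xf_n(r)\|_{L^4(\Omega)}$ against $\mu(dx)$ yields an $O(1)$ quantity is false (with unit-norm $e_j$'s it is $O(\sqrt n\,)$), and the heuristic ``$n\cdot n^{-1}\cdot n^{-1/2}$'' does not reflect the actual count; the accounting must be done globally on the quintuple integral as above. Your use of \eqref{env_cond} and \eqref{nu4ga} to control the $r$-dependence (powers up to $|r|^{2+\gamma}$, squared to $|r|^{2\gamma+4}$, integrable against the finite measure $\nu$) is correct and matches the paper.
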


\begin{remark}\rm
(i)
When $\nu$ is a finite measure with the condition \eqref{nu4ga},
 the condition \eqref{assume_nu} holds.
 In particular, the Lebesgue measure restricted on any finite interval satisfies  \eqref{nu4ga}.

\noi
(ii) In the one-dimensional case, the authors of \cite{BFF24} obtained the rate $n^{-1/2}$ in 
total-variation distance and $1$-Wasserstein distance, while 
the authors of \cite{FHMNP23} obtained the optimal rate $n^{-1}$
by combining a conditioning argument and Stein's method. 
It is possible to adapt their strategy to our functional approximation in $d_2$ metric
and improve the rate in \eqref{CLT_NN} to $n^{-1}$,
and we leave this problem for future research.

\noi
(iii) In a recent work \cite{CMSV23}, the authors derived similar quantitative
functional CLT for shallow neural network with spherical inputs by using chaos expansion of the activation 
function. Note that restricting the domain of the activation function $\tau$ to a compact manifold as the unit sphere $\mathbb  S^{d}$ is morally equivalent to taking $K=L^{2}(\mathbb  S^{d},dx)$ in our notation.

\end{remark}

\begin{proof}[Proof of Theorem  \ref{thm_NN}] In this proof, we let $C$ denote a  finite constant that depends on 
$(\nu, a, b,  \gamma)$
and may vary from line to line. For instance, we rewrite the condition \eqref{env_cond}
as 
\begin{align}\label{env1}
 \sum_{\ell=0}^2 \abs{ \frac{d^\ell}{dx^\ell} \tau(x) } \leq C( 1+ |x|^\gamma),
\end{align}
from which we get for $G\sim\NN(0,1)$:
\begin{align}\label{env2}
 \|  \tau(rG) \|_{L^4(\Omega)} 
   + \|  \tau'(rG) \|_{L^4(\Omega)}  
   + \|  \tau''(rG) \|_{L^4(\Omega)}  
\leq C( 1+ |r|^\gamma).
\end{align}

Let us now present the bulk of our proof. Note that $f_n(r)$ is a function of finitely many 
i.i.d. standard normals, and we can realize them as follows:
$$
w_j = W( \ind_{[2j, 2j+1)  }) 
\quad
{\rm and}
\quad
w^{(0)}_j = W( \ind_{[2j+1, 2j+2)  })
\quad\text{for $j\geq 1$},
$$
where $W$ is an isonormal process over $\fH = L^2(\R_+, dx)$. In this setting 
(with $S_{f_n} = S_B= \mathcal{C}$) 
the bound \eqref{mb1} 
 reduces to 
\begin{align} \label{d211}
\begin{aligned}
 d_2(f_n, B)  
&  \leq \frac{\sqrt{3}}{2} \bigg( \int_{\R^5} \| D^2_{x,y}f_n(r_1)\|_{L^4(\Omega)}
\| D^2_{z,y}f_n(r_1)\|_{L^4(\Omega)} \\
&\qquad\qquad\qquad\times \| D_{x}f_n(r_2)\|_{L^4(\Omega)}
\| D^2_{z}f_n(r_2)\|_{L^4(\Omega)}dxdydz \nu(dr_1) \nu(dr_2) \bigg)^{\frac12}.
\end{aligned}
\end{align}

First, it is easy to get  
\begin{align}\label{Dzfn0}
D_z f_n(r) = \frac{1}{\sqrt{n}} \sum_{j=1}^n
\Big[  \ind_{[2j, 2j+1)  }(z)  \tau(rw_j^{(0)})      + \ind_{[2j+1, 2j+2)  }(z)   rw_j \tau'(r w_j^{(0)}) \Big];
\end{align}

\noi
see also \eqref{c_chain}. Then, the fourth moment of $D_z f_n(r)$ is equal to 
$$
 \frac{1}{n^2} \sum_{j=1}^n
\Big[  \ind_{[2j, 2j+1)  }(z)  \E[  | \tau(rw_j^{(0)}) |^4 ]      + 3 r^4 \ind_{[2j+1, 2j+2)  }(z)    \E[ |  \tau'(r w_j^{(0)})  |^4 ] \Big],
$$
and thus,
\begin{align}  \label{Dzfn}
\begin{aligned} 
\|D_z f_n(r) \|_{L^4(\Omega)}  
& =  \frac{1}{\sqrt{n}} \sum_{j=1}^n
\Big[  \ind_{[2j, 2j+1)  }(z)  \| \tau(rw_j^{(0)}) \|_{L^4(\Omega)}    \\
 &\qquad\qquad + 3^{\frac14} |r|  \ind_{[2j+1, 2j+2)  }(z)    \|  \tau'(r w_j^{(0)})   \|_{L^4(\Omega)}^4  \Big] \\
 &\leq  \frac{C (1 + |r|^\gamma)}{\sqrt{n}} \sum_{j=1}^n
 \big[ \ind_{[2j, 2j+1)  }(z)  +   |r|  \ind_{[2j+1, 2j+2)  }(z)  \big] \\
 &\leq  \frac{C (1 + |r|^{1+\gamma})}{\sqrt{n}} \sum_{j=1}^n
  \ind_{[2j, 2j+2)  }(z).      
\end{aligned}
\end{align}

Similarly, we first get from \eqref{Dzfn0} that
\begin{align*}
D_x D_z f_n(r) 
&= \frac{1}{\sqrt{n}} \sum_{j=1}^n
\Big[  \ind_{[2j, 2j+1)  }(z)\ind_{[2j+1, 2j+2)  }(x)  r  \tau'(rw_j^{(0)})      \\
&\qquad\qquad  + \ind_{[2j+1, 2j+2)  }(z) \ind_{[2j+1, 2j+2)  }(x)   r^2 w_j \tau''(r w_j^{(0)})  \\
&\qquad\qquad  + \ind_{[2j, 2j+1)  }(x) \ind_{[2j+1, 2j+2)  }(z)  r \, \tau'(r w_j^{(0)}) \Big],
\end{align*}
and thus we can get

 \noi
\begin{align}  \label{Dxyfn}
\begin{aligned} 
\|D^2_{x,z} f_n(r) \|_{L^4(\Omega)}  
&\leq  \frac{C(1+|r|^\gamma)}{\sqrt{n}}  \sum_{j=1}^n
\Big[  \ind_{[2j, 2j+1)  }(z)\ind_{[2j+1, 2j+2)  }(x)  |r|     \\
&\qquad\qquad  + \ind_{[2j+1, 2j+2)  }(z) \ind_{[2j+1, 2j+2)  }(x)   r^2   \\
&\qquad\qquad  + \ind_{[2j, 2j+1)  }(x) \ind_{[2j+1, 2j+2)  }(z)   |r| \Big] \\
&\leq \frac{C(1+|r|^{2+\gamma})}{\sqrt{n}} 
\sum_{j=1}^n \ind_{[2j, 2j+2)  }(z) \ind_{[2j, 2j+2)  }(x). 
\end{aligned}
\end{align} 
Plugging the bounds \eqref{Dzfn}-\eqref{Dxyfn} into \eqref{d211}, 
we get 
\begin{align*}
&\int_{\R^5} \| D^2_{x,y}f_n(r_1)\|_{L^4(\Omega)}
\| D^2_{z,y}f_n(r_1)\|_{L^4(\Omega)} \| D_{x}f_n(r_2)\|_{L^4(\Omega)}
\| D^2_{z}f_n(r_2)\|_{L^4(\Omega)}dxdydz \nu(dr_1) \nu(dr_2)  \\
&\leq \frac{C}{n^2} \bigg( \int_{\R^2} (1+ |r_1|^{2+\gamma})^2 (1 + |r_2|^{1+\gamma})^2 \nu(dr_1) \nu(dr_2) \bigg)
 \int_{\R^3}\sum_{j=1}^n \ind_{\{x, y,z\in [2j, 2j+2)  \}}  dxdy dz  \\
 &\leq  C/n,
\end{align*}
where in the last step, we used the condition \eqref{nu4ga} for the finite measure $\nu$.
Hence the desired bound \eqref{CLT_NN} follows immediately. 
\qedhere

\end{proof}

\subsection{When $\fH$ is not  a $L^2$ space} \label{SEC32}
Motivated by recent active research on the spatial statistics over stochastic partial differential equations
(e.g., \cite{HNV20,  BNQSZ, NXZ22, BZ24, BZ25}), 
we consider the case where the Hilbert space $\fH$ is generated by 
the correlation kernel of a space-time Gaussian colored noise.
More precisely, let $\dot{W}$ be a space-time Gaussian noise on $\R_+\times\R^d$
with correlation structure formally given by

\noi
\begin{align}\label{ga01}
\E\big[ \dot{W}(t, x) \dot{W}(s, y)\big] = \gamma_0(t-s) \gamma_1(x-y),
\end{align}
where $\gamma_0, \gamma_1$ describe the temporal correlation and  spatial correlation respectively. 
When they are Dirac delta functions, $\dot{W}$ reduces to the space-time white noise.
In the following, we make a few assumptions on the correlation kernels:
\begin{itemize}

\item[(i)] $\gamma_0:\R\to[0,\infty]$ is  nonnegative-definite and  locally integrable;

\item[(ii)] $\gamma_1:\R^d\to[0,\infty]$ is the Fourier transform of some nonnegative 
tempered measure $\mu$ on $\R^d$ (i.e., $\gamma_1(x) = \mathcal{F}\mu(x) = \int_{\R^d} e^{-i x \cdot \xi} \mu(d\xi)$), satisfying Dalang's condition 
\begin{align}\label{DC}
\int_{\R^d} \frac{1}{ 1 + |\xi|^2} \mu(d\xi) < \infty.
\end{align}

\end{itemize}
The noise $\dot{W}$ with correlation structure \eqref{ga01} can be viewed 
as a formal derivative $\partial_t \partial^d_{x_1...x_d} W$ of a Gaussian process $\{W(\phi)\}_\phi $.
For $\phi,\psi \in C^\infty_c(\R_+\times\R^d)$, $W(\phi)$ and $W(\psi)$ are centered, jointly Gaussian
with 
\begin{align}\label{ga01b}
\begin{aligned}
\E\big[ W(\phi) W(\psi) \big] &=  \int_{\R_+^2\times\R^{2d}} \phi(t, x) \psi(s, y) \gamma_0(t-s) \gamma_1(x-y) dtdsdxdy \\
&=: \langle \phi, \psi \rangle_\fH,
\end{aligned}
\end{align}
which indeed defines an inner product $\jb{\bul, \bul}_\fH$. 
Let $\fH$ be the closure of 
$C^\infty_c(\R_+\times\R^d)$ under the above inner product. That is, $\fH$ is a real separable Hilbert space
and by density argument, we can obtain an isonormal Gaussian process $W=\{ W(\phi) : \phi\in \fH\}$ 
over $\fH$: $W$ is a centered Gaussian family with 
$\E\big[ W(\phi) W(\psi) \big]=\langle \phi, \psi \rangle_\fH$ for any $\phi, \psi\in\fH$.
See, e.g., \cite[Chapter 1]{Nua06}.
Starting from this isonormal Gaussian process $W$, we can go through the 
same construction of Hilbert-valued Wiener structure. 
Since the Hilbert space $\fH$ may contain some generalized functions, 
the Malliavin derivative $DF$ of a generic Malliavin-differentiable real random variable 
may not be a random function; however, this case would be excluded in the following applications. 

Now let us briefly describe the motivating example of the above Gaussian process.
Consider the following parabolic Anderson model (i.e., stochastic linear heat equation)
driven by a Gaussian colored noise with \eqref{ga01}:
\begin{align}\label{SHE}
\begin{cases}
 \partial_t u - \frac{1}{2} \Delta u = u \dot{W} \\
 u(0, x) = 1, \,\, \forall x\in\R^d,
\end{cases}
\end{align}
where $\Delta = \sum_{i=1}^d \partial^2_{x_i}$ denotes the Laplacian on the spatial variable. 
Under the above assumptions (i)-(ii) on the correlation kernels, 
the above equation admits a unique random field solution $\{ u(t,x): (t,x)\in\R_+\times\R^d\}$;
see \cite{HHNT15}. More precisely, letting 
$\mathcal{F}_t$ denote the $\sigma$-algebra generated by random variables  $W(\phi)$
with $\phi\in C^\infty_c(\R_+\times\R)$ such that $\phi(s,\bul) = 0$ for $s > t$,
then $\mathbf{F} : = \{ \mathcal{F}_t\}_{t\in\R_+}$ is a filtration generated by 
the noise $\dot{W}$. We say   $\{ u(t,x): (t,x)\in\R_+\times\R^d\}$
is a mild solution (or random field solution) to the equation \eqref{SHE}
if $u$ is $\mathbf{F}$-adapted, $u(t,x)\in L^2(\Omega)$ for every $(t,x)\in\R_+\times\R^d$,
and it holds almost surely that 
$$
u(t,x) = 1 + \int_0^t \int_{\R^d} p_{t-s}(x-y) u(s, y) W(ds, dy),
$$
where 
$
p_t(x) =  \frac{1}{(2\pi t)^{d/2}} e^{- \frac{|x|^2}{2t}}
$
is the heat kernel,
and
 the above stochastic integral is defined as a Skorohod integral
(i.e., the integrand  $(s, y) \mapsto p_{t-s}(x-y) u(s, y) \ind_{[0, t]}(s)$ is 
in the domain $\dom(\dl)$). Due to the linearity of the equation \eqref{SHE},
one can formally iterate the above integral equation and 
obtain a formal series, and the unique existence of the mild solution 
is equivalent to the $L^2(\Omega)$-convergence of the said series. 
Note also that due to the homogeneous nature of the Gaussian noise 
and the constant initial data, the solution is clearly spatially stationary.
Note that the first iteration would give us the term 
$
\int_0^t \int_{\R^d} p_{t-s}(x-y) W(ds, dy),
$
which is clearly a centered Gaussian random variable with variance
$$
\int_0^t \int_0^t \int_{\R^{2d}} p_{t-s}(x-y)p_{t-s'}(x-y') \gamma_0(s-s') \gamma_1(y-y')dsds' dydy',
$$
  whose finiteness is equivalent to Dalang's condition \eqref{DC} that can be seen by taking 
  the Fourier transform in the spatial variables. 
  In a recent work \cite{NXZ22}, the authors established a 
  quantitative central limit theorem for the spatial integral of the solution $u(t,x)$.
  Our aim for this subsection is to derive a suitable 
    version of the functional second-order Gaussian Poincar\'e inequality,
    with which we can provide a rate of convergence 
    for the corresponding functional central limit theorem. 
  To limit the length of this paper, we only consider a   case considered
  in the work  \cite{NXZ22} (i.e., $\gamma_1\in L^1(\R^d)$) and mention that 
  the same strategy should work easily for the stochastic wave equations
  in \cite{BNQSZ}.

  \smallskip

  Now let us state the abstract bound for the case where $\fH$ is generated 
  by the correlation structure \eqref{ga01}
  and $K = L^2(E, \mathscr{B}(E), \nu)$.

 \begin{theorem}\label{thm_imp2}
 Let $F = \{F(r): r\in E\}\in\mathbb{D}^{1,2}(K)$  have the covariance operator $S_F$  
  such that $\E[ \|DF\|^4_{\fH\otimes K}] <\infty$
 and $\E[ F] = 0$.
 Assume that for any $r\in E$,
  $D_{\bul}F(r)$ and $D^2_{\bul} F(r)$ are measurable functions on 
 $\R_+\times\R^d$ and   $(\R_+\times\R^d)^2$ respectively 
 such that $|DF(r)|\in\fH$ and $|D^2F(r)|\in\fH^{\otimes 2}$.
Let $Z$ be a centered Gaussian random variable on $K$ with covariance operator $S$.
Then, 
 
 \noi
\begin{align}\label{mb3}
\begin{aligned}
d_2(F, Z) &\leq  \frac{\sqrt{3}}{2} \sqrt{\mathcal{A}} 
 + \frac{1}{2} \| S_F - S_Z \|_{\rm HS},
\end{aligned}
\end{align}

\noi
where 
 
 \noi
\begin{align*}
\mathcal{A}:=&\int_{E^2} \nu(dr_1) \nu(dr_2) \int_{\R_+^6\times\R^{6d}} drdr' dsds' d\theta d\theta' dzdz' dydy' dwdw' 
 \\
&    \times \gamma_0(\theta-\theta') \gamma_0(s-s') \gamma_0(r-r')  \gamma_1(z-z') \gamma_1(w-w') \gamma_1(y-y') \\
&    \times \| D_{r,z}D_{\theta, w} F(r_1)\|_{L^4(\Omega)} 
\| D_{s,y}D_{\theta', w'} F(r_1)\|_{L^4(\Omega)}  
\| D_{r',z'} F(r_2)\|_{L^4(\Omega)}  \| D_{s',y'} F(r_2)\|_{L^4(\Omega)}.
\end{align*}

\end{theorem}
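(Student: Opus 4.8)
The strategy is to mirror the proof of Theorem~\ref{thm_imp1}, but now tracking the more delicate structure of the inner product $\langle\cdot,\cdot\rangle_\fH$ coming from the correlation kernels \eqref{ga01b} rather than an $L^2$ pairing. As before, assume without loss of generality that $S_F=S_Z=S$, so that by \eqref{MSBC}, \eqref{conv1}, \eqref{conv1b} we have
$d_2(F,Z)\le\tfrac12\big(\E\big[\|\langle DF,-DL^{-1}F\rangle_\fH-S\big\|_{K^{\otimes2}}^2\big]\big)^{1/2}$, and then apply the Gaussian Poincar\'e inequality \eqref{GPI} on $K^{\otimes2}$-valued random variables together with $\E[\langle DF,-DL^{-1}F\rangle_\fH]=S$ to get, as in \eqref{bound2a},
\begin{align*}
\E\big[\|\langle DF,-DL^{-1}F\rangle_\fH-S\|_{K^{\otimes2}}^2\big]
&\le 2\,\E\big[\|\langle D^2F,-DL^{-1}F\rangle_\fH\|_{\fH\otimes K^{\otimes2}}^2\big]\\
&\quad+2\,\E\big[\|\langle DF,-D^2L^{-1}F\rangle_\fH\|_{\fH\otimes K^{\otimes2}}^2\big].
\end{align*}
This reduces the problem to estimating these two terms; by the same argument used to pass from the $D^2L^{-1}$ term to the $DL^{-1}$ term in the proof of Theorem~\ref{thm_imp1} (using $-D^2L^{-1}F=\int_0^\infty e^{-2t}P_tD^2F\,dt$ and picking up a factor $\tfrac14$ instead of $1$), it suffices to bound the first term.

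**Main step: unfolding the $\fH$-norms.** For fixed $r_1,r_2\in E$ I would write, via Mehler's formula \eqref{rep4} and the representation $X^t=\Psi(W^t)$,
$$
\langle D^2F(r_1),-DL^{-1}F(r_2)\rangle_\fH=\int_0^\infty e^{-t}\,\E\Big(\big\langle D^2F(r_1),(DF(r_2))^t\big\rangle_\fH\,\big|\,W\Big)\,dt,
$$
which is a random element of $\fH\otimes K^{\otimes2}$. Applying Jensen's inequality (in $t$ and in the conditional expectation) and Minkowski's inequality, the squared $\fH\otimes K^{\otimes2}$-norm is controlled by
$$
\int_0^\infty e^{-t}\,dt\int_{E^2}\nu(dr_1)\nu(dr_2)\,\E\Big(\big\|\big\langle D^2F(r_1),(DF(r_2))^t\big\rangle_\fH\big\|_{\fH}^2\,\big|\,W\Big).
$$
Now I expand the inner $\fH$-pairing and the remaining $\fH$-norm using the explicit kernel \eqref{ga01b}: the pairing $\langle D^2F(r_1),(DF(r_2))^t\rangle_\fH$ over one $\fH$-slot introduces the kernels $\gamma_0(r-r')\gamma_1(z-z')$ (contracting the coordinates $(r',z')$ of $DF(r_2)$ against the first coordinate of $D^2F(r_1)$), and taking the $\fH$-norm of the result introduces $\gamma_0(\theta-\theta')\gamma_1(w-w')$ on the surviving coordinates of the two copies of $D^2F(r_1)$, plus one more pairing $\gamma_0(s-s')\gamma_1(y-y')$ when we square. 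Since all $\gamma_i\ge0$ and by hypothesis $|DF(r)|\in\fH$, $|D^2F(r)|\in\fH^{\otimes2}$, Tonelli's theorem lets me move absolute values inside and replace $D^2_{r,z}D_{\theta,w}F(r_1)$ etc.\ by $|D^2_{r,z}D_{\theta,w}F(r_1)|$ freely. After taking conditional expectations back out, using $\|P_tG\|_{L^2_\o}\le\|G\|_{L^2_\o}$ from \eqref{OU1} to absorb the $\int_0^\infty e^{-t}\,dt=1$, and applying Cauchy--Schwarz (as in $\|D_x F(r_2)D_z F(r_2)\|_{L^2}\le\|D_xF(r_2)\|_{L^4}\|D_zF(r_2)\|_{L^4}$ and similarly for the second-derivative factors), one arrives exactly at the quantity $\mathcal A$ in the statement, with the combinatorial factor $2+2\cdot\tfrac14=\tfrac52$ — wait, one must be slightly careful: the two terms contribute $2$ and $\tfrac12$, summing to $\tfrac52$, and after the final $\tfrac12$ and the square root this is $\tfrac{\sqrt{5/2}}{2}\le\tfrac{\sqrt3}{2}$, which is why the stated constant $\tfrac{\sqrt3}{2}$ (matching Theorem~\ref{thm_imp1}) is legitimate. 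Finally, the triangle inequality reinstates the $\tfrac12\|S_F-S_Z\|_{\rm HS}$ term when $S_F\ne S_Z$, giving \eqref{mb3}.

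**Main obstacle.** The genuinely delicate point is the bookkeeping of which $\fH$-coordinates get contracted by which kernel: unlike the $L^2$ case, where $\otimes_1$ is a clean integral against Lebesgue measure, here each contraction of a pair of coordinates produces a factor $\gamma_0(\cdot-\cdot)\gamma_1(\cdot-\cdot)$, and I must check that after expanding $\|\langle D^2F(r_1),(DF(r_2))^t\rangle_\fH\|_\fH^2$ one recovers precisely three such kernel-pairs acting on the six spatial and six temporal variables $r,r',s,s',\theta,\theta',z,z',y,y',w,w'$ in the pattern displayed in $\mathcal A$ — and that the nonnegativity of $\gamma_0,\gamma_1$ is what licenses pulling absolute values through every integral. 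The measurability and integrability hypotheses ($|DF(r)|\in\fH$, $|D^2F(r)|\in\fH^{\otimes2}$) are exactly what is needed for these applications of Tonelli's theorem to be valid; everything else is a routine repetition of the Mehler/Jensen/Cauchy--Schwarz chain from the proof of Theorem~\ref{thm_imp1}.
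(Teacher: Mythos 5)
Your proposal is correct and follows essentially the same route as the paper: reduce via \eqref{MSBC} and the Gaussian Poincar\'e inequality on $K^{\otimes 2}$ to the two terms $\langle D^2F,-DL^{-1}F\rangle_\fH$ and $\langle DF,-D^2L^{-1}F\rangle_\fH$, represent $-DL^{-1}F$ and $-D^2L^{-1}F$ via the semigroup, and unfold the $\fH$-contractions into the $\gamma_0\gamma_1$ kernel pattern of $\mathcal{A}$ using nonnegativity, the contraction property \eqref{OU1}, and Cauchy--Schwarz (the paper outsources exactly this kernel bookkeeping to \cite[Appendix A.2]{BNQSZ}, which you reproduce correctly). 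Your constant accounting ($2+2\cdot\tfrac14=\tfrac52\le 3$) is consistent with, and slightly sharper than, the paper's, so the stated $\tfrac{\sqrt3}{2}$ bound follows either way.
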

When $\gamma_0, \gamma_1$ are Dirac delta functions, $\fH$ reduces to the $L^2(\R_+\times\R^d)$
and the expression $\mathcal{A}$ has  a simple form as in \eqref{mb1}.

\begin{proof}[Proof of Theorem \ref{thm_imp2}] Without losing any generality, we assume $S_F = S$.
And we can begin the proof  with the same arguments as in \eqref{bound1},  \eqref{bound2a}, and \eqref{bound2b}:
$$
d_2(F, Z) \leq  \frac{1}{2} \sqrt{2\,  \E\[  \big\|  \jb{D^2F, -DL^{-1}F}_\fH
\big\|_{\fH \otimes K^{\otimes2}}^2\]
+2\, \E\[\big\|\jb{DF, -D^2L^{-1}F}_\fH \big\|_{\fH \otimes K^{\otimes2}}^2\]     }
$$
with 
\begin{align*}
 \big\|  \jb{D^2F, -DL^{-1}F}_\fH
\big\|_{\fH \otimes K^{\otimes2}}^2
&= \bigg\|  \int_0^\infty dt e^{-t}  \langle D^2F,  P_tDF \rangle_\fH \bigg\|_{\fH \otimes K^{\otimes2}}^2 \\
&\leq  \int_0^\infty dt e^{-t} \int_{E^2} \nu(dr_1) \nu(dr_2) \big\| \langle D^2F(r_1),  P_tDF(r_2) \rangle_\fH  \big\|^2_{\fH}.
\end{align*}

\noi
Then, using the same argument as in \cite[Appendix A.2, p.825]{BNQSZ} 
(with $F(r_1), F(r_2)$ in place of $F, G$ therein),
we have 

\noi
\begin{align*}
&\E\big[  \big\| \langle D^2F(r_1),  P_tDF(r_2) \rangle_\fH  \big\|^2_{\fH} \big] \\
&\quad \leq \mathbf{T}_{r_1, r_2} :=
\int_{\R_+^6\times\R^{6d}} drdr' dsds' d\theta d\theta' dzdz' dydy' dwdw' 
 \\
&\qquad   \times \gamma_0(\theta-\theta') \gamma_0(s-s') \gamma_0(r-r')  \gamma_1(z-z') \gamma_1(w-w') \gamma_1(y-y') \\
&  \qquad \times \| D_{r,z}D_{\theta, w} F(r_1)\|_{L^4(\Omega)} 
\| D_{s,y}D_{\theta', w'} F(r_1)\|_{L^4(\Omega)}  
\| D_{r',z'} F(r_2)\|_{L^4(\Omega)}  \| D_{s',y'} F(r_2)\|_{L^4(\Omega)} , 
 \end{align*}
which does not depend on $t$. Thus, we have
$$
\E\big[ \big\|  \jb{D^2F, -DL^{-1}F}_\fH
\big\|_{\fH \otimes K^{\otimes2}}^2 \big]
\leq  \int_{E^2} \nu(dr_1) \nu(dr_2) \mathbf{T}_{r_1, r_2}. 
$$
In the same way, we can obtain 
$$
\E\big[ \big\|  \jb{DF, -D^2L^{-1}F}_\fH
\big\|_{\fH \otimes K^{\otimes2}}^2 \big]
\leq  \frac{1}{2} \int_{E^2} \nu(dr_1) \nu(dr_2) \mathbf{T}_{r_1, r_2}. 
$$
Hence, we get \eqref{mb3}.
\qedhere

\end{proof}

  \noi
 $\bul$ {\bf Application $\III$: Spatial statistics of SPDEs.}  
In a pioneering  work \cite{HNV20},
 Huang, Nualart,
and Viitasaari initiated a research line of establishing 
 (quantitative) central limit theorems for stochastic partial differential 
 equations (SPDEs); more precisely in   \cite{HNV20},
 they considered one-dimensional stochastic nonlinear heat equations driven 
 by the space-time Gaussian white noise and established that 
 the spatial integrals of the solution 
 \begin{align}\label{FRT}
 F_R(t) := \int_{|x|\leq R} [u(t,x)-1]dx
 \end{align}
 
 \noi
 admit
 Gaussian fluctuation as $R\to\infty$.
 Their results were partially motivated by the study of mathematical intermittence 
 of the parabolic Anderson model, and are closely related 
 to the spatial ergodicity of the corresponding  SPDEs (see, e.g., \cite{CKNP21, NZ20ecp}).
 We refer interested readers to  \cite[page 4]{BHWXY25} for a 
 summary of results in this direction. 
 In these references,  the corresponding
  functional central limit theorems are established without any rate of convergence. 
  In the following, we will present the first quantitative functional central limit theorem
  in this research direction as an illustration of our main bound in Theorem \ref{thm_imp2}.
Here,  we consider the parabolic Anderson model \eqref{SHE}
 with space-time colored noise $\dot{W}$ as in \eqref{ga01} 
 and constant initial condition
 such that the temporal, spatial correlation kernels $\gamma_0,\gamma_1$ satisfy 
 the above assumptions (i)-(ii).  Under these assumptions, 
 we record below a useful bound from \cite[Theorem 3.1]{NXZ22}:
 for any $p\in[2,\infty)$ and for almost every $(s,y), (s_1,y_1), (s_2, y_2)\in[0,t]\times\R^d$,
 we have
 \begin{align}\label{bdd_NXZ}
 \begin{aligned}
 \| D_{s,y} u(t,x) \|_{L^p(\Omega)} & \leq C(t)  p_{t-s}(x-y), \\
  \| D_{s_1,y_1}D_{s_2, y_2} u(t,x) \|_{L^p(\Omega)} &
  \leq   C(t)  \big[ p_{t-s_1}(x-y_1) p_{s_1-s_2}(y_1-y_2) \ind_{\{s_1 > s_2 \}}  \\
&\qquad\quad  + p_{t-s_2}(x-y_2) p_{s_2-s_1}(y_2-y_1) \ind_{\{s_2 > s_1 \}} \big] ,
 \end{aligned} 
 \end{align} 
 
 \noi
 where the constant $C(t)$ depends on $(t,p, \gamma_0,\gamma_1)$ and is increasing in $t$. 
 Looking into the proof of \cite[Theorem 3.1]{NXZ22},
 one can choose $C(t) = a e^{bt}$ for some positive constants $a, b$
 that do not depend on $t$. 
Now it is clear that the random variable $F_R(t)$, defined in \eqref{FRT},
 satisfies the assumptions in Theorem \ref{thm_imp2}.
 
 \smallskip
 
 Now we are ready to state our result on the quantitative Gaussian process
 approximation of the process $\{F_R(t): t\in[0,T]\}$ for any finite $T > 0$.
 
 \begin{theorem} \label{APP_SPDE}
 Let the above assumptions prevail.
 Suppose that $\gamma_0$ is nontrivial, i.e.,
 $\int_0^a\int_0^a \gamma_0(t-t')dtdt' >0$ for all $a > 0$.
 Then, for any   $T\in(0,\infty)$,
 $F_R=\{F_R(t): t\in[0,T]\}$ is a random element in $K:= L^2([0,T], dx)$.
 Assume that $0 <   \| \gamma_1\|_{L^1(\R^d)} <\infty$. Then, we have 
 
 \noi
\begin{align}\label{bdd_361}
d_2( R^{-\frac{d}{2}}F_R, \mathcal{G}_{R} ) \leq  C R^{-\frac{d}{2}},
\end{align}
 
 \noi
 where $\mathcal{G}_{R}$ is a centered Gaussian process with the same covariance operator as $R^{-\frac{d}{2}}F_R$.
Let $\mathcal{G}$  
 be a continuous centered Gaussian process with covariance structure 
 \begin{align}\label{cov_G1}
\mathcal{C}_{\infty}(t,s):= \E\big[ \mathcal{G}(t)  \mathcal{G}(s) \big] =  w_d \int_{\R^d} {\rm Cov}(  u(t, z), u(s, 0) ) dz
 \end{align}
 
 \noi
 with $w_d$ the volume of a unit ball in $\R^d$ and $(s, t)\in[0,T]^2$. Then,
  $R^{-\frac{d}{2}}F_R$ converges to $\mathcal{G}$ in $d_2$ distance as $R\to\infty$.
 
%

 \end{theorem}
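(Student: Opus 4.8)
The plan is to apply Theorem~\ref{thm_imp2} to the \emph{centered} random element $F:=R^{-d/2}F_R$ of $K=L^2([0,T],dt)$ — centered because $\E[u(t,x)]=1$ for the equation \eqref{SHE} — with $Z=\mathcal G_R$, and then to reach $\mathcal G$ by a triangle-inequality argument. First I would record the elementary facts. Joint measurability of $(\omega,t)\mapsto F_R(t)(\omega)$ is inherited from that of $u$; by spatial stationarity and the moment bounds behind \eqref{bdd_NXZ} one has $\sup_{(t,x)\in[0,T]\times\R^d}\|u(t,x)-1\|_{L^2(\Omega)}<\infty$, so $\E\int_0^T F_R(t)^2\,dt\le C(T)R^{2d}<\infty$ and $F_R$ is indeed a random element of $K$. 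Since $u(t,x)\in\mathbb D^{2,2}$ for every $(t,x)$, with Malliavin derivatives $D_{\bul}u(t,x)$ and $D^2_{\bul}u(t,x)$ given by genuine functions on $\R_+\times\R^d$ and $(\R_+\times\R^d)^2$ respectively (see \cite{NXZ22}), the same holds for $F_R(t)=\int_{|x|\le R}(u(t,x)-1)\,dx$, with $D_{s,y}F_R(t)=\int_{|x|\le R}D_{s,y}u(t,x)\,dx$ and $D_{s_1,y_1}D_{s_2,y_2}F_R(t)=\int_{|x|\le R}D_{s_1,y_1}D_{s_2,y_2}u(t,x)\,dx$; the bounds \eqref{bdd_NXZ}, together with $\gamma_1\in L^1(\R^d)$ and local integrability of $\gamma_0$, then yield $F\in\mathbb D^{1,2}(K)$ with $\E[\|DF\|_{\fH\otimes K}^4]<\infty$ and all the measurability required by Theorem~\ref{thm_imp2}. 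As $S_F=S_{\mathcal G_R}$ by the choice of $\mathcal G_R$, Theorem~\ref{thm_imp2} reduces \eqref{bdd_361} to showing $\mathcal A\le CR^{-d}$.

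\textbf{The core estimate $\mathcal A\le CR^{-d}$.} I would dominate the two single Malliavin derivatives in $\mathcal A$ by the first line of \eqref{bdd_NXZ} and the two iterated ones by the second line; for instance, using $C(t)\le C(T)$ on $[0,T]$,
\begin{align*}
\|D_{r,z}D_{\theta,w}F(r_1)\|_{L^4(\Omega)} &\le C(T)R^{-d/2}\int_{|x_1|\le R}\big[p_{r_1-r}(x_1-z)p_{r-\theta}(z-w)\ind_{r>\theta} \\
&\qquad\qquad\qquad +p_{r_1-\theta}(x_1-w)p_{\theta-r}(w-z)\ind_{\theta>r}\big]\,dx_1 .
\end{align*}
Expanding the two iterated-derivative factors produces $2\times2=4$ cases. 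In each case I would integrate the spatial variables first, with the time variables frozen: the ten spatial variables $x_1,x_2,x_3,x_4,z,z',w,w',y,y'$ form a tree — a caterpillar whose spine runs through $z',z,w,w',y,y'$ and terminates in two of the $x_i$'s, with the other two $x_i$'s as pendant leaves — and whose edges are exactly the spatial heat kernels and the three factors $\gamma_1(\cdot-\cdot)$. Integrating leaf by leaf toward one distinguished ball variable $x_\ast$, each internal heat-kernel edge contributes $\int_{\R^d}p_\tau=1$, each $\gamma_1$-edge contributes $\|\gamma_1\|_{L^1(\R^d)}$, each of the other three ball integrals contributes $\le\int_{\R^d}p_\tau=1$, and the last step contributes $\int_{|x_\ast|\le R}dx_\ast=w_dR^d$; so the spatial integral is at most $C\|\gamma_1\|_{L^1(\R^d)}^3\,w_dR^d$, a bound independent of the time variables. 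The remaining temporal integral, $\int_{[0,T]^2}dr_1dr_2\int\gamma_0(\theta-\theta')\gamma_0(s-s')\gamma_0(r-r')(\cdots)$, is then $\le T^2\big(\int_0^T\!\!\int_0^T\gamma_0(a-b)\,da\,db\big)^3<\infty$, using local integrability of $\gamma_0$ and the fact that adaptedness confines all time variables to $(0,T)$. Collecting the prefactor $R^{-2d}$ coming from the four factors $R^{-d/2}$ in $F=R^{-d/2}F_R$, this gives $\mathcal A\le C(T)R^{-d}$, hence $d_2(R^{-d/2}F_R,\mathcal G_R)\le\tfrac{\sqrt3}{2}\sqrt{\mathcal A}\le CR^{-d/2}$, i.e.\ \eqref{bdd_361}.

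\textbf{Passage to the limit.} For the last claim I would write $d_2(R^{-d/2}F_R,\mathcal G)\le d_2(R^{-d/2}F_R,\mathcal G_R)+d_2(\mathcal G_R,\mathcal G)$; the first term vanishes as $R\to\infty$ by \eqref{bdd_361}, and since $\mathcal G_R$ and $\mathcal G$ are centered Gaussian on $K$, the last assertion of Theorem~\ref{thm_imp1} (equivalently, a direct Gaussian interpolation, as in the proof of Proposition~\ref{prop:BC}) gives $d_2(\mathcal G_R,\mathcal G)\le\tfrac12\|S_{\mathcal G_R}-S_{\mathcal G}\|_{\rm HS}=\tfrac12\|\mathcal C_R-\mathcal C_\infty\|_{L^2([0,T]^2)}$, where $\mathcal C_R(t,s)=R^{-d}\E[F_R(t)F_R(s)]$ and $\mathcal C_\infty$ is the kernel in \eqref{cov_G1}. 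By spatial stationarity $\mathcal C_R(t,s)=R^{-d}\int_{\R^d}\Cov\big(u(t,v),u(s,0)\big)\,|B_R\cap(B_R+v)|\,dv$; since $R^{-d}|B_R\cap(B_R+v)|\to w_d$ for each $v$ and stays $\le w_d$, while $v\mapsto\Cov(u(t,v),u(s,0))$ is integrable uniformly over $(t,s)\in[0,T]^2$ (again by $\gamma_1\in L^1(\R^d)$; cf.\ \cite{NXZ22}), dominated convergence gives $\mathcal C_R(t,s)\to\mathcal C_\infty(t,s)$ pointwise together with $\sup_R\sup_{(t,s)\in[0,T]^2}|\mathcal C_R(t,s)|<\infty$; a further dominated-convergence argument on the finite-measure space $[0,T]^2$ then gives $\|\mathcal C_R-\mathcal C_\infty\|_{L^2([0,T]^2)}\to0$. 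Hence $d_2(R^{-d/2}F_R,\mathcal G)\to0$.

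\textbf{Main difficulty.} I expect the estimate $\mathcal A\le CR^{-d}$ to be the only real obstacle: one must recognize the caterpillar-tree structure of the ten spatial variables and check that, in each of the four cases created by the indicators in \eqref{bdd_NXZ}, leaf-by-leaf integration leaves exactly one uncancelled ball integral — this single factor $R^d$ is precisely what converts the $R^{-2d}$ normalization into the rate $R^{-d/2}$. Verifying the hypotheses of Theorem~\ref{thm_imp2} and the covariance convergence are routine once the moment and derivative bounds of \cite{NXZ22} are invoked.
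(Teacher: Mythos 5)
Your proposal is correct and follows essentially the same route as the paper: verify the hypotheses of Theorem~\ref{thm_imp2} from the moment bounds \eqref{bdd_NXZ}, bound $\mathcal A$ by integrating the spatial variables one at a time so that exactly one ball integral survives and produces the single factor $w_dR^d$ (the paper carries this out in the order $dx_1,dx_2,dx_4,dy',dy,dw',dw,dz,dz',dx_3$, matching your leaf-by-leaf caterpillar argument), and then conclude via the triangle inequality and dominated convergence of $\mathcal C_R\to\mathcal C_\infty$ in $L^2([0,T]^2)$. The only cosmetic discrepancy is the combinatorial constant in front of $\mathcal A^\ast_{r_1,r_2}$ ($16$ in the paper versus your $4$), which is immaterial to the rate.
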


\begin{proof}

We know from \cite{HHNT15} that the solution   to \eqref{SHE}
uniquely exists with 
$$
\sup_{t\leq T} \sup_{x\in\R^d} \E[ u(t,x)^2 ] <\infty.
$$
As a result, we can see easily that  $\E\big[ \| F_R \|_{K}^2 \big] <\infty$, and in particular,
we have $F_R\in K$ almost surely. 
Note
that the covariance structure \eqref{cov_G1}
is the limiting covariance structure of the process   $R^{-\frac{d}{2}}F_R$
and
actually coincides with the one in \cite[Theorem 1.6]{NZ20ejp} (see equation (1.13) therein).
To see, we write 
\begin{align*}
\mathcal{C}_{R}(t,s):&=R^{-d}  \E[ F_R(t)F_R(s)]\\
&=R^{-d}  \int_{|x|\leq R} \int_{|y| \leq R} \Cov(u(t, x), u(s, y) )dxdy \\
&= R^{-d}  \int_{|x|\leq R} \int_{|y| \leq R}  \Cov(u(t, x-y), u(s, 0) )dxdy 
\quad\text{by  stationarity;} \\
&=\omega_d   \int_{\R^d}   \Cov(u(t, z), u(s, 0) )   \frac{ {\rm Vol}\{x: |x|\leq R, |x-z|\leq R\} }{{\rm Vol}\{x: |x|\leq R\} }   dz \\
&\xrightarrow{R\to\infty} \omega_d   \int_{\R^d}   \Cov(u(t, z), u(s, 0) )      dz,
\end{align*}
provided 
\begin{align}\label{cond_G1}
\int_{\R^d}   | \Cov(u(t, z), u(s, 0) ) | dz <\infty
\end{align}
due to dominated convergence.  Indeed, the condition \eqref{cond_G1} can be verified as follows. 
Using $\Cov(u(t, z), u(s, 0) )  = \E[ \langle D u(t,z), -DL^{-1} u(s, 0) \rangle_\fH ]$
and the representation \eqref{rep4},\footnote{in particular its consequence 
$\| -D_{r, y} L^{-1}F \|_{L^p(\Omega)} \leq \| D_{r, y} F\|_{L^p(\Omega)} $
for any real-valued Malliavin differentiable random variable $F$ such that
$\| D_{r, y} F\|_{L^p(\Omega)}$ is finite.}
we deduce from \eqref{bdd_NXZ} that
\begin{align*}
&  | \Cov(u(t, z), u(s, 0) ) |  \\
 & \leq \E\bigg[  \int_{\R_+^2}  \int_{\R^{2d}} \gamma_0(r-r') 
  \gamma_1(y-y') | D_{r, y} u(t,z) |  | D_{r', y'}L^{-1} u(s,0) |  drdr' dy dy' \bigg]\\
  &\leq  \int_{\R_+^2}  \int_{\R^{2d}} \gamma_0(r-r')   
  \gamma_1(y-y') \| D_{r, y} u(t,z) \|_{L^2(\Omega)}   \| D_{r', y'} u(s,0) \|_{L^2(\Omega)}  drdr' dy dy'  \\
  &\leq C(t) C(s)  \int_{0}^t \int_0^s drdr' \gamma_0(r-r') \int_{\R^d}  p_{t-r}(z-y) p_{s-r'}(y') \gamma_1(y-y')dydy',
\end{align*}
which is clearly integrable in $z$ over $\R^d$ using $\gamma_1\in L^1(\R^d)$.
That is, the condition \eqref{cond_G1} is verified.  Using exactly the same argument via dominated convergence, 
one can easily see that 
\begin{align}\label{cov_lim1}
\lim_{R\to+\infty} \int_{[0,T]^2} \big| \mathcal{C}_{\infty}(t,s) - \mathcal{C}_{R}(t,s) \big|^2 dtds =0. 
\end{align}

\noi
That is, $\mathcal{C}_{R}$ converges to $\mathcal{C}_{\infty}$ in Hilbert-Schmidt norm and 
thus $\mathcal{G}_{R}$ converges to $\mathcal{G}$ in the $d_2$ metric, as $R\to+\infty$.
It remains to prove the bound \eqref{bdd_361} in order to conclude our proof.
Now let us apply our bound \eqref{mb3} for $K$-valued random variable $F_R$:
$$
d_2( R^{-\frac{d}{2}}F_R, \mathcal{G}_R ) \leq \frac{\sqrt{3}}{2} \sqrt{\mathcal{A}}
$$
with 
\begin{align*}
\mathcal{A}
&= R^{-2d}  \int_{[0,T]^2}dr_1 dr_2 \int_{\R_+^6 \times\R^{6d}}
drdr' dsds' d\theta d\theta' dzdz' dydy' dwdw' 
 \\
&    \times \gamma_0(\theta-\theta') \gamma_0(s-s') \gamma_0(r-r')  \gamma_1(z-z') \gamma_1(w-w') \gamma_1(y-y') \\
&    \times \| D_{r,z}D_{\theta, w} F_R(r_1)\|_{L^4(\Omega)} 
\| D_{s,y}D_{\theta', w'} F_R(r_1)\|_{L^4(\Omega)}  
\| D_{r',z'} F_R(r_2)\|_{L^4(\Omega)}  \| D_{s',y'} F_R(r_2)\|_{L^4(\Omega)}.
\end{align*}
Then, we can proceed as in Section 3.1.1 in \cite{NXZ22} and obtain that 
\begin{align}  \label{forRM1}
\mathcal{A} \leq 16 R^{-2d}\int_{[0,T]^2}dr_1 dr_2  C(r_1)^2 C(r_2)^2  \mathcal{A}^\ast_{r_1, r_2} 
\end{align}
with $\mathcal{A}^\ast_{r_1, r_2}$ given in the same form as the term $\mathcal{A}^\ast$ in 
 \cite[Section 3.1.1]{NXZ22}:
 
 \noi
\begin{align*}
\mathcal{A}^\ast_{r_1, r_2}
:&=  \int_{[0,r_1]^4} drdsd\theta d\theta' \int_{[0, r_2]^2}    \int_{\R^{6d}} dr' ds  dzdz' dydy' dwdw' \gamma_0(r-r') \gamma_0(s-s') \gamma_0(\theta - \theta') \\
& \quad   \times \int_{B_R^4} d\bx_4 p_{r_1-r}(x_1-z) p_{r-\theta}(z-w)  p_{r_1-s}(x_2-y) p_{s-\theta'}(y-w')  p_{r_2-r'}(x_3- z') \\
&  \qquad \times     p_{r_2-s'}(x_4 - y') \gamma_1(w-w')  \gamma_1(y-y')  \gamma_1(z-z'),
  \end{align*}

\noi
where $B_R:=\{ |x|\leq R\}$, $d\pmb{x}_4= dx_1dx_2dx_3dx_4$,
and
  $16$ is the total number of combinations of $r>\theta$ or not, 
$s>\theta'$ or not, $r>\theta$ or not, $s>\theta'$ or not. 
Then,  integrating with respect to $dx_1$, $dx_2$, $dx_4$, $dy'$, $dy$, $dw'$, $dw$, $dz$, $dz'$, $dx_3$ one by one and  using the local integrability of $\gamma_0$,
we can get 

 \noi
\begin{align}   \label{forRM2}
\mathcal{A}^\ast_{r_1, r_2}
\leq \omega_d R^d \|\gamma_1\|_{L^1(\R^d)}^3   \bigg( 2\int_0^{\max\{r_1, r_2\} } \gamma_0(v)dv \bigg)^3,
  \end{align}
from which we get $\mathcal{A}\leq C R^{-d}$ for some finite constant $C$.
Hence the bound \eqref{bdd_361} follows immediately. \qedhere

\end{proof}

  \begin{remark}\rm
  In Theorem \ref{APP_SPDE}, we considered the integral process over a finite interval $[0, T]$.
  In fact, it is possible to consider $F_R\in L^2(\R_+, d\nu)$ with $\nu$ satisfying 
  certain integrability  at infinity.  Using $C(r) = a e^{br}$ with $a, b>0$
  and using also the bound \eqref{forRM2},
  we have (instead of \eqref{forRM1}),
 \begin{align*}
\mathcal{A}
&\leq 32 R^{-2d}   \int_{0}^\infty \nu(dr_2) \int_0^{r_2} \nu(dr_1)   C(r_1)^2 C(r_2)^2  \mathcal{A}^\ast_{r_1, r_2}  \\
&\leq 32 R^{-2d}  a^4  \int_{0}^\infty \nu(dr_2)  \int_0^{r_2} \nu(dr_1)    e^{2b(r_1+r_2)} \omega_d R^d \|\gamma_1\|_{L^1(\R^d)}^3   \bigg( 2\int_0^{r_2 } \gamma_0(v)dv \bigg)^3.
 \end{align*}
 For example, taking $\nu(dr) = \psi(r) dr$ with $\psi(r) \leq e^{-3b r}$ such that 
 $$
 \int_0^\infty \nu(dr) e^{2br}  \bigg( \int_0^r \gamma_0(v)dv \bigg)^3 <\infty,
 $$
we get $\mathcal{A}\leq CR^{-d}$, i.e., the bound \eqref{bdd_361} holds 
for    $K = L^2(\R_+, d\nu)$ in place of     $K= L^2([0, T], dt)$.
Under the same conditions, the limit \eqref{cov_lim1}
can be replaced by 
\begin{align}\notag
\lim_{R\to+\infty} \int_{\R_+^2} \big| \mathcal{C}_{\infty}(t,s) - \mathcal{C}_{R}(t,s) \big|^2 \nu(dt) \nu(ds) =0,
\end{align}
and thus we have $\{F_R(t): t\in\R_+\}$ converges to $\mathcal{G}$ with respect to the $d_2$ distance 
on $L^2(\R_+, d\nu)$, as $R\to+\infty$.
 
  \end{remark}

  \bigskip

  \ackno{\rm 
While writing this work at the University of Naples ``Federico II'', A.V. has been supported by the co-financing of the European 
Union--FSE--REACT--EU, PON Research
and Innovation 2014-2020, DM 1062/2021. A.V. is a member of INdAM-GNAMPA.
The authors are grateful to Domenico Marinucci for his enlightening discussions at the very early stage of this work.
G.Z. also  thanks Domenico Marinucci for his great hospitality during a visit at Tor Vergata in 2023 October, when this work was initiated.}

\end{document}